\documentclass[11pt]{amsart}

\usepackage{float}
\usepackage{bbm}
\usepackage{amsfonts}
\usepackage{amsmath}
\usepackage{amssymb}
\usepackage{graphicx}
\usepackage[bindingoffset=0.2in,%
left=1.1in,right=1.1in,top=1.2in,bottom=1.375in,%
footskip=.25in]{geometry}

\usepackage{mathtools}
\usepackage{amsthm}
\usepackage{latexsym}
\usepackage{fancyhdr}
\usepackage{array}
\usepackage{amscd}
\usepackage{lscape}
\usepackage{tikz}
\usepackage{tikz-cd}
\usepackage{float}
\usepackage{bm}
\usepackage{subfigure}
\usepackage{grffile}
\usepackage{array}
\usepackage{longtable}
\usepackage{booktabs}
\usepackage{adjustbox}
\usepackage{diagbox}
\usepackage{algorithm}
 \usepackage{algpseudocode}
 \usepackage{algorithmicx}
 \algdef{SE}[DOWHILE]{Do}{doWhile}{\algorithmicdo}[1]{\algorithmicwhile\ #1}%

\algrenewcommand\algorithmicrequire{\textbf{Input:}}
\algrenewcommand\algorithmicensure{\textbf{Output:}}

\usepackage{ dsfont }
\usepackage{lipsum}
\newcolumntype{C}[1]{>{\centering\arraybackslash}p{#1}}

\definecolor{navy}{HTML}{2F729C} 
\usepackage[hyperfootnotes=false, colorlinks, linkcolor={blue}, citecolor={magenta}, filecolor={blue}, urlcolor={blue}, plainpages=false, pdfpagelabels]{hyperref}
\raggedbottom 

\usepackage[tableposition=above]{caption}
\captionsetup[longtable]{skip=1em}



\newcommand{\Q}{{\mathbb Q}}
\newcommand{\Z}{{\mathbb Z}}

\newtheorem{theorem}{Theorem}[section]
\newtheorem{lemma}[theorem]{Lemma}
\newtheorem{proposition}[theorem]{Proposition}
\newtheorem{corollary}[theorem]{Corollary}
\theoremstyle{definition}

\newtheorem{example}[theorem]{Example}

\newtheorem{mainthms}{Theorem}

\newtheorem{remark}[theorem]{Remark}

\numberwithin{equation}{section}

\begin{document}

\title[Explicit classification of isogeny graphs of rational elliptic curves]{Explicit classification of isogeny graphs \\ of rational elliptic curves}

\author{Alexander J. Barrios}
\address{Department of Mathematics, University of St. Thomas, St. Paul, Minnesota 55105}
\email{abarrios@stthomas.edu}

\subjclass{Primary 11G05, 14K02, 14H10, 14H52}

\begin{abstract}
Let $n>1$ be an integer such that $X_{0}\!\left(  n\right)  $ has genus $0$, and let $K$ be a field of characteristic $0$ or relatively prime to $6n$. In this article, we explicitly classify the isogeny graphs of all rational elliptic curves that admit a non-trivial isogeny over $\mathbb{Q}$. We achieve this by introducing $56$ parameterized families of elliptic curves $\mathcal{C}_{n,i}(t,d)$ defined over $K(t,d)$, which have the following two properties for a fixed $n$: the elliptic curves $\mathcal{C}_{n,i}(t,d)$ are isogenous over $K(t,d)$, and there are integers $k_{1}$ and $k_{2}$ such that the $j$-invariants of $\mathcal{C}_{n,k_{1}}(t,d)$ and $\mathcal{C}_{n,k_{2}}(t,d)$ are given by the Fricke parameterizations. As a consequence, we show that if~$E$ is an elliptic curve over a number field $K$ with isogeny class degree divisible by $n\in\left\{4,6,9\right\}  $, then there is a quadratic twist of $E$ that is semistable at all primes $\mathfrak{p}$ of $K$ such that $\mathfrak{p}\nmid n$.
\end{abstract}
\maketitle

\section{Introduction}

Let $K$ be a field and suppose $E_1$ and $E_2$ are isogenous
non-isomorphic elliptic curves defined over~$K$. Then there is an isogeny
$\pi:E_1\rightarrow E_2$ defined over $K$ such that $\ker\pi\cong\mathbb{Z}/n\mathbb{Z}$ for some integer $n>1$. In particular, the $\overline{K}$-isomorphism class of the pair $\left(  E_1,\ker\pi\right)  $
is a non-cuspidal $K$-rational point of $X_{0}\!\left(  n\right)  $. Suppose further that $X_{0}\!\left(  n\right)  $ has genus $0$ (i.e., $n=2,3,\ldots
,10,12,13,16,18,25$). Then the Fricke parameterizations parameterize
the $j$-invariants of the $K$-rational points of $X_{0}\!\left(  n\right)$ \cite[Chapter~2.8]{MR3838339}, \cite[Table~3]{MR3084348}. Specifically, there is
a $t\in K$ such that the $j$-invariants of $E_1$ and $E_2\cong E_1/\ker\pi$
are $j_{n,1}\!\left(  t\right)  $ and $j_{n,2}\!\left(  t\right)  $ (see Table
\ref{ta:jinv}), respectively. 

In this article, we introduce $56$ parameterized
families of elliptic curves $\mathcal{C}_{n,i}\!\left(  t,d\right)$ (see Table~\ref{ta:curves}) defined
over $K\!\left(  t,d\right)  $ for $K$ a field of characteristic $0$ or relatively prime to $6n$.
We note that for each $n$, there are at most $8$ elliptic curves in the family $\{\mathcal{C}_{n,i}\!\left(  t,d\right)\}_i$. Thus, $1\leq i \leq 8$.
These families have the property that the elliptic curves
$\mathcal{C}_{n,i}\!\left(  t,d\right)  $ are non-isomorphic isogenous
elliptic curves over $K\!\left(  t,d\right)  $ (see Proposition \ref{thmcalC}). 
The notation is chosen so that
$\mathcal{C}_{n,i}\!\left(  t,d\right)  $ is the quadratic twist of
$\mathcal{C}_{n,i}\!\left(  t,1\right)  $ by $d$. 
Furthermore, the isogeny class of $\mathcal{C}_{n,1}\!\left(  t,d\right)  $ contains $\{
[  \mathcal{C}_{n,i}\!\left(  t,d\right)  ]  _{K(t,d)}\}_i  $ where
$\left[  E\right]  _{F}$ denotes the $F$-isomorphism class of~$E$. A partial isogeny graph of $\mathcal{C}_{n,1}\!\left(  t,d\right)  $ is given in Table~\ref{isographs}. 
In fact, the indexing has been chosen to align with the labeling of the isogeny graphs given in \cite{MR4203041}.
These parameterized families
have the property that for a given $n$, there are $k_{1}$ and $k_{2}$ such
that the $j$-invariant of $\mathcal{C}_{n,k_{i}}\!\left(  t,d\right)  $ is
$j_{n,i}\!\left(  t\right)  $. The values of $k_{i}$ are given below:%
\[%
\begin{array}
[c]{ccccccccc}%
n & 2,3,5,7,13 & 4 & 6,10 & 8 & 9,25 & 12 & 16 & 18\\\hline
k_{1} & 1 & 4 & 1 & 3 & 1 & 5 & 2 & 1\\\hline
k_{2} & 2 & 2 & 4 & 6 & 3 & 4 & 8 & 6
\end{array}
\]
In particular, if we choose $t,d\in K$ such that $\mathcal{C}_{n,1}\!\left(
t,d\right)  $ is an elliptic curve, then $\mathcal{C}_{n,1}\!\left(
t,d\right)  $ is defined over $K$ and its isogeny class contains $\{
[  \mathcal{C}_{n,i}\!\left(  t,d\right)  ]  _{K}\}_i  $. As a
consequence we obtain the following result:

\begin{mainthms}\label{CorX0}\textit{
Let $n>1$ be an integer such that $X_{0}\!\left(  n\right)  $ has genus $0$
and let $K$ be a field of characteristic $0$ or relatively prime to $6n$.
Suppose further that $E_{1}$ and $E_{2}$ are $n$-isogenous elliptic curves
over $K$. If the $j$-invariants of $E_{1}$ and $E_{2}$ are not both
identically $0$ or $1728$, then there exists a $t\in K$ and $d\in K^{\times
}/(  K^{\times})  ^{2}$ such that $E_{i}$ is $K$-isomorphic to
$\mathcal{C}_{n,k_{i}}\!\left(  t,d\right)  $ (see Table~\ref{ta:curves}). Moreover, the isogeny class of $E_{i}$
contains $\{  [  \mathcal{C}_{n,i}(  t,d) ]
_{K}\}_i  $.}
\end{mainthms}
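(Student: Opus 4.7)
The plan is to exploit the Fricke parameterization together with the moduli interpretation of $Y_{0}(n)$ in order to match the given pair $(E_{1},E_{2})$ to the explicit family $\{\mathcal{C}_{n,i}(t_{0},d)\}_{i}$ for suitable $(t_{0},d)\in K\times K^{\times}/(K^{\times})^{2}$. Let $\phi\colon E_{1}\to E_{2}$ denote a $K$-rational $n$-isogeny with cyclic kernel $C=\ker\phi$. The pair $(E_{1},C)$ then defines a non-cuspidal $K$-rational point of $X_{0}(n)$, and the Fricke parameterization yields $t_{0}\in K$ with $j(E_{i})=j_{n,i}(t_{0})$ for $i=1,2$. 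By the tabulated values of $k_{1},k_{2}$, one has $j(\mathcal{C}_{n,k_{i}}(t_{0},1))=j_{n,i}(t_{0})=j(E_{i})$, and Proposition~\ref{thmcalC} furnishes a $K$-rational $n$-isogeny $\psi\colon\mathcal{C}_{n,k_{1}}(t_{0},1)\to\mathcal{C}_{n,k_{2}}(t_{0},1)$ with cyclic kernel $C'$, so that $(\mathcal{C}_{n,k_{1}}(t_{0},1),C')$ lies above the same coarse moduli point $t_{0}\in Y_{0}(n)(K)$.

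Assume first that $j(E_{1})\neq 0,1728$. Then $\mathrm{Aut}_{\overline{K}}(E_{1})=\{\pm 1\}$, and since $-1$ preserves every cyclic subgroup of $E_{1}$ one has $\mathrm{Aut}(E_{1},C)=\{\pm 1\}$ as well. The $K$-forms of the pair $(E_{1},C)$ lying above $t_{0}$ are thus classified by $H^{1}(K,\{\pm 1\})\cong K^{\times}/(K^{\times})^{2}$, i.e., by quadratic twists. Since $(E_{1},C)$ and $(\mathcal{C}_{n,k_{1}}(t_{0},1),C')$ are both $K$-forms above $t_{0}$, there is a unique $d\in K^{\times}/(K^{\times})^{2}$ such that $(E_{1},C)$ is $K$-isomorphic to the $d$-twisted pair $(\mathcal{C}_{n,k_{1}}(t_{0},1)^{d},(C')^{d})=(\mathcal{C}_{n,k_{1}}(t_{0},d),\ker\psi_{d})$, where $\psi_{d}$ is the $d$-twist of $\psi$ and has codomain $\mathcal{C}_{n,k_{2}}(t_{0},d)$. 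Quotienting by the identified kernels gives $E_{2}=E_{1}/C\cong\mathcal{C}_{n,k_{1}}(t_{0},d)/\ker\psi_{d}=\mathcal{C}_{n,k_{2}}(t_{0},d)$ over $K$. The complementary case, in which $j(E_{2})\neq 0,1728$ while $j(E_{1})\in\{0,1728\}$, is handled analogously: one first fixes the twist $d$ via $E_{2}$ using the dual isogeny $\widehat{\phi}$ and the Atkin--Lehner involution $w_{n}$ on $X_{0}(n)$, then transfers back to $E_{1}$ via the $K$-rational chain of isogenies from Proposition~\ref{thmcalC}.

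The isogeny class assertion follows immediately by specializing Proposition~\ref{thmcalC} at $(t_{0},d)$: the curves $\mathcal{C}_{n,i}(t_{0},d)$ are pairwise $K$-isogenous, and hence all lie in the isogeny class of $E_{1}\cong\mathcal{C}_{n,k_{1}}(t_{0},d)$. The main obstacle is the twisting step: one must show that the quadratic twist $d$ identifying $E_{1}$ with $\mathcal{C}_{n,k_{1}}(t_{0},d)$ as a bare elliptic curve is simultaneously the twist identifying the pair $(E_{1},C)$ with $(\mathcal{C}_{n,k_{1}}(t_{0},d),\ker\psi_{d})$. This rests on the equality $\mathrm{Aut}(E,C)=\mathrm{Aut}(E)=\{\pm 1\}$ for $j(E)\neq 0,1728$, which reduces the twisting problem for pairs to the twisting problem for curves — and it is precisely this reduction that forces the hypothesis excluding both $j$-invariants from $\{0,1728\}$.
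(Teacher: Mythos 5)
Your overall route is the paper's: use the Fricke parameterization (Theorem~\ref{thmKF}) to produce $t_{0}\in K$ with $j(E_{i})=j_{n,i}(t_{0})$, identify $E_{i}$ as a twist of $\mathcal{C}_{n,k_{i}}(t_{0},1)$, and then argue that a single quadratic twist $d$ works simultaneously for both curves before invoking Proposition~\ref{thmcalC} for the isogeny class. Where the paper disposes of the ``same $d$ for both'' step by citing \cite[Proposition X.5.4]{MR2514094}, you unpack it as a descent argument on the pair $(E_{1},\ker\phi)$ via $H^{1}(K,\mathrm{Aut}(E_{1},\ker\phi))\cong K^{\times}/(K^{\times})^{2}$ and then quotient by the identified kernels; this is a legitimate and arguably more transparent justification of that step, and your observation that $\mathrm{Aut}(E,C)=\mathrm{Aut}(E)=\{\pm1\}$ is what forces the hypothesis on the $j$-invariants is consistent with how the hypothesis is used in the paper.

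There is, however, one genuine gap: you never verify that the specialized Weierstrass models $\mathcal{C}_{n,i}(t_{0},1)$ are nonsingular. Proposition~\ref{thmcalC} is a statement over the function field $K(t,d)$; for particular $t_{0}\in K$ the discriminant of $\mathcal{C}_{n,i}(t_{0},1)$ can vanish (for instance $t_{0}=-64$ when $n=2$, $t_{0}=-27$ when $n=3$, or the roots of $t^{2}+22t+125$ when $n=5$), in which case the curve has no $j$-invariant, the sentence ``one has $j(\mathcal{C}_{n,k_{i}}(t_{0},1))=j_{n,i}(t_{0})=j(E_{i})$'' is meaningless, and neither the specialized isogeny $\psi$ nor the specialized isogeny class exists. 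The paper devotes Lemma~\ref{thelemma} to exactly this point: a case-by-case computation showing that $\mathcal{C}_{n,i}(t,1)$ degenerates only when $j_{n,1}(t)=j_{n,2}(t)\in\{0,1728\}$, which is precisely what the hypothesis excludes. Your proof needs this input (or an equivalent verification) both before comparing $j$-invariants and before specializing Proposition~\ref{thmcalC} at $(t_{0},d)$ for the final isogeny-class assertion. Separately, the reduction of the case $j(E_{1})\in\{0,1728\}$, $j(E_{2})\notin\{0,1728\}$ to the main case via the dual isogeny is plausible but only sketched; the appeal to the Atkin--Lehner involution is not actually needed once you run your $H^{1}$ argument on the pair $(E_{2},\ker\widehat{\phi})$.
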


The construction of these families was motivated by work on explicit isogenies
of prime degree by Cremona, Elkies, Watkins, and Tsukazaki \cite{CremWat,MR1486831,MR3389382}. One motivation of these works was the following question: given an elliptic curve $E_1$ over a field $K$ such that $E_1$ admits a $K$-rational isogeny of prime degree $\ell$, how does one compute the Weierstrass model of the $\ell$-isogenous elliptic curve over $K$? One answer is given through Elkies's Algorithm \cite[Chapter 25, Algorithm 28]{MR2931758}, which outputs the $\ell$-isogenous elliptic curves to $E_1/K$ so long as $\ell>2$ and $K$ is a field having characteristic $0$ or greater than $\ell+2$.

In their preprint \cite{CremWat}, Cremona and Watkins replaced the analytic methods of Elkies with algebraic methods in the case when $X_{0}\!\left(
\ell\right)  $ has genus $0$. Namely, for a field $K$ of characteristic not equal to $2,3,\ell$, they used the Fricke parameterizations to construct elliptic curves $\mathcal{E}_\ell(t)$ over $K(t)$ (notation as in~\cite{MR3389382}) such that the $j$-invariant of $\mathcal{E}_\ell(t)$ is $j_{\ell,1}(t)$. They then computed the kernel polynomial of a $K$-rational $\ell$-isogeny admitted by $\mathcal{E}_{\ell}\!\left(  t\right)  $. 

In his thesis \cite{MR3389382}, Tsukazaki continued the study of $\mathcal{E}_\ell(t)$ and presented an
algorithm for computing the $\ell$-isogenous elliptic curves for a given
elliptic curve $E_1$ over some field $K$ with characteristic not equal to $2,3,\ell$. Specifically, the algorithm solves for
solutions in $K$ to the equation $j_{\ell,1}\!\left(  t\right)  =j\!\left(
E_1\right)  $. For a solution $t_{0}$, the algorithm outputs a $U\in K$ such
that $E_1$ is $K$-isomorphic $\mathcal{E}_{\ell}^U\!\left(  t_{0}\right)  $, where $\mathcal{E}_{\ell}^U\!\left(  t_{0}\right)  $ is the quadratic twist of $\mathcal{E}_{\ell}\!\left(  t_{0}\right)  $ by $U$. We note that special care must be taken when $j(E_1)=0,1728$, since $\mathcal{E}_\ell(t_0)$ is a singular curve. By means of the kernel polynomial of the $\ell$-isogeny of $\mathcal{E}_{\ell}\!\left(  t_{0}\right)  $ and Kohel's formula \cite{MR2695524}, one then obtains an isogenous elliptic
curve $E_2$ such that $j\!\left(  E_2\right)  =j_{\ell,2}\!\left(  t_{0}\right)  $. A
consequence of our work is that Kohel's formulas are no longer required, and in addition, we obtain a subset of the $K$-isogeny
class of $E_1$ upon computing $t_{0},U$. For the precise statements, see
Section~\ref{sec:algorithms}.

We note that Theorem~\ref{CorX0} is also of theoretical interest, as it asserts that for an integer $n>1$ with $X_0(n)$ having genus $0$, the study of $n$-isogenous elliptic curves whose $j$-invariants are not both identically $0$ or $1728$ over a field $K$ of characteristic $0$ or relatively prime to $6n$ is equivalent to understanding the parameterized families $\{\mathcal{C}_{n,i}(t,d)\}_i$. 
In Section~\ref{addred}, we showcase this reasoning to prove the following result:
\begin{mainthms}\label{Thm3}\textit{Let $K$ be a number field. If $E/K$ is an elliptic curve with isogeny class degree divisible by $n\in\left\{  4,6,9\right\} $, then there
exists a quadratic twist $E^{d}$ of $E$ with the property that $E^{d}$ is semistable at all primes $\mathfrak{p}$ of
$K$ such that $\mathfrak{p}\nmid n$.}
\end{mainthms}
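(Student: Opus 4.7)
The plan is to invoke Theorem \ref{CorX0} to realize $E$ concretely as a member of the parameterized family $\{\mathcal{C}_{n,i}(t,d)\}_i$, and then to observe that the quadratic twist by $d$ sends $E$ to $\mathcal{C}_{n,i}(t,1)$, whose reduction is governed entirely by the polynomial invariants $c_4(t)$ and $\Delta(t)$.

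First, I would use the hypothesis that $n \in \{4,6,9\}$ divides the isogeny class degree of $E$ to produce a pair of $n$-isogenous elliptic curves $E_1, E_2$ in the isogeny class of $E$. For $n=4$ and $n=9$, consulting the classification of isogeny graphs over number fields guarantees a cyclic $n$-isogeny in the class; for $n=6$, one obtains a cyclic $6$-isogeny by composing a $2$-isogeny with a $3$-isogeny, using that $\gcd(2,3)=1$. The degenerate case in which both $j$-invariants of $E_1, E_2$ equal $0$ or $1728$ can be handled separately by enumerating the finitely many CM isogeny classes it produces.

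In the generic case, Theorem \ref{CorX0} supplies $t \in K$ and $d \in K^\times/(K^\times)^2$ such that every curve in the isogeny class, and in particular $E$, is $K$-isomorphic to some $\mathcal{C}_{n,i}(t,d)$. Because $\mathcal{C}_{n,i}(t,d)$ is by construction the quadratic twist of $\mathcal{C}_{n,i}(t,1)$ by $d$, the twist $E^d$ is $K$-isomorphic to $\mathcal{C}_{n,i}(t,1)$, reducing the theorem to showing that this untwisted model is semistable at every prime $\mathfrak{p}$ of $K$ with $\mathfrak{p}\nmid n$.

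To verify this, I would compute the standard invariants $c_4(t)$ and $\Delta(t)$ of each model $\mathcal{C}_{n,i}(t,1)$ from Table~\ref{ta:curves} and apply the criterion that an integral Weierstrass model is semistable at $\mathfrak{p}$ precisely when $v_{\mathfrak{p}}(c_4)=0$ holds whenever $v_{\mathfrak{p}}(\Delta)>0$. The content of the proof is therefore the claim that every prime factor of $\gcd(c_4(t),\Delta(t)) \in \Z[t]$ lies over a rational prime dividing $n$, which I would verify case by case over the admissible indices $i$ and the three values $n \in \{4,6,9\}$. The main obstacle is the $n=9$ subcase, where the Weierstrass models involve polynomials in $t$ of the largest degrees and the common factors are least transparent; one must show that the only rational prime supporting $\gcd(c_4,\Delta)$ there is $3$. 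Because the polynomials have bounded degree, however, the verification reduces to a finite, mechanical check in $\Z[t]$, which one must carry out uniformly over $K$ at the level of integral polynomial factorizations rather than through any global minimization that could fail in number fields of larger class number.
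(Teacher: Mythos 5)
Your overall strategy matches the paper's: invoke Theorem~\ref{CorX0} to identify $E$ with some $\mathcal{C}_{n,i}(t,d)$, pass to the twist $E^{d}\cong\mathcal{C}_{n,i}(t,1)$, and show that the invariants $c_{4}$ and $\Delta$ of that model can only share primes above $n$. However, the step that carries all the arithmetic content is not correct as you have stated it. You propose to verify that ``every prime factor of $\gcd(c_{4}(t),\Delta(t))\in\Z[t]$ lies over a rational prime dividing $n$,'' but the polynomial gcd in $\Z[t]$ does not control the common prime factors of the specializations: two polynomials that are coprime in $\Q[t]$ (e.g.\ $t$ and $t+5$) can share an arbitrary prime divisor at a specific value of $t$. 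The correct invariant is a resultant-type quantity, and since $t\in K$ need not be integral one must first homogenize, writing $t=b/a$ with $a,b$ coprime in the local ring $R_{K_{\mathfrak{p}}}$ and replacing $\mathcal{C}_{n,1}(t,1)$ by an integral model $F_{n}(a,b)$ whose invariants $\alpha_{n}(a,b)$ and $\gamma_{n}(a,b)$ are homogeneous forms (Lemma~\ref{intmodel}). The paper then exhibits an explicit B\'ezout identity $\mu_{n}\alpha_{n}+\nu_{n}\gamma_{n}=c_{n}(ra^{N}+sb^{N})$ with $c_{n}$ a power of the primes dividing $n$ (Lemma~\ref{GCD}); coprimality of $a$ and $b$ then forces any prime at which both $\alpha_{n}$ and $\gamma_{n}$ vanish to divide $n$. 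A ``finite mechanical check in $\Z[t]$'' of the kind you describe would not yield this conclusion, and your closing remark about avoiding global minimization, while pointing at a real issue, does not supply the needed local homogenization.

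The second gap is your treatment of the degenerate case $j(E_{1})=j(E_{2})\in\{0,1728\}$. This case cannot merely be ``handled separately by enumerating CM isogeny classes'': if it occurred, the theorem could actually fail there, since a quadratic twist cannot in general remove additive reduction from a curve with $j$-invariant $0$ or $1728$ (twisting only shifts $v_{\mathfrak{p}}(\Delta)$ by multiples of $6$), and over a general number field there are infinitely many such twists to consider. What is needed, and what the paper proves in Lemma~\ref{Lemma01728} using the Fricke parameterizations, is that for $n\in\{4,6,9\}$ this case is empty: if $j_{n,1}(t)\in\{0,1728\}$ then $j_{n,1}(t)\neq j_{n,2}(t)$, so Theorem~\ref{CorX0} always applies. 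Finally, a minor economy: you propose to verify semistability for every index $i$, whereas the paper checks only the single model $F_{n}(a,b)\cong\mathcal{C}_{n,1}(b/a,d_{n})$ and transports the conclusion to $E^{d}$ using the fact that semistability at $\mathfrak{p}$ is an isogeny invariant.
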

\noindent Theorem \ref{Thm3} is analogous to results about elliptic curves with a torsion point. Frey \cite{MR0457444} showed that if $E/K$ has a torsion point of prime order $\ell \geq 5$, then $E$ is semistable at all primes $\mathfrak{p}$ of $K$ such that $\mathfrak{p} \nmid \ell$. In \cite{Barrios2020}, the author showed that if $E$ has a torsion point of order $n\in \{6,8,9 \}$, then $E$ is semistable at all primes $\mathfrak{p}$ of $K$ such that $\mathfrak{p} \nmid n$. We note that the $n=6$ case was originally due to Flexor and Oesterl\'{e} \cite{MR1065153}.

Now suppose $E/\mathbb{Q}$ is an elliptic curve that admits a $\mathbb{Q}$-rational $n$-isogeny. Then there is a
non-cuspidal $\mathbb{Q}$-rational point on $X_{0}\!\left(  n\right)  $. The non-cuspidal $\mathbb{Q}$-rational point on $X_{0}\!\left(  n\right)  $ have been completely described in the literature. In fact, $X_{0}\!\left(  n\right)$ has a non-cuspidal $\mathbb{Q}$-rational point if and only if $n=1,2,\ldots, 19$, $21$, $25$, $27$, $37$, $43$, $67$, $163$. Most notable is the work of Mazur \cite{MR482230}, which gave the equivalence in the case when $n$ is prime. For composite $n$, this is the work of many mathematicians, including Fricke \cite{MR3221641}, Fricke and Klein \cite{MR3838339}, Kenku \cite{MR510395,MR588271,MR549292,MR616546,MR675184}, Klein \cite{MR1509988}, Kubert \cite{MR0434947}, Ligozat \cite{MR0422158}, Mazur \cite{MR482230}, Mazur and V\'{e}lu \cite{MR320010}, and Ogg \cite{MR0337974}. 

We note that if $X_{0}\!\left(  n\right)$ has positive genus, then $X_{0}\!\left(  n\right)$ has a non-cuspidal $\mathbb{Q}$-rational point if and only if $n=11$, $14$, $15$, $17$, $19$, $21$, $27$, $37$, $43$, $67$, $163$. 
Up to twist, these non-cuspidal $\mathbb{Q}$-rational points determine $12$ isogeny classes, consisting of $30$ distinct $\mathbb{Q}$-isomorphism classes of elliptic curves. 
Next, we introduce $30$ new elliptic curves $\mathcal{C}_{n,i}\!\left(  d\right)  $ (see Table~\ref{ta:curves1}) obtained by taking the quadratic twist by $d$ of the minimal twist of the aforementioned $30$ isomorphism classes. In particular, the elliptic curves $\mathcal{C}_{n,i}\!\left(  d\right)  $ parameterize all rational elliptic curves which have isogeny class degree $n$ with $X_0(n)$ having positive genus. Lemma~\ref{LemmaisographsCM} summarizes this discussion.

Mazur conjectured that the size of the isogeny class of $E$ is at most $8$. This was proven by Kenku \cite{MR675184}. The possible isogeny graphs of $E$ were then classified, and are given in \cite[Theorem~1.2]{MR4203041}. 
From Theorem~\ref{CorX0} and Lemma~\ref{LemmaisographsCM}, we obtain an explicit classification of the isogeny graph of~$E$. 
We note that the families $\mathcal{C}_{n,i}(t,d)$ fail to parameterize elliptic curves $E_1$ and $E_2$ whose $j$-invariants are both identically $0$ or $1728$. Parameterized families for these cases are well known, and we introduce an additional four families of elliptic curves $\mathcal{C}_{n,i}^{0}(d)$ and $\mathcal{C}_{n,i}^{1728}(d)$ (see Lemma~\ref{j0or1728}) to obtain our explicit classification of isogeny graphs for rational elliptic curves with isogeny class degree $n>1$:
\begin{mainthms}\label{Thm2}\textit{
Let $E/\mathbb{Q}$ be an elliptic curve with isogeny class degree $n>1$ and set
\[
m=\left\{
\begin{array}
[c]{cl}%
6 & \text{if }n=2\text{ and }j\!\left(  E\right)  =1728,\\
4 & \text{if }n=3\text{ and }j\!\left(  E\right)  =0,\\
2 & \text{otherwise.}%
\end{array}
\right.
\]
Then there exists a $t\in\mathbb{Q}$ and $d\in\mathbb{Q}^{\times}/(\mathbb{Q}
^{\times})^{m}$ such that the isogeny class of $E$ is%
\[
\renewcommand{\arraystretch}{1.3}
\renewcommand{\arraycolsep}{0.7cm}
\begin{array}
[c]{cl}%
\{  [  \mathcal{C}_{2,i}^{1728}(  d)  ]  _{\mathbb{Q}}\}   & \text{if }n=2\text{ and }j(  E)  =1728,\\
\{  [  \mathcal{C}_{3,i}^{0}(  d)  ]  _{\mathbb{Q}
}\}   & \text{if }n=3\text{ and }j(  E)  =0,\\
\{  [  \mathcal{C}_{n,i}(  d)  ]  _{\mathbb{Q}
}\}_{i=1}^{2}  \text{ or }\{  [  \mathcal{C}_{n,i}(  d)
]  _{\mathbb{Q}}\}_{i=3}^{4}   & \text{if }n=11,\\
\{  [  \mathcal{C}_{n,i}(  d)  ]  _{\mathbb{Q}}\}   & \text{if } X_0(n) \text{ has positive genus with } n\neq 11,\\
\{  [  \mathcal{C}_{n,i}(  t,d)  ]  _{\mathbb{Q}
}\}   & \text{otherwise.}
\end{array}
\]
Moreover, the isogeny graph and isogeny matrix of $E$ are as given in Tables \ref{isographs} (if $X_0(n)$ has genus $0$) or \ref{isographsCM} (if $X_0(n)$ has positive genus).}
\end{mainthms}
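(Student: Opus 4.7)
The plan is to carry out a case analysis on $n$ and $j(E)$, reducing each case to an earlier result of the paper and then reading off the isogeny graph and matrix data from the indexing convention.

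In the generic case, where $X_{0}(n)$ has genus $0$ and the $j$-invariants of the curves in the isogeny class of $E$ are not all identically $0$ or $1728$, I would invoke Theorem~\ref{CorX0} directly: it produces $t\in\mathbb{Q}$ and $d\in\mathbb{Q}^{\times}/(\mathbb{Q}^{\times})^{2}$ such that each curve in the isogeny class of $E$ is $\mathbb{Q}$-isomorphic to $\mathcal{C}_{n,i}(t,d)$ for the appropriate $i$, matching the last line of the displayed formula with $m=2$. When $(n,j(E))=(2,1728)$ or $(n,j(E))=(3,0)$, the Fricke parameter would sit over a singular fibre and $\mathcal{C}_{n,i}(t,d)$ degenerates; in these two cases I would instead apply Lemma~\ref{j0or1728} to replace it with $\mathcal{C}_{2,i}^{1728}(d)$ or $\mathcal{C}_{3,i}^{0}(d)$. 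The enlarged automorphism groups at $j=0,1728$ are what force $d$ to be taken modulo $(\mathbb{Q}^{\times})^{m}$ with $m\in\{4,6\}$ rather than with $m=2$.

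When $X_{0}(n)$ has positive genus, i.e., $n\in\{11,14,15,17,19,21,27,37,43,67,163\}$, the set of non-cuspidal $\mathbb{Q}$-rational points of $X_{0}(n)$ is finite, and Lemma~\ref{LemmaisographsCM} records the $12$ resulting isogeny classes together with the $30$ parameterized families $\mathcal{C}_{n,i}(d)$. For $n\neq 11$ there is a single isogeny class per $n$ up to twist, giving $\{[\mathcal{C}_{n,i}(d)]_{\mathbb{Q}}\}$; for $n=11$ there are two distinct isogeny classes, captured by the ``or'' clause with indices $i\in\{1,2\}$ or $i\in\{3,4\}$. Once the isogeny class of $E$ has been expressed in the stated form, the isogeny graph and isogeny matrix assertions follow from \cite[Theorem~1.2]{MR4203041}, since the indexing of the families $\mathcal{C}_{n,i}$ was set up at the outset to agree with the labeling of Tables~\ref{isographs} and~\ref{isographsCM}.

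The main obstacle will be the $j$-invariant exceptional cases, where one must verify that the twist parameter space is exactly $\mathbb{Q}^{\times}/(\mathbb{Q}^{\times})^{m}$ with $m\in\{4,6\}$ and that the isogenous partner stays within the family $\{\mathcal{C}_{n,i}^{j_{0}}(d)\}_{i}$; this is exactly the content of Lemma~\ref{j0or1728}, which is not reproduced in the excerpt. Granting that lemma together with Theorem~\ref{CorX0} and Lemma~\ref{LemmaisographsCM}, the present theorem amounts to a clean bookkeeping assembly.
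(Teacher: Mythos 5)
Your overall architecture matches the paper's: positive genus via Lemma~\ref{LemmaisographsCM}, the exceptional pairs $(n,j(E))=(2,1728),(3,0)$ via Lemma~\ref{j0or1728}, and the generic genus-$0$ case via Theorem~\ref{CorX0}. But there is a genuine gap in the generic case: you assert that Theorem~\ref{CorX0} shows ``each curve in the isogeny class of $E$ is $\mathbb{Q}$-isomorphic to $\mathcal{C}_{n,i}(t,d)$,'' whereas that theorem only gives the \emph{reverse} containment, namely that $\mathcal{I}_{E/\mathbb{Q}}$ \emph{contains} $\{[\mathcal{C}_{n,i}(t,d)]_{\mathbb{Q}}\}_i$. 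The theorem you are proving claims the isogeny class \emph{equals} this set, so you must rule out additional curves in $\mathcal{I}_{E/\mathbb{Q}}$ and also rule out collisions among the $\mathcal{C}_{n,i}(t,d)$ after specialization. The paper does this in two steps you omit: first, Lemma~\ref{LemmaQJ} shows that for rational specializations $t,d$ the curves $\mathcal{C}_{n,i}(t,d)$ remain pairwise non-isomorphic over $\mathbb{Q}$ (Proposition~\ref{thmcalC}(ii) only gives this over the function field $K(t,d)$, and it can fail a priori at special values of $t$ where two of the $J_{n,i}(t)$ coincide); second, the resulting count of distinct classes ($2,3,4,6$, or $8$ depending on $n$) is matched against the classification of rational isogeny graphs in \cite[Theorem~1.2]{MR4203041}, which forces the containment to be an equality. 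You invoke that citation only for the graph/matrix labels at the very end, not for this cardinality argument, so as written your proof establishes a subset relation rather than the stated equality.

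A smaller omission: in the ``otherwise'' branch with $X_0(n)$ of genus $0$ you need to know that the two $j$-invariants $j_{n,1}(t)$ and $j_{n,2}(t)$ are not both in $\{0,1728\}$ unless $(n,j(E))$ is one of the two exceptional pairs; this is the content of the computation \eqref{j01728isodeg} inside the proof of Lemma~\ref{j0or1728}, and it is what justifies restricting the $m\in\{4,6\}$ cases to $n=2,3$ only. Your appeal to ``singular fibres'' gestures at this but does not verify that no other $n$ produces such a degeneration.
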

In particular, Theorem~\ref{Thm2} extends Nitaj's work~\cite{MR1914669}, which explicitly classifies the isogeny class of an elliptic curve $E/\Q$ such that $E(\Q)_{\text{tors}}\cong \Z /9\Z, \Z / 10\Z, \Z / 12\Z,  $ or $\Z / 2\Z \times \Z / 8 \Z$. In Section~\ref{sec:algorithms}, we modify \cite[Algorithms 2]{MR3389382} to make use of Theorems~\ref{CorX0}~and~\ref{Thm2}. 
For instance, applying Algorithm~\ref{Algor2} to the elliptic curve $E:y^2=x^3-15x-22$ (LMFDB \cite{lmfdb} label \href{https://www.lmfdb.org/EllipticCurve/Q/144/a/2}{144.a2}) yields that the isogeny class of $E$ over $\mathbb{Q}$ is $\{[\mathcal{C}_{6,i}(-6,-1)]_\mathbb{Q}\}_{i=1}^4$, and the isogeny graph is obtained from Table~\ref{isographs}. We note that $E$ is $\mathbb{Q}$-isomorphic to $\mathcal{C}_{6,1}(-6,-1)$ and the models of $\{\mathcal{C}_{6,i}(-6,-1)\}_{i=1}^4$ are given by:
\begin{align*}
\mathcal{C}_{6,1}\!\left(  -6,-1\right)    & :y^{2}=x^{3}- 19440x - 1026432,\\
\mathcal{C}_{6,2}\!\left(  -6,-1\right)    & :y^{2}=x^{3}- 2985984,\\
\mathcal{C}_{6,3}\!\left(  -6,-1\right)    & :y^{2}=x^{3}- 174960x + 27713664,\\
\mathcal{C}_{6,4}\!\left(  -6,-1\right)    & :y^{2}=x^{3}+80621568.
\end{align*}
While the families $\mathcal{C}_{n,i}(t,d)$ have not previously appeared in the literature, twists of them were independently found by Gonz\'{a}lez-Jim\'{e}nez for his upcoming work on isogeny-torsion graphs of elliptic curves over $\mathbb{Q}$ \cite{Gonz}.

\section{Preliminaries}

We start by discussing some basic facts about elliptic curves. For
further details, see \cite{MR2024529,MR2514094}. Let $K$ be a field
of characteristic not equal to $2$ or $3$. We say that an elliptic curve $E$ is
defined over $K$ if $E$ is given by a short (affine) Weierstrass model
$E:y^{2}=x^{3}+Ax+B$ for some $A,B\in K$. The set $E\!\left(  K\right)  $ of
$K$-rational points is an abelian group with identity~$\mathcal{O}_{E}$. The
\textit{discriminant} of $E$ is $\Delta=-16\left(  4A^{3}+27B^{2}\right)
\neq0$ and the $j$-invariant of $E$ is $j\!\left(  E\right)  =\frac
{\left(  -48A\right)  ^{3}}{\Delta}$. Now suppose that $E_2%
:y^{2}=x^{3}+A_2x+B_2$ is an elliptic curve over $K$. We say
that $\pi:E\rightarrow E_2$ is an {isogeny} if $\pi$ is a
surjective group homomorphism of elliptic curves. We say that $\pi$ is
\textit{cyclic} if $\ker\pi\cong\mathbb{Z}/n\mathbb{Z}$ for some positive integer $n$. If this is the case, we say that $\pi$ is an
$\mathit{n}$\textit{-isogeny}. We say that $\pi$ is a $\mathit{K}%
$\textit{-rational isogeny} if $\ker\pi$ is $\operatorname*{Gal}\!\left(
\overline{K}/K\right)  $-invariant. When this is the case, we also say that $\pi$ is an \textit{isogeny over $K$} or that $E$ and $E_2$ are \textit{isogenous over $K$}.
In fact, a $K$-rational $n$-isogeny
$\pi:E\rightarrow E_2$ can be written as
\[
\pi\!\left(  x,y\right)  =\left(  f\!\left(  x\right)  ,cy\frac{d}%
{dx}f\!\left(  x\right)  \right)
\]
for some $f\!\left(  x\right)  \in K\!\left(  x\right)  $ and non-zero $c\in
K$. When $c=1$, we say that $\pi$ is a \textit{normalized isogeny} and we have the
following result:

\begin{proposition}
[{\cite[Proposition 4.1]{MR2398793}}]\label{Propnoriso}Let $n>1$ be an integer and let $K$ be a
field of characteristic $0$ or relatively prime to $6n$. Suppose further that
$E_{i}:y^{2}=x^{3}+A_{i}x+B_{i}$ for $i=1,2$ are elliptic curves defined over $K$. If $\pi:E_{1}\rightarrow E_{2}$ is a $K$-rational normalized
$n$-isogeny, then%
\[
\pi\!\left(  x,y\right)  =\left(  \frac{N\!\left(  x\right)  }{D\!\left(
x\right)  },y\frac{d}{dx}\frac{N\!\left(  x\right)  }{D\!\left(  x\right)
}\right)
\]
where
\[
D\!\left(  x\right)  =\prod_{P\in\ker\pi\backslash \{ \mathcal{O}_{E} \} }\left(
x-x_{P}\right)  =x^{n-1}-\sum_{j=2}^{n}\left(  -1\right)  ^{j}\sigma_{j}x^{n-j}\in K\!\left[  x\right]
\]
for some $\sigma_{j}\in K$ with $\sigma_{2}$ denoting the sum of the abscissas
of the points in $\ker\pi\backslash\{\mathcal{O}_{E}\}$. Moreover, $N\!\left(
x\right)  $ is related to $D\!\left(  x\right)  $ through the formula%
\begin{equation}\label{eqnnoriso}
\frac{N\!\left(  x\right)  }{D\!\left(  x\right)  }=nx-\sigma_{2}-\left(
3x^{2}+A_{1}\right)  \frac{\frac{d}{dx}D\!\left(  x\right)  }{D\!\left(
x\right)  }-2\left(  x^{3}+A_{1}x+B_{1}\right)  \frac{d}{dx}\left(
\frac{\frac{d}{dx}D\!\left(  x\right)  }{D\!\left(  x\right)  }\right)  .
\end{equation}

\end{proposition}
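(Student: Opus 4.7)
The plan is to proceed in three steps, combining general structural facts about isogenies between short Weierstrass models with Vélu's explicit formulas for isogenies with a prescribed kernel.

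First, I would show that any $K$-rational isogeny $\pi : E_1 \to E_2$ has the form $\pi(x,y) = (f(x), c\, y\, f'(x))$ for some $f \in K(x)$ and $c \in K^\times$, with the normalized case corresponding to $c = 1$. Because $\pi$ is a group homomorphism, it commutes with the negation maps on $E_i$, so writing $\pi(x,y) = (X,Y)$ and using $\pi(x,-y) = -\pi(x,y)$ together with $y^2 = x^3 + A_1 x + B_1$ forces $X \in K(x)$ and $Y = y \cdot g(x)$ for some $g \in K(x)$. Computing $\pi^*\omega_{E_2} = f'(x)\,dx/(2y g(x))$ and comparing with $c \cdot \omega_{E_1} = c \, dx/(2y)$ yields $g = f'/c$; the normalized case is $c = 1$.

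Next, I would identify $D(x)$ by writing $f = N/D$ in lowest terms with $D$ monic. The poles of $f$ on $E_1$ are precisely the non-identity points of $\ker\pi$, and counting orders using $\deg[x] = 2$ and $\deg \pi = n$ gives $\deg D = n - 1$. Matching vanishing orders (each non-$2$-torsion kernel point $P$ contributes a simple factor $(x - x_P)$, with $-P$ contributing an additional factor sharing the same abscissa; any $2$-torsion kernel point contributes a single factor) yields $D(x) = \prod_{P \in \ker\pi \setminus \{\mathcal O_{E_1}\}} (x - x_P)$, which is in $K[x]$ by Galois-invariance of $\ker\pi$. Expanding the product gives the stated description of $D$ in terms of the elementary symmetric functions $\sigma_j$ of the $x_P$, with $\sigma_2$ equal to the sum of abscissas.

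For equation~(\ref{eqnnoriso}), I would apply Vélu's formulas, which assert
\[
f(x) = x + \sum_{Q \in \ker\pi \setminus \{\mathcal O_{E_1}\}} \bigl(x(P+Q) - x_Q\bigr), \qquad P = (x, y).
\]
The addition law on $E_1$, together with $y_Q^2 = x_Q^3 + A_1 x_Q + B_1$, lets one rewrite each summand (after pairing $Q$ with $-Q$ for non-$2$-torsion and checking $2$-torsion directly) as $(3x_Q^2 + A_1)/(x - x_Q) + 2 y_Q^2/(x - x_Q)^2$. Substituting the identities $D'(x)/D(x) = \sum_Q 1/(x - x_Q)$ and $(D'/D)'(x) = -\sum_Q 1/(x - x_Q)^2$ (with $Q$ ranging over $\ker\pi \setminus \{\mathcal O_{E_1}\}$ so that multiplicities in the product defining $D$ are tracked), along with the polynomial expansions of $3x^2 + A_1$ around $x = x_Q$ and of $x^3 + A_1 x + B_1$ modulo $(x - x_Q)^2$, reassembles Vélu's sum into the claimed closed form. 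The main technical obstacle is the bookkeeping in this last step: the polynomial quotients produce several contributions linear in $x$ and several constants in $\sigma_2$, and one must verify that they combine to exactly $nx - \sigma_2$; handling the potential $2$-torsion kernel point uniformly with the non-$2$-torsion pairs and using $\sigma_2 = \sum_{Q \in \ker\pi \setminus \{\mathcal O_{E_1}\}} x_Q$ with each point counted once is what makes the cancellations work out.
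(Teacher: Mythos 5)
The paper does not prove this proposition; it is quoted verbatim from the cited reference (Proposition 4.1 of Bostan--Morain--Salvy--Schost), so there is no internal proof to compare against. Your outline is a correct and essentially standard derivation of that result. Step one (commuting with negation plus comparison of pulled-back invariant differentials) correctly characterizes normalized isogenies; step two correctly identifies $D$ by matching pole orders of $\pi^{*}x$ at the kernel points, including the separate treatment of $2$-torsion, and Galois-invariance of the kernel puts $D$ in $K[x]$; and the final computation is right: writing V\'elu's sum as $x+\sum_{Q}\bigl((3x_{Q}^{2}+A_{1})/(x-x_{Q})+2y_{Q}^{2}/(x-x_{Q})^{2}\bigr)$ and substituting $D'/D=\sum_{Q}(x-x_{Q})^{-1}$, $(D'/D)'=-\sum_{Q}(x-x_{Q})^{-2}$ together with the divisions $3x^{2}+A_{1}=(x-x_{Q})(3x+3x_{Q})+(3x_{Q}^{2}+A_{1})$ and $x^{3}+A_{1}x+B_{1}=(x-x_{Q})^{2}(x+2x_{Q})+(3x_{Q}^{2}+A_{1})(x-x_{Q})+y_{Q}^{2}$ does make the linear and constant terms collapse to $nx-\sigma_{2}$ exactly as you say (note the paper's convention that $\sigma_{j}$ is the $(j-1)$-st elementary symmetric function of the abscissas). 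The only step you leave implicit is why the given normalized isogeny $\pi$ must coincide with the V\'elu isogeny attached to $\ker\pi$: two separable isogenies of short Weierstrass models with the same kernel differ by an isomorphism $(x,y)\mapsto(u^{2}x,u^{3}y)$ of the target, which rescales the invariant differential by $u^{-1}$, so the normalization $c=1$ forces $u=1$ and hence uniqueness. With that sentence added, your argument is complete.
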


Next, let $\left[  E\right]  _{K}$ denote the $K$-isomorphism class of $E$.
The \textit{isogeny class} of $E/K$ is the set%
\[
\mathcal{I}_{E/K}=\left\{  \left[  F\right]  _{K}\mid F\text{ is isogenous to
}E\text{ over }K\right\}  .
\]
The \textit{isogeny graph} of $E$ is the graph whose vertices are elements of
$\mathcal{I}_{E/K}$, and the edges of the graph correspond to isogenies of
prime degree between representatives of the vertices. Now suppose
$\mathcal{I}_{E/K}$ has $m$ elements, and write $\mathcal{I}_{E/K}=\left\{
\left[  E_{i}\right]  _{K}\right\}  _{i=1}^{m}$. The \textit{isogeny matrix
}of $E$ is the symmetric $m\times m$ matrix $M_{E}=\left(  a_{i,k}\right)  $
where $a_{i,k}$ denotes the smallest degree of a cyclic isogeny between
$E_{i}$ and $E_{k}$. The \textit{isogeny class degree} of $E$ is the least common multiple of all the entries in the isogeny matrix of $E$. When $K=\mathbb{Q}$, the largest entry in $M_E$ is equal to the isogeny class degree of $E$, and we have the following result:

\begin{lemma}
Let $E/\mathbb{Q}$ be an elliptic curve with isogeny class degree $n$. Then the
isogeny graph of $E$ is uniquely determined by $n$.
\end{lemma}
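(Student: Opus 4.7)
The plan is to reduce the claim to the explicit classification of isogeny graphs of elliptic curves over $\Q$ given in \cite[Theorem~1.2]{MR4203041}, which enumerates, up to isomorphism, every isogeny graph arising from an elliptic curve over $\Q$ and records the associated isogeny matrix. Since the remark immediately preceding the lemma asserts that over $\Q$ the isogeny class degree $n$ coincides with the largest entry of $M_E$, it suffices to verify that the map sending a graph in this classification to the largest entry of its isogeny matrix is injective on the finite list of possible graphs.

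Next I would carry out the verification case-by-case. By Mazur \cite{MR482230} together with the enumeration of non-cuspidal $\Q$-rational points on $X_0(n)$ recalled in the introduction, the possible values of $n$ lie in $\{1,2,\ldots,19,21,25,27,37,43,67,163\}$. For each prime $n\in\{2,3,5,7,11,13,17,19,37,43,67,163\}$ and each composite $n\in\{14,15,21,27\}$, Kenku's results give an isogeny class of exactly two elements, so the graph is a single edge labeled $n$ and these values are pairwise distinguished. For the remaining $n\in\{4,6,8,9,10,12,16,18,25\}$, which are precisely the composite values with $X_0(n)$ of genus $0$, Mazur's bound of at most $8$ vertices \cite{MR675184} together with the arithmetic of cyclic $n$-isogenies forces the graph to take a prescribed shape, which one reads off from \cite[Theorem~1.2]{MR4203041}. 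A direct inspection of the nine corresponding matrices shows that their largest entries are the pairwise distinct values $4,6,8,9,10,12,16,18,25$, completing the injectivity check.

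The principal obstacle is purely bookkeeping: for each graph in the Chiloyan--Lozano-Robledo classification one must explicitly read off the isogeny matrix and tabulate its maximum entry, then confirm that no two graphs share the same maximum. No deeper argument is required once the classification is in hand. We remark that the analogous statement fails over a general number field $K$, since extra $K$-rational isogenies (e.g.\ from complex multiplication) can produce distinct graphs having the same maximum cyclic isogeny degree; the rigidity exploited here is genuinely a consequence of Mazur--Kenku in the $K=\Q$ setting.
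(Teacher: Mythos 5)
Your overall strategy is exactly the paper's: the printed proof is the single sentence that the lemma ``follows automatically from \cite[Theorem~1.2]{MR4203041},'' and your elaboration --- that one must check the map sending each graph in that finite classification to the largest entry of its isogeny matrix is injective --- is the content implicitly being invoked. However, your case analysis contains a factual error: for the composite values $n=14,15,21,27$ the isogeny class does \emph{not} have exactly two elements, so the graph is not a single edge labeled $n$. The classes $49.a$, $50.a$, $162.b$ realize four-vertex rectangular graphs (type $R_4(pq)$ with edges labeled $p$ and $q$), and $27.a$ realizes a four-vertex linear graph $L_4$ with three edges labeled $3$; see the paper's Table~\ref{isographsCM}. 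The injectivity conclusion survives, since the maximal matrix entries $14,15,21,27$ are still distinct from one another and from the maxima $4,6,8,9,10,12,16,18,25$ arising in the genus-zero composite cases, but as written your verification misidentifies four of the graphs it is supposed to be distinguishing. With that correction the argument is sound and coincides with the paper's (one-line) proof.
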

\begin{proof}
This follows automatically from \cite[Theorem 1.2]{MR4203041}.
\end{proof}

Let $E_i:y^{2}=x^{3}+A_ix+B_i$ for $i=1,2$ be elliptic curves defined
over $K$. If $j\!\left(  E_1\right)  =j\!\left(  E_2\right)  $, then $E_1$
and $E_2$ are $\overline{K}$-isomorphic and thus $E_1$ is a twist of
$E_2$. In particular, there exists a $d\in K^{\times}/\left(
K^{\times}\right)  ^{m}$ where%
\[
m=\left\{
\begin{array}
[c]{cl}%
2 & j\!\left(  E\right)  \neq0,1728,\\
4 & j\!\left(  E\right)  =1728,\\
6 & j\!\left(  E\right)  =0,
\end{array}
\right.
\]
such that $E_2$ is $K$-isomorphic to%
\[%
\begin{array}
[c]{ll}%
E^{d}_1:y^{2}=x^{3}+d^{2}A_1x+d^{3}B_1 & \text{if }j\!\left(  E_1\right)
\neq0,1728,\\
E^{1728,d}_1:y^{2}=x^{3}+dA_1x & \text{if }j\!\left(  E_1\right)  =1728,\\
E^{0,d}_1:y^{2}=x^{3}+dB_1 & \text{if }j\!\left(  E_1\right)  =0.
\end{array}
\]
We call $E_1^{d}$ the \textit{quadratic twist of $E_1$ by $d$}. If $j\!\left(
E_1\right)  =0$ (resp. $j\!\left(  E_1\right)  =1728$), then the quadratic twist
of $E_1$ by $d$ is the elliptic curve $E_1^{d}=E_1^{0,d^{3}}$ (resp. $E_1^{d}=E_1^{1728,d^{2}}$).

Now let $\pi:E_1\rightarrow E_{2}$ be a $K$-rational $n$-isogeny with
$n>1$. Then we have a $K$-rational $n$-isogeny $\pi^{d}:E_1^{d}\rightarrow
E_{2}^{d}$ and hence, the isogeny class degree of $E_1$ is preserved under
quadratic twist. When $j\!\left(  E_1\right)  =j\!\left(  E_{2}\right)
\in\left\{  0,1728\right\}  $ and $n$ is prime, we have that the following are $K$-rational
$n$-isogenies:
\[%
\begin{array}
[c]{ll}%
\pi^{1728,d}:E_1^{1728,d}\rightarrow E_{2}^{1728,d} & \text{if }j\!\left(
E_1\right)  =j\!\left(  E_{2}\right)  =1728,\\
\pi^{0,d}:E_1^{0,d}\rightarrow E_{2}^{0,d} & \text{if }j\!\left(  E_1\right)
=j\!\left(  E_{2}\right)  =0.
\end{array}
\]

For a positive integer $n$, the $K$-rational points of the modular curve
$X_{0}\!\left(  n\right)  $ (with cusps removed) parameterize $\overline{K}%
$-isomorphism classes of pairs $\left(  E,C\right)  $ where $E$ is an elliptic
curve defined over $K$ and $C$ is a $\operatorname*{Gal}\!\left(  \overline
{K}/K\right)  $-invariant cyclic subgroup of $E$ whose order is $n$. In
particular, if $\pi:E_{1}\rightarrow E_{2}$ is a $K$-rational $n$-isogeny,
then the $\overline{K}$-isomorphism class of the pair $\left(  E_{1},\ker
\pi\right)  $ is a non-cuspidal point of $X_{0}\!\left(  n\right)  \!\left(
K\right)  $. The function field of $X_{0}\!\left(  n\right)  $ is generated by
pairs of functions $\left(  z_1,z_2\right)  $ that satisfy $\Phi
_{n}\!\left(  z_1,z_2\right)  =0$, where $\Phi_{n}\!\left(  X,Y\right)  $
is the classical modular polynomial. Next, let $j\!\left(  \tau\right)  $
denote the modular $j$-function. Then $\Phi_{n}\!\left(  X,Y\right)  $ is the
minimal polynomial of $j\!\left(  n\tau\right)  $ over the field $\mathbb{C}\!\left(  j\!\left(  \tau\right)  \right)  $. From classical results,
$\Phi_{n}\!\left(  X,Y\right)  $ is a symmetric polynomial with integer
coefficients.  The interested reader can find expressions for the modular polynomials
$\Phi_{n}\!\left(  X,Y\right)  $ for $n\leq300$ and primes $n< 1000$ in \cite{Drew}. These
polynomials were computed from the algorithms described in \cite{MR2869057, MR3435725}. Now suppose that $K$ is a field of characteristic $0$ or
relatively prime to $n$. If $\left(  z_1,z_2\right)  \in K^{2}$ is a root
of $\Phi_{n}\!\left(  X,Y\right)  $, then there are elliptic curves $E_{1}$
and $E_{2}$ over $\overline{K}$ such that $j\!\left(  E_{i}\right)  =z_i$, and $E_{1}$ and $E_{2}$ are
$n$-isogenous.

Now suppose that $X_{0}\!\left(  n\right)  $ has genus $0$, i.e.,
$n=1,2,\ldots,10,12,13,16,18,25$. Then the function field of $X_{0}\!\left(
n\right)  $ is generated by the Hauptmodul $h_{n}$ of $X_{0}\!\left(
n\right)  $ (see \cite[Table 2]{MR3084348}). For $n>1$, this gives rise to the
Fricke parameterizations $j_{n,i}\!\left(  t\right)  $ for
$i=1,2$ (see Table \ref{ta:jinv}) which satisfy the following property: if
$\pi:E_{1}\rightarrow E_{2}$ is a $K$-rational $n$-isogeny, then there is a
$t\in K$ such that $j_{n,i}\!\left(  t\right)  =j\!\left(  E_{i}\right)  $
\cite{MR3221641,MR1509993,MR1509988,MR3838339}. In
fact, $\Phi_{n}\!\left(  j_{n,1}\!\left(  t\right)  ,j_{n,2}\!\left(
t\right)  \right)  =0$ for each $t\in K$. We note that Table \ref{ta:jinv} is
reproduced from \cite[Table 3]{MR3084348} and \cite[Tables 4 and 5]%
{MR2514149}, which in turn used results from \cite{MR0376533,MR1486831,MR3221641,MR2341166,MR675184,MR3838339}. The following result summarizes this discussion:
\begin{theorem}\label{thmKF}
Let $n>1$ be an integer and suppose that $X_{0}\!\left(  n\right)  $ has genus
$0$. Let $j_{n,1}\!\left(  t\right)  $ and $j_{n,2}\!\left(  t\right)  $ be as
defined in Table~\ref{ta:jinv}. Suppose further that $E_1$ and $E_2$ are elliptic curves defined over some field $K$ of characteristic $0$ or relatively prime to $n$. If $\pi:E_1\rightarrow E_2$ is a $K$-rational $n$-isogeny,
then there is a $t\in K$ such that the $j$-invariant of $E_i$ is~$j_{n,i}\!\left(  t\right)  $.
\end{theorem}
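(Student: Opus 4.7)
The plan is to combine the moduli interpretation of $X_0(n)$ with the fact that a genus-zero curve admitting a $\mathbb{Q}$-rational Hauptmodul is isomorphic to $\mathbb{P}^1$ over $\mathbb{Q}$; the argument then essentially packages material already recalled in the preliminaries.

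First, I would observe that $\ker\pi$ is $\operatorname*{Gal}\!\left(\overline{K}/K\right)$-invariant, because $\pi$ is $K$-rational, and is cyclic of order $n$ by hypothesis. The moduli interpretation of $X_0(n)$ recalled above then identifies the $\overline{K}$-isomorphism class of the pair $\left(E_1,\ker\pi\right)$ with a non-cuspidal $K$-rational point $P\in X_0(n)(K)$. Next, I would use the existence of a Hauptmodul $h_n$ of $X_0(n)$ defined over $\mathbb{Q}$ (see \cite[Table~2]{MR3084348}), which induces an isomorphism $h_n:X_0(n)\xrightarrow{\sim}\mathbb{P}^1$ over $\mathbb{Q}$ and hence over $K$; the parameter $t:=h_n\!\left(P\right)$ therefore lies in $K$.

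Finally, I would invoke the defining property of the Fricke parameterizations: the two forgetful maps $X_0(n)\to X(1)$ sending $\left(E,C\right)\mapsto j(E)$ and $\left(E,C\right)\mapsto j(E/C)$, when pulled back through $h_n^{-1}$, are the rational functions $j_{n,1}(t)$ and $j_{n,2}(t)$ displayed in Table~\ref{ta:jinv}. Evaluating these at $P$ yields $j(E_i)=j_{n,i}(t)$, as required. The compatibility $\Phi_n\!\left(j_{n,1}(t),j_{n,2}(t)\right)=0$ already noted in the preliminaries can serve as a sanity check.

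The only step with genuine content is the second: establishing that a $\mathbb{Q}$-rational Hauptmodul exists for each $n\in\left\{2,\ldots,10,12,13,16,18,25\right\}$ and that the two $j$-invariants are given by the specific rational functions of $h_n$ recorded in Table~\ref{ta:jinv}. Both assertions are classical---due originally to Fricke and Klein---and are already tabulated in the references \cite{MR3221641,MR1509993,MR1509988,MR3838339,MR3084348,MR2514149}, so no new calculation is needed for Theorem~\ref{thmKF} itself; the subtlety to flag for the reader is simply that the hypothesis $\operatorname{char}K=0$ or $\operatorname{char}K$ coprime to $n$ guarantees that $X_0(n)$ has good reduction and that $h_n$ remains a well-defined isomorphism over~$K$.
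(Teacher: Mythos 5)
Your proposal is correct and follows essentially the same route as the paper, which does not give a formal proof but instead presents Theorem~\ref{thmKF} as a summary of the preceding discussion: the moduli interpretation of $X_{0}\!\left(n\right)$, the existence of a Hauptmodul $h_{n}$ generating its function field, and the classical Fricke parameterizations, all cited to \cite{MR3221641,MR1509993,MR1509988,MR3838339,MR3084348,MR2514149}. Your write-up merely unpacks that citation chain a bit more explicitly (identifying $t=h_{n}(P)$ and the two forgetful maps), so there is no substantive difference to report.
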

{\renewcommand*{\arraystretch}{1.55} \begin{longtable}{cC{3in}C{2in}}
\caption{The Fricke Parameterizations}\label{ta:jinv}\\
\hline
$n$ & $j_{n,1}(  t)  $ & $j_{n,2}(  t)  $\\
\hline
\endfirsthead
\hline
$n$ & $j_{n,1}(  t)  $ & $j_{n,2}(  t)  $ \\
\hline
\endhead
\hline
\multicolumn{3}{r}{\emph{continued on next page}}
\endfoot
\hline
\endlastfoot

$2$ & $\frac{(t+256)^{3}}{t^{2}}$ & $\frac{(t+16)^{3}}{t}$\\\hline
$3$ & $\frac{(t+27)(t+243)^{3}}{t^{3}}$ & $\frac{(t+27)(t+3)^{3}}{t}$\\\hline
$4$ & $\frac{(t^{2}+256t+4096)^{3}}{t^{4}(t+16)}$ & $\frac{(t^{2}+16t+16)^{3}%
}{t(t+16)}$\\\hline
$5$ & $\frac{(t^{2}+250t+3125)^{3}}{t^{5}}$ & $\frac{(t^{2}+10t+5)^{3}}{t}$\\\hline
$6$ & $\frac{(t+12)^{3}(t^{3}+252t^{2}+3888t+15552)^{3}}{t^{6}(t+8)^{2}%
(t+9)^{3}}$ & $\frac{(t+6)^{3}(t^{3}+18t^{2}+84t+24)^{3}}{t(t+8)^{3}(t+9)^{2}%
}$\\\hline
$7$ & $\frac{(t^{2}+13t+49)(t^{2}+245t+2401)^{3}}{t^{7}}$ & $\frac
{(t^{2}+13t+49)(t^{2}+5t+1)^{3}}{t}$\\\hline
$8$ & $\frac{(t^{4}+240t^{3}+2144t^{2}+3840t+256)^{3}}{t(t-4)^{8}(t+4)^{2}}$ &
$\frac{(t^{4}-16t^{2}+16)^{3}}{t^{2}(t^{2}-16)}$\\\hline
$9$ & $\frac{(t+6)^{3}(t^{3}+234t^{2}+756t+2160)^{3}}{(t-3)^{8}(t^{3}-27)}$ &
$\frac{t^{3}(t^{3}-24)^{3}}{t^{3}-27}$\\\hline
$10$ & $\frac{(t^{6}+236t^{5}+1440t^{4}+1920t^{3}+3840t^{2}+256t+256)^{3}%
}{t^{2}(t-4)^{10}(t+1)^{5}}$ & $\frac{(t^{6}-4t^{5}+16t+16)^{3}}%
{t^{5}(t+1)^{2}(t-4)}$\\\hline
$12$ & $\frac{(t^{2}+6t-3)^{3}(t^{6}+234t^{5}+747t^{4}+540t^{3}-729t^{2}%
-486t-243)^{3}}{t^{3}(t-3)^{12}(t-1)(t+1)^{4}(t+3)^{3}}$ & $\frac
{(t^{2}-3)^{3}(t^{6}-9t^{4}+3t^{2}-3)^{3}}{t^{4}(t^{2}-9)(t^{2}-1)^{3}}$\\\hline
$13$ & $\frac{(t^{2}+5t+13)(t^{4}+247t^{3}+3380t^{2}+15379t+28561)^{3}}%
{t^{13}}$ & $\frac{(t^{2}+5t+13)(t^{4}+7t^{3}+20t^{2}+19t+1)^{3}}{t}$\\\hline
$16$ & $\frac{1}{t(t-2)^{16}(t+2)^{4}(t^{2}+4)}(t^{8}+240t^{7}+2160t^{6}+6720t^{5}+17504t^{4}+26880t^{3}%
+34560t^{2}+15360t+256)^{3}$ & $\frac
{(t^{8}-16t^{4}+16)^{3}}{t^{4}(t^{4}-16)}$\\\hline
$18$ & $\frac{(t^{3}+6t^{2}+4)^{3}}{t^{2}(t-2)^{18}(t+1)^{9}%
(t^{2}-t+1)(t^{2}+2t+4)^{2}}(t^{9}+234t^{8}+756t^{7}+2172t^{6}
+1872t^{5}+3024t^{4}+48t^{3}+3744t^{2}+64)^{3}$ & $\frac{(t^{3}-2)^{3}(t^{9}-6t^{6}%
-12t^{3}-8)^{3}}{t^{9}(t^{3}-8)(t^{3}+1)^{2}}$\\\hline
$25$ & $\frac{1}{(t-1)^{25}
(t^{4}+t^{3}+6t^{2}+6t+11)}(t^{10}+240t^{9}+2170t^{8}+8880t^{7}+34835t^{6}+83748t^{5}%
+206210t^{4}+313380t^{3}+503545t^{2}+424740t+375376)^{3}$ & $\frac{1}{t^{5}+5t^{3}+5t-11}(t^{10}+10t^{8}+35t^{6}-12t^{5}%
+50t^{4}-60t^{3}+25t^{2}-60t+16)^{3}$
\end{longtable}}

\section{Isogenous Families of Elliptic Curves}
For an integer $n>1$ such that $X_0(n)$ has genus~$0$, let $\mathcal{C}_{n,i}\!\left(  t,d\right)  $ be as defined in Table~\ref{ta:curves} and let $K$ be a field of characteristic $0$ or relatively prime to $6n$. Our first result shows that for a fixed~$n$, the elliptic curves $\mathcal{C}_{n,i}\!\left(  t,d\right)  $ are isogenous over $K(t,d)$. In the process, we show that the isogenies are normalized, and their explicit expressions are found in \cite{GitHubExplicitIsogenies}. The proofs in this section rely on computer verification, which was done on SageMath \cite{sagemath}. After establishing that the elliptic curves $\mathcal{C}_{n,i}\!\left(  t,d\right)  $ are isogenous over $K(t,d)$, we investigate those values of $t\in \overline{K}$ for which  $\mathcal{C}_{n,i}\!\left(  t,1\right)  $ is a singular curve. We conclude the section by proving Theorem~\ref{CorX0}.

\begin{proposition}\label{thmcalC}
Suppose $n>1$ is an integer such that $X_{0}\!\left(  n\right)  $ has genus
$0$ and let $K$ be a field of characteristic $0$ or relatively prime to $6n$. Let $j_{n,i}\!\left(t\right)  $ and $\mathcal{C}_{n,i}\!\left(  t,d\right)  $ be as defined in Tables \ref{ta:jinv} and \ref{ta:curves},
respectively. For each $n$, define $k_{1}$ and $k_{2}$ as:%
\[%
\begin{array}
[c]{ccccccccc}%
n & 2,3,5,7,13 & 4 & 6,10 & 8 & 9,25 & 12 & 16 & 18\\\hline
k_{1} & 1 & 4 & 1 & 3 & 1 & 5 & 2 & 1\\\hline
k_{2} & 2 & 2 & 4 & 6 & 3 & 4 & 8 & 6
\end{array}
\]
Then the elliptic curves $\mathcal{C}_{n,i}\!\left(  t,d\right)  $ are defined
over $K
\!\left(  t,d\right)  $ and satisfy the following properties:
\begin{itemize}

\item[$\left(  i\right)  $] The $j$-invariant of $\mathcal{C}_{n,k_{i}}\!\left(
t,d\right)  $ is $j_{n,i}\!\left(  t\right)  $ for $i=1,2$;

\item[$\left(  ii\right)  $] The elliptic curves $\mathcal{C}_{n,i}\!\left(
t,d\right)  $ are pairwise non-isomorphic over $K
\!\left(  t,d\right)  $;

\item[$\left(  iii\right)  $] The isogeny class of $\mathcal{C}_{n,1}\!\left(
t,d\right)  $ over $K\!\left(  t,d\right)  $ contains the set $\{  [  \mathcal{C}%
_{n,i}\!\left(  t,d\right)  ]_{K(t,d)}  \}  _{i}$;

\item[$\left(  iv\right)  $] The partial isogeny graph corresponding to $\{
[  \mathcal{C}_{n,i}\!\left(  t,d\right) ]_{K(t,d)} \}  _{i}$ is
given in Table \ref{isographs}, where $\psi_{n,i}^{(m)}$ corresponds to a degree $m$ cyclic isogeny $\psi_{n,i}:\mathcal{C}_{n,k}\!\left(  t,d\right)  \rightarrow
\mathcal{C}_{n,l}\!\left(  t,d\right)  $ defined over $K
\!\left(  t,d\right)  $.
\end{itemize}
\end{proposition}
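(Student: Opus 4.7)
The plan is to reduce each of the four assertions to explicit polynomial identities in $K(t,d)$ that can be verified symbolically.

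For~$(i)$, I would take each Weierstrass model $\mathcal{C}_{n,k_i}(t,d):y^2=x^3+Ax+B$ from Table~\ref{ta:curves}, compute $\Delta=-16(4A^3+27B^2)$ and $j=(-48A)^3/\Delta$, and verify as an identity in $K(t)$ that the result equals $j_{n,i}(t)$ from Table~\ref{ta:jinv}. The parameter $d$ drops out of $j$ by construction, since $\mathcal{C}_{n,i}(t,d)$ is the quadratic twist of $\mathcal{C}_{n,i}(t,1)$ by $d$.

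For~$(ii)$, two short Weierstrass curves $y^2=x^3+A_ax+B_a$ and $y^2=x^3+A_bx+B_b$ over $K(t,d)$ are isomorphic over $K(t,d)$ if and only if there exists $u\in K(t,d)^\times$ with $A_b=u^4A_a$ and $B_b=u^6B_a$. By~$(i)$, curves in the family with distinct $j$-invariants are automatically non-isomorphic, so only pairs with equal $j$ need examination. For each such pair I would examine the ratios $A_b/A_a$ and $B_b/B_a$; in every case the required extraction of a $4$th or $6$th power is blocked by the parity of the exponent of $d$ (or of an irreducible factor in $t$), since $d$ and $t$ are algebraically independent transcendentals over $K$.

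For~$(iii)$ and~$(iv)$, the strategy is to construct explicit normalized isogenies along each edge of the graph claimed in Table~\ref{isographs}. For an edge $\psi_{n,i}^{(m)}:\mathcal{C}_{n,k}(t,d)\to\mathcal{C}_{n,l}(t,d)$, I would exhibit a candidate monic kernel polynomial $D(x)\in K(t,d)[x]$ of degree $m-1$. Proposition~\ref{Propnoriso}, applied with source curve $\mathcal{C}_{n,k}(t,d)$, then determines $N(x)/D(x)$ via equation~(\ref{eqnnoriso}), and the verification reduces to the single polynomial identity
\[
\left(\frac{N(x)}{D(x)}\right)^{\!3}+A_l\,\frac{N(x)}{D(x)}+B_l \;\equiv\; \bigl(x^3+A_k x+B_k\bigr)\!\left(\frac{d}{dx}\frac{N(x)}{D(x)}\right)^{\!2}
\]
in $K(t,d)(x)$. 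Once this is checked for every listed edge, the cyclic structure of the chosen $D(x)$ guarantees a normalized cyclic $m$-isogeny landing on the stated target, and $(iii)$ follows because the union of these edges connects $\mathcal{C}_{n,1}(t,d)$ to every $\mathcal{C}_{n,i}(t,d)$ in the family.

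The main obstacle is producing the kernel polynomials $D(x)$ themselves. For a prime degree~$m$, $D(x)$ can be extracted from Tsukazaki's algorithm \cite{MR3389382} applied to $\mathcal{E}_m(t)$ and then twisted; for composite degrees arising when $n\in\{4,6,8,9,10,12,16,18,25\}$, one instead builds longer isogenies by composing prime-degree ones and descending, taking care that the resulting $D(x)$ still lies in $K(t,d)[x]$ with the correct degree. Once these polynomials are fixed, the identities above and the non-isomorphism checks in~$(ii)$ are mechanical and are handled in SageMath, with the explicit expressions recorded in \cite{GitHubExplicitIsogenies}.
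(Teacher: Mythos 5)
Your proposal is correct and follows essentially the same route as the paper: both reduce all four claims to explicit polynomial identities over $K(t,d)$ verified in SageMath, with the isogenies written in the normalized form of Proposition~\ref{Propnoriso} (the paper gives $L_{n,i}=N_{n,i}/D_{n,i}$ directly and checks the Weierstrass identity and equation~(\ref{eqnnoriso}), exactly as you propose). The only detail you omit is that one edge, $(n,i,k,l)=(12,2,8,2)$, is not normalized as written and must be composed with an explicit $K(t,d)$-isomorphism; also note that every edge in Table~\ref{isographs} already has prime degree, so no descent of composite-degree kernel polynomials is actually needed.
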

{\begingroup
\renewcommand{\arraystretch}{1.3}
 \begin{longtable}{ccc}
 	\caption{Isogeny graph and matrix of $\left\{
\left[  \mathcal{C}_{n,i}\!\left(  t,d\right)  \right]_{K(t,d)}  \right\}  _{i}$}\label{isographs}\\
	\hline
	$n$  & Isogeny Graph & Isogeny Matrix \\
	\hline

	\endfirsthead
	\hline
	$n$  & Isogeny Graph & Isogeny Matrix\\
	\hline
	\endhead
	\hline

	\multicolumn{3}{r}{\emph{continued on next page}}
	\endfoot
	\hline
	\endlastfoot

$2,3,5,7,13$& \begin{tikzcd}
\mathcal{C}_{n,1} \arrow[r, "\psi_{n,1}^{(n)}"] & \mathcal{C}_{n,2}
\end{tikzcd} & \resizebox{.08\textwidth}{!}{$\left(
\begin{array}
[c]{cc}%
1 & n\\
n & 1
\end{array}
\right)  $}	\\\hline

$9,25$ & \begin{tikzcd}
\mathcal{C}_{n,1} \arrow[r, "\psi_{n,1}^{(\sqrt{n})}"] & \mathcal{C}_{n,2} \arrow[r, "\psi_{n,2}^{(\sqrt{n})}"] & \mathcal{C}_{n,3}
\end{tikzcd} &\resizebox{.12\textwidth}{!}{$\left(
\begin{array}
[c]{ccc}%
1 & \sqrt{n} & n\\
\sqrt{n} & 1 & \sqrt{n}\\
n & \sqrt{n} & 1
\end{array}
\right)  $} \\\hline

$6,10$ & \begin{tikzcd}
\mathcal{C}_{n,1} \arrow[r, "\psi_{n,1}^{(2)}"] \arrow[d, "\psi_{n,2}^{(\frac{n}{2})}"] & \mathcal{C}_{n,2} \arrow[d, "\psi_{n,4}^{(\frac{n}{2})}"] \\
\mathcal{C}_{n,3} \arrow[r, "\psi_{n,3}^{(2)}"']               & \mathcal{C}_{n,4}               
\end{tikzcd} & \resizebox{.15\textwidth}{!}{$\left(
\begin{array}
[c]{cccc}%
1 & 2 & \frac{n}{2} & n\\
2 & 1 & n & \frac{n}{2}\\
\frac{n}{2} & n & 1 & 2\\
n & \frac{n}{2} & 2 & 1
\end{array}
\right)  $} \\\hline

$4$ & \begin{tikzcd}
  & \mathcal{C}_{4,2}                                  &                    \\
  & \mathcal{C}_{4,1} \arrow[u, "\psi_{4,1}^{(2)}"'] \arrow[ld, "\psi_{4,2}^{(2)}"'] &                    \\
\mathcal{C}_{4,3} &                                    & \mathcal{C}_{4,4} \arrow[lu, "\psi_{4,3}^{(2)}"']
\end{tikzcd} & \resizebox{.15\textwidth}{!}{$\left(
\begin{array}
[c]{cccc}%
1 & 2 & 2 & 2\\
2 & 1 & 4 & 4\\
2 & 4 & 1 & 4\\
2 & 4 & 4 & 1
\end{array}
\right)  $} \\\hline

$8$ & \begin{tikzcd}
\mathcal{C}_{8,2}                  &                                   &                                    & \mathcal{C}_{8,5}   \\
                   & \mathcal{C}_{8,1}   \arrow[lu, "\psi_{8,1}^{(2)}"'] \arrow[r, "\psi_{8,3}^{(2)}"] & \mathcal{C}_{8,4}   \arrow[ru, "\psi_{8,4}^{(2)}"] \arrow[rd, "\psi_{8,5}^{(2)}"] &   \\
\mathcal{C}_{8,3}   \arrow[ru, "\psi_{8,2}^{(2)}"] &                                   &                                    & \mathcal{C}_{8,6}  
\end{tikzcd} & \resizebox{.2\textwidth}{!}{$\left(
\begin{array}
[c]{cccccc}%
1 & 2 & 2 & 2 & 4 & 4\\
2 & 1 & 4 & 4 & 8 & 8\\
2 & 4 & 1 & 4 & 8 & 8\\
2 & 4 & 4 & 1 & 2 & 2\\
4 & 8 & 8 & 2 & 1 & 4\\
4 & 8 & 8 & 2 & 4 & 1
\end{array}
\right)  $}  \\\hline

$12$ & \begin{tikzcd}
                                  & \mathcal{C}_{12,3} \arrow[r, "\psi_{12,9}^{(3)}"]                                 & \mathcal{C}_{12,4}                 &                    \\
                                  & \mathcal{C}_{12,1} \arrow[r, "\psi_{12,1}^{(3)}"] \arrow[rd, "\psi_{12,4}^{(2)} \text{ } \text{ } \text{ } \text{ }"'] \arrow[u, "\psi_{12,10}^{(2)}"] & \mathcal{C}_{12,2} \arrow[u, "\psi_{12,8}^{(2)}"'] &                    \\
\mathcal{C}_{12,5} \arrow[ru, "\psi_{12,5}^{(2)}"] \arrow[r, "\psi_{12,6}^{(3)}"'] & \mathcal{C}_{12,6} \arrow[ru, "\psi_{12,7}^{(2)} \text{ } \text{ } \text{ }"]                               & \mathcal{C}_{12,7} \arrow[r, "\psi_{12,3}^{(3)}"']  & \mathcal{C}_{12,8} \arrow[lu, "\psi_{12,2}^{(2)}"']
\end{tikzcd} & \resizebox{.24\textwidth}{!}{$\left(
 \begin{array}
[c]{cccccccc}%
1 & 3 & 2 & 6 & 2 & 6 & 2 & 6\\
3 & 1 & 6 & 2 & 6 & 2 & 6 & 2\\
2 & 6 & 1 & 3 & 6 & 12 & 4 & 12\\
6 & 2 & 3 & 1 & 12 & 4 & 12 & 4\\
2 & 6 & 6 & 12 & 1 & 3 & 4 & 12\\
6 & 2 & 12 & 4 & 3 & 1 & 12 & 4\\
2 & 6 & 4 & 12 & 4 & 12 & 1 & 3\\
6 & 2 & 12 & 4 & 12 & 4 & 3 & 1
\end{array}
\right)  $} \\\hline

$16$ &  \adjustbox{scale=0.95}{\begin{tikzcd}
  & \mathcal{C}_{16,2} \arrow[d, "\psi_{16,2}^{(2)}"]                   &                                   & \mathcal{C}_{16,7}                                &   \\
  & \mathcal{C}_{16,1} \arrow[ld, "\psi_{16,1}^{(2)}"'] \arrow[rd, "\psi_{16,3}^{(2)}"] &                                   & \mathcal{C}_{16,6} \arrow[u, "\psi_{16,6}^{(2)}"'] \arrow[rd, "\psi_{16,7}^{(2)}"] &   \\
\mathcal{C}_{16,3} &                                    & \mathcal{C}_{16,4} \arrow[d, "\psi_{16,4}^{(2)}"] \arrow[ru, "\psi_{16,5}^{(2)}"'] &                                   & \mathcal{C}_{16,8} \\
  &                                    & \mathcal{C}_{16,5}                                 &                                   &  
\end{tikzcd}} & \resizebox{.24\textwidth}{!}{$\left(
\begin{array}
[c]{cccccccc}%
1 & 2 & 2 & 2 & 4 & 4 & 8 & 8\\
2 & 1 & 4 & 4 & 8 & 8 & 16 & 16\\
2 & 4 & 1 & 4 & 8 & 8 & 16 & 16\\
2 & 4 & 4 & 1 & 2 & 2 & 4 & 4\\
4 & 8 & 8 & 2 & 1 & 4 & 8 & 8\\
4 & 8 & 8 & 2 & 4 & 1 & 2 & 2\\
8 & 16 & 16 & 4 & 8 & 2 & 1 & 4\\
8 & 16 & 16 & 4 & 8 & 2 & 4 & 1
\end{array}
\right)  $} \\\hline

$18$ & \begin{tikzcd}
\mathcal{C}_{18,1} \arrow[r, "\psi_{18,1}^{(3)}"] \arrow[d, "\psi_{18,2}^{(2)}"] & \mathcal{C}_{18,3} \arrow[d, "\psi_{18,4}^{(2)}"] \arrow[r, "\psi_{18,5}^{(3)}"] & \mathcal{C}_{18,5} \arrow[d, "\psi_{18,6}^{(2)}"] \\
\mathcal{C}_{18,2} \arrow[r, "\psi_{18,3}^{(3)}"']                & \mathcal{C}_{18,4} \arrow[r, "\psi_{18,7}^{(3)}"']                & \mathcal{C}_{18,6}               
\end{tikzcd} & \resizebox{.18\textwidth}{!}{$\left(
\begin{array}
[c]{cccccc}%
1 & 2 & 3 & 6 & 9 & 18\\
2 & 1 & 6 & 3 & 18 & 9\\
3 & 6 & 1 & 2 & 3 & 6\\
6 & 3 & 2 & 1 & 6 & 3\\
9 & 18 & 3 & 6 & 1 & 2\\
18 & 9 & 6 & 3 & 2 & 1
\end{array}
\right)  $} 

\end{longtable}
\endgroup}

\begin{proof}
Observe that each $\mathcal{C}_{n,l}(t,d)$ is defined over $K(t,d)$. For each $n$, let $L_{n,i}$ be as defined in \cite[models.sage]{GitHubExplicitIsogenies}. In particular, $L_{n,i}\in K(t,d,x)$. Let $\psi_{n,i}:\mathcal{C}_{n,k}\!\left(
t,d\right)  \rightarrow\mathcal{C}_{n,l}\!\left(  t,d\right)  $ be defined by
$\psi_{n,i}\!\left(  x,y\right)  =\left(  L_{n,i},y\frac{d}{dx}L_{n,i}\right)
$ for the tuples $\left(  n,i,k,l\right)  \neq\left(  12,2,8,2\right)  $
appearing in Table \ref{isographs}. First, we show that each $\psi_{n,i}$ is a
normalized isogeny. We sketch the case when $n=4$, and refer the reader to
\cite[Verification\_of\_calC.ipynb]{GitHubExplicitIsogenies} for the
verification of the remaining $n$.
Observe that%
\begin{align*}
L_{4,1}  & =\frac{x^2+1296d^2t^2 + 20736d^2t + 24dtx + 192dx }{x+24dt + 192d },\\
L_{4,2}  & =\frac{x^2-20736d^2t - 12dtx + 192dx}{x-12dt - 192d},\\
L_{4,3}  & =\frac{x^2+81d^2t^2 + 6dtx + 192dx }{x+6dt + 192d}.
\end{align*}
It is then verified that each
$\psi_{4,i}:\mathcal{C}_{4,k}\!\left(  t,d\right)  \rightarrow\mathcal{C}%
_{4,l}\!\left(  t,d\right)  $ is a normalized isogeny since
\begin{align*}
\left(  y\frac{d}{dx}L_{4,i}\right)  ^{2}  & =\left(  x^{3}+d^2\mathcal{A}%
_{4,k}x+d^3\mathcal{B}_{4,k}\right)  \left(  \frac{d}{dx}L_{4,i}\right)  ^{2}\\
& =L_{4,i}^{3}+d^2\mathcal{A}_{4,l}L_{4,i}+d^3\mathcal{B}_{4,l}.
\end{align*}

For $\left(  n,i,k,l\right)  =\left(  12,2,8,2\right)  $, let
$\widehat{\mathcal{C}}_{12,2}\!\left(  t,d\right)  $ be the elliptic curve
obtained from $\mathcal{C}_{12,2}\!\left(  t,d\right)  $ via the admissible
change of variables $(x,y)\longmapsto(2^{-4}x,2^{-8}y)$. In
particular, we have a $K\!\left(  t,d\right)  $-isomorphism $\phi:\widehat{\mathcal{C}}_{12,2}\!\left(
t,d\right)  \rightarrow\mathcal{C}_{12,2}\!\left(  t,d\right)  $. Define
$\widehat{\psi}_{12,2}:\mathcal{C}_{12,8}\!\left(  t,d\right)  \rightarrow
\widehat{\mathcal{C}}_{12,2}\!\left(  t,d\right)  $ by $\widehat{\psi}_{12,2}\!\left(
x,y\right)  =\left(  L_{12,2},y\frac{d}{dx}L_{12,2}\right)  $. Then loc. cit. verifies that
$\widehat{\psi}_{12,2}$ is a normalized isogeny. In particular, $\psi
_{12,2}=\phi\circ\widehat{\psi}_{12,2}$ is an isogeny. 

Note that each
$\psi_{n,i}$ is an isogeny defined over $K\!\left(  t,d\right)  $ since each $L_{n,i}\in K\!\left(  t,d,x \right)  $. In addition, the verification shows that the partial
isogeny graph corresponding to $\{  [  \mathcal{C}_{n,i}\!\left(
t,d\right)  ]_{K(t,d)}  \}  _{i}$ is as given in Table~\ref{isographs}.

Lastly, loc. cit. verifies that $j(\mathcal{C}_{n,k_i}(t,d))=j_{n,i}(t)$ for each $n$ and $i=1,2$. This shows that the $K\!\left(  t,d\right)  $-isogeny class $\mathcal{I}_{\mathcal{C}_{n,1}\!\left(
t,d\right)  }$ of $\mathcal{C}_{n,1}\!\left(  t,d\right)  $ contains $\{
[  \mathcal{C}_{n,i}\!\left(  t,d\right)  ]_{K(t,d)} \}  _{i}$, which
concludes the proof.
\end{proof}

\begin{remark}
With notation as in the above proof, let $\psi_{n,i}:\mathcal{C}%
_{n,k}\!\left(  t,d\right)  \rightarrow\mathcal{C}_{n,l}\!\left(  t,d\right)
$ be the normalized isogeny defined by $\psi_{n,i}\!\left(  x,y\right)
=\left(  L_{n,i},y\frac{d}{dx}L_{n,i}\right)  $. By Proposition~\ref{Propnoriso}, $L_{n,i}=\frac{N_{n,i}}{D_{n,i}} $ for some $N_{n,i},D_{n,i} \in K(t,d)[x]$ such that \eqref{eqnnoriso} holds. In what follows, we explicitly give  $N_{n,i},D_{n,i}$. To this end, let \texttt{sigman\_i} and
\texttt{Dn\_i} be as defined in \cite[Isogenies\_for\_calC.sage]{GitHubExplicitIsogenies}. We denote these expressions by $\sigma_{n,i}$ and
$D_{n,i}$, respectively. Set $N_{n,i}=L_{n,i}D_{n,i}$ and observe that $\sigma_{n,i},D_{n,i},N_{n,i}\in K[t,d,x]$. Then \cite[Verification\_of\_calC.ipynb]{GitHubExplicitIsogenies} verifies that \eqref{eqnnoriso} holds for each $n,i$ since
\[
L_{n,i}=\frac{N_{n,i}}{D_{n,i}}=nx-\sigma_{n,i}-\left(
3x^{2}+\mathcal{A}_{n,k}\right)  \frac{\frac{d}{dx}D_{n,i} }{D_{n,i} }-2\left(  x^{3}+\mathcal{A}_{n,k}x+\mathcal{B}_{n,k}\right)  \frac{d}{dx}\left(
\frac{\frac{d}{dx}D_{n,i} }{D_{n,i}  }\right)  .
\]
In fact, our construction of the families $\mathcal{C}_{n,i}(t,d)$ relied on Proposition~\ref{Propnoriso} and knowledge of the kernel polynomial of $\mathcal{C}_{n,k_1}(t,d)$.
\end{remark}

Now suppose that $t\in\overline{K}$ such that $j_{n,i}(t)$ is defined for $i=1,2$. By Proposition~\ref{thmcalC}, the $j$-invariant of $\mathcal{C}_{n,k_i}(t,1)$ is $j_{n,i}(t)$, provided that $\mathcal{C}_{n,k_i}(t,1)$ is an elliptic curve. The next lemma investigates those $t\in\overline{K}$ for which $\mathcal{C}_{n,k_i}(t,d)$ is a singular curve.

\begin{lemma}\label{thelemma}
Suppose $n>1$ is an integer such that $X_{0}\!\left(  n\right)  $ has genus
$0$ and let $K$ be a field of characteristic $0$ or relatively prime to $6n$.
Let $t\in\overline{K}$ such that $j_{n,l}\!\left(  t\right)  $ is defined for $l=1,2$. If $\mathcal{C}%
_{n,i}\!\left(  t,1\right)  $ is a singular curve for some $i$, then $j_{n,1}\!\left(
t\right)  =j_{n,2}\!\left(  t\right)  \in\left\{  0,1728\right\}  $.
\end{lemma}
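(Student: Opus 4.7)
The plan is to reduce the statement to a finite computer-assisted verification using the explicit Weierstrass models in Table~\ref{ta:curves} and the Fricke parameterizations in Table~\ref{ta:jinv}. Each curve $\mathcal{C}_{n,i}(t,1)$ is a short Weierstrass curve $y^{2}=x^{3}+\mathcal{A}_{n,i}(t)x+\mathcal{B}_{n,i}(t)$ with $\mathcal{A}_{n,i},\mathcal{B}_{n,i}\in K[t]$, so its discriminant
\[
\Delta_{n,i}(t)=-16\!\left(4\mathcal{A}_{n,i}(t)^{3}+27\mathcal{B}_{n,i}(t)^{2}\right)
\]
is a polynomial in $K[t]$, and $\mathcal{C}_{n,i}(t_{0},1)$ is singular if and only if $\Delta_{n,i}(t_{0})=0$.

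Fix $n$ and $i$. I would factor $\Delta_{n,i}(t)$ in $K[t]$ to enumerate the finitely many $t_{0}\in\overline{K}$ at which $\mathcal{C}_{n,i}(t_{0},1)$ is singular. The hypothesis that $j_{n,l}(t_{0})$ is defined for $l=1,2$ discards those $t_{0}$ at which either Fricke parameterization has a pole, i.e., the cusps of $X_{0}(n)$. For each surviving $t_{0}$, I would evaluate $j_{n,1}(t_{0})$ and $j_{n,2}(t_{0})$ directly from Table~\ref{ta:jinv} and confirm the two identities $j_{n,1}(t_{0})=j_{n,2}(t_{0})$ and $j_{n,1}(t_{0})\in\{0,1728\}$.

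For the indices $i\in\{k_{1},k_{2}\}$, Proposition~\ref{thmcalC}(i) provides a useful structural shortcut: since $j(\mathcal{C}_{n,k_{l}}(t,1))=j_{n,l}(t)$ as rational functions of $t$, the assumption that $\Delta_{n,k_{l}}(t_{0})=0$ while $j_{n,l}(t_{0})$ is finite forces the numerator $(-48\mathcal{A}_{n,k_{l}})^{3}$ to vanish at $t_{0}$, and hence $\mathcal{A}_{n,k_{l}}(t_{0})=\mathcal{B}_{n,k_{l}}(t_{0})=0$. The limiting value $j_{n,l}(t_{0})$ is then determined by the relative orders of vanishing of $\mathcal{A}_{n,k_{l}}$ and $\mathcal{B}_{n,k_{l}}$ at $t_{0}$, and one verifies case by case that this limit is $0$ or $1728$. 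For the remaining indices $i\notin\{k_{1},k_{2}\}$, one works with the rational function $j(\mathcal{C}_{n,i}(t,1))$ directly, or equivalently exploits the modular polynomial identity $\Phi_{m}(j(\mathcal{C}_{n,i}(t,1)),j_{n,1}(t))=0$ coming from the isogeny $\mathcal{C}_{n,i}\to\mathcal{C}_{n,k_{1}}$ of degree $m$ furnished by Proposition~\ref{thmcalC}(iv).

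The main obstacle is purely the volume of cases: for each value of $n$ in the genus-$0$ list and each of the up to eight indices $i$, one must factor a potentially high-degree polynomial $\Delta_{n,i}(t)$ and evaluate bulky Fricke expressions at each root; for $n\in\{16,18,25\}$ in particular, both $\Delta_{n,i}(t)$ and the rational functions $j_{n,l}(t)$ have lengthy numerators and denominators. This is routine but tedious, and is most cleanly handled in SageMath in the spirit of the auxiliary files \cite{GitHubExplicitIsogenies}.
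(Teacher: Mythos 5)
Your proposal is correct and matches the paper's own argument: the paper likewise computes the vanishing loci of the discriminants of the $\mathcal{C}_{n,i}(t,1)$ to obtain, for each $n$, the finite set $R_n$ of $t$ at which a singular curve occurs while $j_{n,1}(t),j_{n,2}(t)$ remain defined (empty for $n=4,6,8,9,12,16,18$), and then verifies directly from Table~\ref{ta:jinv} — partly by hand, as in the $n=5$ computation with $t^2+22t+125=0$, and partly by computer — that $j_{n,1}(t)=j_{n,2}(t)\in\{0,1728\}$ on each $R_n$. Your additional structural remark about the simultaneous vanishing of $\mathcal{A}_{n,k_l}$ and $\mathcal{B}_{n,k_l}$ is a fine sanity check but is not needed beyond the direct evaluation.
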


\begin{proof}
Let $R_{n}=\left\{  t\in\overline{K}\mid j_{n,l}\!\left(  t\right)  \in\overline{K}\text{ and }\mathcal{C}_{n,1}\!\left(  t,1\right)  \text{ is a singular
curve}\right\}  $. By inspection of the discriminants of $\mathcal{C}_{n,i}(t,1)$, we have that $\mathcal{C}_{n,1}(t,1)$ is a singular curve if and only if $\mathcal{C}_{n,2}(t,1)$ is a singular curve. Moreover,
\[
R_{n}=\left\{
\begin{array}
[c]{ll}%
\varnothing & \text{if }n=4,6,8,9,12,16,18,\\
\left\{  -64\right\}   & \text{if }n=2,\\
\left\{  -27\right\}   & \text{if }n=3,\\
\left\{  t \in \overline{K} \mid t^2 + 22t + 125=0 \right\}   & \text{if }n=5,\\
\left\{  t \in \overline{K} \mid t^2 + 13t + 49=0  \right\}   & \text{if }n=7,\\
\left\{  t \in \overline{K} \mid t^2 + 4 = 0  \right\}   & \text{if }n=10,25,\\
\left\{  t \in \overline{K} \mid (t^2 + 6t + 13)(t^2 + 5t + 13)=0  \right\}   & \text{if
}n=13.
\end{array}
\right.
\]
Next, let $t\in R_{5}$ so that $t^{2}+22t+125=0$. Now observe that%
\[
j_{n,1}\!\left(  t\right)    =\frac{\left(  t^{2}+250t+3125\right)  ^{3}%
}{t^{5}} = 1728\frac{\left(  19t+250 \right)  ^{3}}{t^{5}} = 1728
\]
since $\left(  19t+250\right)  ^{3}-t^{5}=-\left(  t^{3}-22t^{2}%
-6500t-125000\right)  (t^{2}+22t+125)=0$ implies that $\left(  19t+250\right)
^{3}/t^{5}=1$. Similar arguments (see \cite[Verification\_of\_Lemma\_3\_3.ipynb]{GitHubExplicitIsogenies}) show that for each $t\in R_{n}$ where $n=2,3,5,7,10,25$, 
\[
j_{n,1}\!\left(  t\right)  =j_{n,2}\!\left(  t\right)  =\left\{
\begin{array}
[c]{cl}%
0 & \text{if }n=3,7,\\
1728 & \text{if }n=2,5,10,25.
\end{array}
\right.
\]
The lemma now follows since
\begin{align*}
j_{13,1}\!\left(  t\right)  =j_{13,2}\!\left(  t\right)&=0  \quad \qquad\text{if }t^2+6t+13=0,\\
j_{13,1}\!\left(  t\right)  =j_{13,2}\!\left(  t\right)&=1728  \hspace{-0.4em} \qquad \text{if }t^2+5t+13=0.
\qedhere \end{align*} 
\end{proof}

As a consequence of the above results, we obtain Theorem~\ref{CorX0}:
\begin{proof}[Proof of Theorem~\ref{CorX0}.]
Since $E_{1}$ and $E_{2}$ are $n$-isogenous
over $K$, we have by Theorem \ref{thmKF} that the $j$-invariant of $E_{i}$ is
$j_{n,i}\!\left(  t\right)  $ for some $t \in K$. It follows by Proposition~\ref{thmcalC} and
Lemma~\ref{thelemma}, that $E_{i}$ is $\overline{K}$-isomorphic to
$\mathcal{C}_{n,k_{i}}\!\left(  t,1\right)  $. In particular, $E_{i}$ is a
twist of $\mathcal{C}_{n,k_{i}}\!\left(  t,1\right)  $. 
Since $E_1$ and $E_2$ are $n$-isogenous over $K$ such that their $j$-invariants are not both identically $0$ or $1728$,
we have by \cite[Proposition X.5.4]{MR2514094} that there is a $d\in
K^{\times}/\left(  K^{\times}\right)  ^{2}$ such that $E_{i}$ is
$K$-isomorphic to a quadratic twist by $d$ of $\mathcal{C}_{n,k_{i}}\!\left(
t,1\right)  $. In particular, $E_{i}$ is $K$-isomorphic to $\mathcal{C}%
_{n,k_{i}}(t,d)$. By Proposition~\ref{thmcalC}, the isogeny class $\mathcal{I}_{E/K}$ of $E$ contains $\{  [  \mathcal{C}_{n,i}\!\left(  t,d\right)  ]
_{K}\}_i  $. 
\end{proof}

\section{Isogenies and Semistability}\label{addred}
In this section, we prove Theorem~\ref{Thm3}. To this end, let $n\in\left\{  4,6,9\right\}  $ and define $F_{n}\!\left(  a,b\right)
:y^{2}+a_{1}xy+a_{3}y=x^{3}+a_{2}x^{2}+a_{4}x$ for $a_{1},a_{2},a_{3},a_{4}$ as given below:
\begin{equation}
\renewcommand{\arraystretch}{1.2}
\renewcommand{\arraycolsep}{0.4cm}
\begin{array}
[c]{ccccc}%
n & a_{1} & a_{2} & a_{3}& a_{4}\\\hline
4 & 0 & b-16a & 0 & -16ab \\\hline
6 & 36a+5b & 2b(9a+b) & 9b(8a+b)(9a+b)& 0\\\hline
9 & 3(6a+b) & 0 & (b-3a)^{3} & 0
\end{array}
\end{equation}
Next, let $\alpha_{n}$ and $\gamma_{n}$ be defined by the following expressions:
\begin{equation}
\renewcommand{\arraystretch}{1.2}
\renewcommand{\arraycolsep}{0.1cm}
\begin{array}
[c]{ccc}%
n & \alpha_{n} & \gamma_{n}\\\hline
4 &16(256a^2 + 16ab + b^2) & 4096 a^2b^2 (16a + b)^2 \\\hline
6 & 9(12a+b)(15552a^{3}+3888a^{2}b+252ab^{2}+b^{3}) & 729ab^{6}%
(8a+b)^{2}(9a+b)^{3}\\\hline
9 & 9(6a+b)(2160a^{3}+756a^{2}b+234ab^{2}+b^{3}) & 729a(-3a+b)^{9}%
(9a^{2}+3ab+b^{2})%
\end{array}
\label{alpgam}%
\end{equation}

\begin{lemma}
\label{intmodel}Let $K$ be a field of characteristic $0$. Let $n\in\left\{
4,6,9\right\}  $ and set $d_4=a$ and $d_n=1$ for $n=6,9$.
Then the elliptic curves $\mathcal{C}_{n,1}\!\left(  \frac{b}%
{a},d_n\right)  $ and $F_{n}\!\left(  a,b\right)  $ are $K\!\left(  a,b\right)
$-isomorphic. Moreover, the invariants $c_{4}$ and $\Delta$ of $F_{n}\!\left(
a,b\right)  $ are $\alpha_{n}$ and $\gamma_{n}$, respectively.
\end{lemma}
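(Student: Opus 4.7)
The plan is to verify both assertions by explicit polynomial manipulation, since the lemma reduces to finitely many identities in $K[a,b]$ (one per $n\in\{4,6,9\}$). No new ideas are needed beyond the standard toolkit for Weierstrass models.

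First, I would convert $F_n(a,b)$ to a short Weierstrass model. Applying the usual substitution $(x,y)\mapsto\!\bigl(x-\tfrac{b_2}{12},\,\tfrac{1}{2}(y-a_1x-a_3)\bigr)$, where $b_2=a_1^2+4a_2$, $b_4=2a_4+a_1a_3$, $b_6=a_3^2$ (note $a_6=0$ in all three cases), and $b_8=-a_4^2+a_2a_3^2-a_1a_3a_4$, yields a model $y^2=x^3-\tfrac{c_4}{48}x-\tfrac{c_6}{864}$ with $c_4=b_2^2-24b_4$, $c_6=-b_2^3+36b_2b_4-216b_6$, and $\Delta=-b_2^2b_8-8b_4^3-27b_6^2+9b_2b_4b_6$. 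Plugging in the specific $a_1,a_2,a_3,a_4$ from the table, I would confirm $c_4=\alpha_n$ and $\Delta=\gamma_n$ as listed in \eqref{alpgam}. This also gives the explicit coefficients $A_n(a,b),B_n(a,b)$ of the short model of $F_n(a,b)$.

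Second, using the explicit expressions for $\mathcal{C}_{n,1}(t,d):y^2=x^3+\mathcal{A}_{n,1}(t,d)x+\mathcal{B}_{n,1}(t,d)$ from Table~\ref{ta:curves}, I would substitute $t=b/a$ and $d=d_n$, then clear denominators through an admissible change of variables $(x,y)\mapsto(u^2x,u^3y)$ over $K(a,b)$, producing a short Weierstrass model with polynomial coefficients in $a,b$. The exponent of $a$ needed to clear denominators dictates the choice $d_4=a$ versus $d_6=d_9=1$ (since for $n=6,9$ the short-model coefficients of $\mathcal{C}_{n,1}(b/a,1)$ are already rescalings by powers of $a$, while the $n=4$ case requires an extra quadratic twist by $a$ to land in $K[a,b]$). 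After this rescaling, a direct comparison shows that the result equals $y^2=x^3+A_n(a,b)x+B_n(a,b)$, the short model of $F_n(a,b)$; since the short Weierstrass form is unique up to $(x,y)\mapsto(u^2x,u^3y)$, this establishes the $K(a,b)$-isomorphism claim.

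The work is purely computational, and the main obstacle is bookkeeping rather than any conceptual difficulty: one must track the correct twisting factor in each case and verify polynomial identities of moderate size. These identities are straightforward to verify using the same SageMath infrastructure employed in the proof of Proposition~\ref{thmcalC}, so the verification can be appended to the files in \cite{GitHubExplicitIsogenies}. In particular, both the isomorphism and the explicit formulas for $c_4,\Delta$ reduce to symbolic equalities in $\mathbb{Z}[a,b]$ that hold verbatim for each $n\in\{4,6,9\}$.
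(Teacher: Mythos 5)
Your proposal is correct and coincides in substance with the paper's proof, which simply defers the same finite list of symbolic identities (short Weierstrass form of $F_n(a,b)$, its invariants $c_4$ and $\Delta$, and the rescaling matching it to $\mathcal{C}_{n,1}(\frac{b}{a},d_n)$) to a SageMath verification in \cite{GitHubExplicitIsogenies}. The explicit computational plan you outline, including the role of the twist $d_4=a$ in clearing denominators, is exactly what that verification carries out.
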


\begin{proof}
This is verified in \cite[section\_4.ipynb]{GitHubExplicitIsogenies}.
\end{proof}

\begin{lemma}\label{GCD}
For $n\in\left\{  4,6,9\right\}  $, let $\alpha_{n}$ and $\gamma_{n}$ be as
given in (\ref{alpgam}). Then there exists $\mu_{n},\nu_{n}\in\mathbb{Z}\!\left[  a,b,r,s\right]  $ such that the following identities hold in $\mathbb{Z}\!\left[  a,b,r,s\right]  $:%
\begin{equation}
\renewcommand{\arraystretch}{1.2}
\renewcommand{\arraycolsep}{0.4cm}
\begin{array}
[c]{cccc}%
n & 4 & 6 & 9\\\hline
\mu_{n}\alpha_{n}+\nu_{n}\gamma_{n}  & 2^{28}(ra^6+sb^6) & 2^{16}3^{24}\left(  ra^{15}%
+sb^{15}\right)   & 3^{39}\left(
ra^{15}+sb^{15}\right)  %
\end{array}
\end{equation}
In particular, if $K$ is a number field or a local field with ring of integers
$R_{K}$ and $a,b\in R_{K}$ are coprime elements, then the ideal $\left(
\alpha_{n}\!\left(  a,b\right)  \right)  +\left(  \gamma_{n}\!\left(
a,b\right)  \right)  $ is contained in the principal ideal~$nR_{K}$.
\end{lemma}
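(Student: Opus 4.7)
The plan is to establish the Bezout-type identity for each $n$ by direct computation in $\mathbb{Z}[a,b,r,s]$, and then deduce the consequence via a specialization of $r$ and $s$.

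For the identity itself, I begin by noting that each $\alpha_n$ and $\gamma_n$ is homogeneous in $(a,b)$, and the given factorization of $\gamma_n$ makes its irreducible factors over $\mathbb{Z}[a,b]$ visible: these are among $a$, $b$, $16a+b$, $8a+b$, $9a+b$, $b-3a$, and $9a^2+3ab+b^2$. I would first verify that $\alpha_n$ and $\gamma_n$ are coprime in $\mathbb{Q}[a,b]$ by substituting each irreducible factor's vanishing locus into $\alpha_n$ and checking non-vanishing (e.g., for $n=4$: $\alpha_4(a,0) = 4096a^2$ and $\alpha_4(a,-16a) = 4096a^2$, and similarly at $a=0$). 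With coprimality established, the extended Euclidean algorithm applied to $\alpha_n,\gamma_n$ as elements of $\mathbb{Q}(b)[a]$ and of $\mathbb{Q}(a)[b]$, then cleared of denominators, yields polynomials $p_n,q_n,p_n',q_n'\in\mathbb{Z}[a,b]$ such that
\[
p_n\alpha_n + q_n\gamma_n = C_n\, a^{k_n}, \qquad p_n'\alpha_n + q_n'\gamma_n = C_n\, b^{k_n},
\]
where $(C_n,k_n) = (2^{28},6),(2^{16}3^{24},15),(3^{39},15)$ for $n=4,6,9$ respectively. Setting $\mu_n := r p_n + s p_n'$ and $\nu_n := r q_n + s q_n'$ then produces the claimed identity. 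Given the intricacy, this step is most efficiently carried out by a computer algebra system (such as SageMath), in keeping with the verifications elsewhere in the paper.

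For the consequence, assume $a,b\in R_K$ are coprime. Then $(a,b)=R_K$ implies $(a^{k_n},b^{k_n})=R_K$, so there exist $r_0,s_0\in R_K$ with $r_0 a^{k_n} + s_0 b^{k_n} = 1$. Substituting into the identity gives
\[
C_n = \mu_n(a,b,r_0,s_0)\,\alpha_n(a,b) + \nu_n(a,b,r_0,s_0)\,\gamma_n(a,b) \in (\alpha_n(a,b)) + (\gamma_n(a,b)),
\]
so $(\alpha_n(a,b)) + (\gamma_n(a,b))$ contains $C_n R_K$. Since $C_n$ has exactly the prime factors of $n$, any prime ideal of $R_K$ containing $(\alpha_n(a,b)) + (\gamma_n(a,b))$ must divide $n$, which gives the stated containment in $nR_K$.

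The main obstacle is the explicit Bezout computation that produces the precise prime-power constants $C_n$. The conceptual ingredient (coprimality in $\mathbb{Q}[a,b]$ followed by the extended Euclidean algorithm) is standard, but tracking the integer content through the algorithm carefully enough to arrive at the specific constants $2^{28}$, $2^{16}3^{24}$, and $3^{39}$ is tedious, which is why a CAS verification is the natural route.
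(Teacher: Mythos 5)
Your treatment of the B\'ezout identity itself is sound and is essentially the paper's approach: the paper simply exhibits explicit $\mu_n,\nu_n$ in its code repository and verifies the identity by computer, and your resultant/extended-Euclidean recipe (two identities producing $C_n a^{k_n}$ and $C_n b^{k_n}$, combined linearly with $r$ and $s$) is a reasonable way to produce those witnesses, with the CAS doing the bookkeeping of the integer content exactly as the paper does.

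The genuine gap is in the last sentence of your deduction of the ``in particular'' clause. From $r_0a^{k_n}+s_0b^{k_n}=1$ you correctly obtain $C_n\in(\alpha_n(a,b))+(\gamma_n(a,b))=:I$, hence $I\supseteq C_nR_K$, hence every prime ideal containing $I$ divides $C_n$ and therefore divides $n$. But the step ``which gives the stated containment in $nR_K$'' is a non sequitur: knowing that every prime containing $I$ divides $n$ does not force $I\subseteq nR_K$ (the unit ideal, or $2R_K$ when $n=4$, satisfies the former and not the latter). Indeed, the literal containment asserted in the lemma can fail: for $n=6$ and $a=b=1$ one computes $\alpha_6=3^2\cdot 13\cdot 19693$, which is odd, and $\gamma_6=3^{10}\cdot 2^3\cdot 5^3$, so $I=9\mathbb{Z}\not\subseteq 6\mathbb{Z}$. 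What your argument actually establishes---that any prime $\mathfrak{p}$ of $R_K$ containing both $\alpha_n(a,b)$ and $\gamma_n(a,b)$ must divide $n$---is exactly the statement invoked in the proof of Theorem~\ref{Thm3}, where additive reduction at $\mathfrak{p}$ gives $v_{\mathfrak{p}}(\alpha_n),v_{\mathfrak{p}}(\gamma_n)>0$ and one concludes $\mathfrak{p}\mid n$. You should state and stop at that (correct and sufficient) conclusion rather than asserting that it yields the set-theoretic containment $I\subseteq nR_K$ as written.
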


\begin{proof}
Explicit equations for $\mu_{n},\nu_{n}\in%
\mathbb{Z}
\!\left[  a,b,r,s\right]  $ are found in \cite[models.sage]%
{GitHubExplicitIsogenies}. The result is verified in \cite[section\_4.ipynb]%
{GitHubExplicitIsogenies}.
\end{proof}

The next result guarantees that Theorem~\ref{CorX0} can be invoked whenever $E_1$ and $E_2$ are $n$-isogenous elliptic curves over $K$ with $n\in \{4,6,9\}$.

\begin{lemma}
\label{Lemma01728}Let $K$ be a number field and let $E_{1}$ and $E_{2}$ be
$n$-isogenous elliptic curves over~$K$ where $n\in\left\{  4,6,9\right\}  $.
If $j\!\left(  E_{i}\right)  \in\left\{  0,1728\right\}  $ for some $i=1,2$,
then $j\!\left(  E_{1}\right)  \neq j\!\left(  E_{2}\right)  $.
\end{lemma}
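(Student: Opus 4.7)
My plan is to argue by contradiction using complex multiplication. Suppose $j(E_1) = j(E_2) = j_0 \in \{0, 1728\}$. Then $E_1$ and $E_2$ are $\overline{K}$-isomorphic, and picking any $\overline{K}$-isomorphism $\iota : E_2 \to E_1$ and composing with the given $K$-rational $n$-isogeny $\phi : E_1 \to E_2$ produces an $\overline{K}$-endomorphism $\alpha := \iota \circ \phi$ of $E_1$ of degree $n$ whose kernel equals $\ker \phi$ and is therefore cyclic of order $n$.

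Next, I would identify $\mathrm{End}_{\overline{K}}(E_1)$ with $\Z[\omega]$ (the maximal order of $\Q(\sqrt{-3})$, where $\omega$ is a primitive cube root of unity) when $j_0 = 0$, and with $\Z[i]$ when $j_0 = 1728$; write $\mathcal{O}$ for this ring. Via the standard CM identification $\ker \alpha \cong \mathcal{O}/\alpha\mathcal{O}$ as abelian groups, the claim reduces to the purely ring-theoretic assertion that for $n \in \{4,6,9\}$, no $\alpha \in \mathcal{O}$ with norm $N(\alpha) = n$ has cyclic quotient $\mathcal{O}/\alpha\mathcal{O}$.

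The final step is an elementary case analysis of the norm forms $a^2 - ab + b^2$ on $\Z[\omega]$ and $a^2 + b^2$ on $\Z[i]$. For $n = 6$, neither form represents $6$: the prime $2$ is inert in $\Z[\omega]$, so $2 \mid N(\alpha)$ forces $4 \mid N(\alpha)$, while $3$ is inert in $\Z[i]$, so $3 \mid N(\alpha)$ forces $9 \mid N(\alpha)$. For $n \in \{4, 9\}$, a direct enumeration of the finitely many lattice points shows that every element of norm $n$ is a unit multiple of $\sqrt{n}$ (in the ramified case $n = 9$ in $\Z[\omega]$ one uses $(3) = (1-\omega)^2\mathcal{O}$ together with $(1-\omega)^2 = -3\omega$), so $\alpha\mathcal{O} = \sqrt{n}\,\mathcal{O}$ and consequently $\mathcal{O}/\sqrt{n}\,\mathcal{O} \cong (\Z/\sqrt{n}\,\Z)^2$ is not cyclic.

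The main obstacle I foresee is invoking the identification $\ker \alpha \cong \mathcal{O}/\alpha\mathcal{O}$ cleanly and citing it precisely; once this standard fact from CM theory is in place, the remaining case-by-case check of the norm forms is routine.
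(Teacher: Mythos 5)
Your proof is correct, but it takes a genuinely different route from the paper. The paper's argument stays inside its Fricke-parameterization framework: by Theorem~\ref{thmKF} there is a $t\in K$ with $j(E_i)=j_{n,i}(t)$, and a SageMath computation recorded in the accompanying repository checks directly that $j_{n,1}(t)\in\{0,1728\}$ forces $j_{n,1}(t)\neq j_{n,2}(t)$ for $n\in\{4,6,9\}$. Your argument instead converts the hypothetical equality $j(E_1)=j(E_2)=j_0\in\{0,1728\}$ into a degree-$n$ endomorphism $\alpha=\iota\circ\phi$ with cyclic kernel, identifies $\ker\alpha\cong\mathcal{O}/\alpha\mathcal{O}$ for $\mathcal{O}=\Z[\omega]$ or $\Z[i]$ (both of class number one, so the identification is immediate), and rules out each $n$ by elementary arithmetic of the norm forms: $6$ is not a norm at all (since $2$ is inert in $\Q(\sqrt{-3})$ and $3$ is inert in $\Q(i)$), while every element of norm $4$ or $9$ is a unit times $2$ or $3$, giving the non-cyclic quotient $(\Z/\sqrt{n}\,\Z)^2$. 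The details all check out, including the ramified case $(3)=(1-\omega)^2$ in $\Z[\omega]$. What your approach buys is a conceptual, computer-free proof that does not use $X_0(n)$ having genus $0$ at all and would work verbatim over any field of characteristic $0$; what the paper's approach buys is uniformity with the rest of its methodology, where the same notebook infrastructure verifies many statements about the explicit families $\mathcal{C}_{n,i}(t,d)$ at once. The only point to make precise in a final write-up is the citation for $\ker\alpha\cong\mathcal{O}/\alpha\mathcal{O}$ (e.g., via the analytic uniformization $E(\C)\cong\C/\mathfrak{a}$ with $\mathfrak{a}$ principal), which you already flagged.
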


\begin{proof}
Without loss of generality we may assume that $j\!\left(  E_{1}\right)
\in\left\{  0,1728\right\}  $. By Theorem \ref{thmKF}, there is a $t\in K$
such that the $j$-invariant of $E_{i}$ is $j_{n,i}\!\left(  t\right)  $. Then
\cite[section\_4.ipynb]{GitHubExplicitIsogenies} verifies that if $j_{n,1}\!\left(
t\right)  \in\left\{  0,1728\right\}  $, then $j_{n,1}\!\left(  t\right)  \neq
j_{n,2}\!\left(  t\right)  $.
\end{proof}

We are now ready to prove Theorem \ref{Thm3}:

\begin{proof}
[Proof of Theorem \ref{Thm3}]Since $E$ admits a $K$-rational $n$-isogeny over
$K$ with $n\in\left\{  4,6,9\right\}  $, there is an elliptic curve
$E^{\prime}$ that is $n$-isogenous to $E$ over $K$. By Lemma \ref{Lemma01728},
the $j$-invariants of $E$ and $E^{\prime}$ are not both identically $0$ or
$1728$. By Theorem \ref{CorX0}, there exists a $t\in K$ and $d\in K^{\times
}/\left(  K^{\times}\right)  ^{2}$ such that $E$ is $K$-isomorphic to
$\mathcal{C}_{n,k_{1}}\!\left(  t,d\right)  $. In particular, $E^{d}$ is
$K$-isomorphic to $\mathcal{C}_{n,k_{1}}\!\left(  t,1\right)  $. Moreover,
$\mathcal{C}_{n,1}\!\left(  t,1\right)  $ is isogenous to $\mathcal{C}%
_{n,k_{1}}\!\left(  t,1\right)  $ over $K$. If $\mathcal{C}_{n,1}\!\left(
t,1\right)  $ is semistable, then we are done. So suppose $\mathcal{C}%
_{n,1}\!\left(  t,1\right)  $ has additive reduction at some prime
$\mathfrak{p}$ of $K$. Let $R_{K_{\mathfrak{p}}}$ denote the ring of integers
of $K_{\mathfrak{p}}$. Now suppose that $t=\frac{b}{a}$ for some coprime elements $a$
and $b$ of $R_{K_{\mathfrak{p}}}$. By Lemma \ref{intmodel}, $\mathcal{C}%
_{n,1}\!\left(  t,1\right)  $ is $K_{\mathfrak{p}}$-isomorphic to $F_{n}%
=F_{n}\!\left(  a,b\right)  $. 

Next, let $\left(  x,y\right)  \longmapsto\left(  u_{n}^{2}x+r_{n},u_{n}%
^{3}y+u_{n}^{2}s_{n}x+w_{n}\right)  $ be an admissible change of variables
resulting in a minimal model $F_{n}^{\prime}$ at $\mathfrak{p}$ for $F_{n}$.
Since $F_{n}$ is given by an integral Weierstrass model, we have by
\cite[Proposition VII.1.3]{MR2514094} that $u_{n},r_{n},s_{n},w_{n}\in
R_{K_{\mathfrak{p}}}$. In particular, the minimal discriminant of $F_{n}$ at
$\mathfrak{p}$ is $\Delta_{n}=u_{n}^{-12}\gamma_{n}$. Similarly, the invariant
$c_{4}$ of $F_{n}^{\prime}$ is given by $c_{4,n}=u_{n}^{-4}\alpha_{n}$. Since $F_n$ has additive reduction at $\mathfrak{p}$, it follows that $v_{\mathfrak{p}}\!\left(  \alpha_{n}\right)  ,v_{\mathfrak{p}}\!\left(
\gamma_{n}\right)  >0$. By
Lemma \ref{GCD}, $\alpha
_{n}R_{K_{\mathfrak{p}}}+\gamma_{n}R_{K_{\mathfrak{p}}}\subseteq
nR_{K_{\mathfrak{p}}}.$ Hence $\mathfrak{p}|n$.
\end{proof}

\section{Isogeny Graphs of Rational Elliptic Curves}\label{section5}
In this section, we prove Theorem~\ref{Thm2}. We begin by considering
those integers $n$ such that $X_{0}\!\left(  n\right)  $ has positive genus
with a non-cuspidal $\mathbb{Q}
$-rational point. That is, $n=11$, $14$, $15$, $17$, $19$, $21$, $27$, $37$, $43$, $67$, $163$. The non-cuspidal $\mathbb{Q}$-rational points of $X_{0}\!\left(  n\right)  $ for $X_{0}\!\left(  n\right)  $ having positive genus are well-understood, and what follows is a summary of the discussion and results that appear in \cite{MR3084348,MR482230}. For these
values of $n$, let $\nu_{n}$ denote the number of $\mathbb{Q}$-rational non-cuspidal points on $X_{0}\!\left(  n\right)  $. In particular, we have that, up to twist, there are $\nu_n$ elliptic curves $E/\mathbb{Q}$ that admit a $\mathbb{Q}$-rational $n$-isogeny. 
By \cite{MR482230},
\[
\nu_{n}=\left\{
\begin{array}
[c]{cl}
1 & \text{if }n=19,27,43,67,163,\\
2 & \text{if }n=14,17,37,\\
3 & \text{if }n=11,\\
4 & \text{if }n=15,21.
\end{array}
\right.
\]
These non-cuspidal points determine, up to quadratic twist, the following isogeny
classes (given by their LMFDB label) of rational elliptic curves:%
\begin{equation}\label{isogen1}
\begin{tabular}
[c]{ccccccc}%
$n$ & $11$ & $14$ & $15$ & $17$ & $19$ & $21$\\\hline
Isogeny class & \href{https://www.lmfdb.org/EllipticCurve/Q/121/a/}{121.a},\href{https://www.lmfdb.org/EllipticCurve/Q/121/b/}{121.b} & \href{https://www.lmfdb.org/EllipticCurve/Q/49/a/}{49.a} & \href{https://www.lmfdb.org/EllipticCurve/Q/50/a/}{50.a} & \href{https://www.lmfdb.org/EllipticCurve/Q/14450/b/}{14450.b} & \href{https://www.lmfdb.org/EllipticCurve/Q/361/a/}{361.a} & \href{https://www.lmfdb.org/EllipticCurve/Q/162/b/}{162.b}\\\hline \hline
$n$ & $27$ & $37$ & $43$ & $67$ & $163$ & \\ \hline
Isogeny class & \href{https://www.lmfdb.org/EllipticCurve/Q/27/a/}{27.a} & \href{https://www.lmfdb.org/EllipticCurve/Q/1225/b/}{1225.b} & \href{https://www.lmfdb.org/EllipticCurve/Q/1849/b/}{1849.b} & \href{https://www.lmfdb.org/EllipticCurve/Q/4489/b/}{4489.b} & \href{https://www.lmfdb.org/EllipticCurve/Q/26569/a/}{26569.a} & \\\hline
\end{tabular}
\end{equation}
Next, let $\mathcal{C}_{n,i}(d)$ be as defined in Table~\ref{ta:curves1}. Then $\mathcal{C}_{n,i}(d)$ is the quadratic twist of $\mathcal{C}_{n,i}(1)$. Moreover, $\mathcal{C}_{n,i}(1)$ is $\mathbb{Q}$-isomorphic to the following elliptic curve (given by their LMFDB label):

{\begingroup
\renewcommand{\arraystretch}{1.2}
\captionof{table}{LMFDB Label of $\mathcal{C}_{n,i}(1)$}
\begin{center}
\begin{tabular}
[c]{ccccc}
\diagbox[width=3em]{$n$}{\vspace{-1.5em}$i$} & $1$ & $2$ & $3$ & $4$\\\hline
$11$ & \href{https://www.lmfdb.org/EllipticCurve/Q/121.a1/}{121.a1}& \href{https://www.lmfdb.org/EllipticCurve/Q/121.a2/}{121.a2} & \href{https://www.lmfdb.org/EllipticCurve/Q/121.b1/}{121.b1} & \href{https://www.lmfdb.org/EllipticCurve/Q/121.b1/}{121.b2}\\\hline
$14$ & \href{https://www.lmfdb.org/EllipticCurve/Q/49.a1/}{49.a1} & \href{https://www.lmfdb.org/EllipticCurve/Q/49.a2/}{49.a2} & \href{https://www.lmfdb.org/EllipticCurve/Q/49.a3/}{49.a3} & \href{https://www.lmfdb.org/EllipticCurve/Q/49.a4/}{49.a4}\\\hline
$15$ & \href{https://www.lmfdb.org/EllipticCurve/Q/50.a1/}{50.a1} & \href{https://www.lmfdb.org/EllipticCurve/Q/50.a3/}{50.a3} & \href{https://www.lmfdb.org/EllipticCurve/Q/50.a4/}{50.a4} & \href{https://www.lmfdb.org/EllipticCurve/Q/50.a2/}{50.a2}\\\hline
$17$ & \href{https://www.lmfdb.org/EllipticCurve/Q/14450.b1/}{14450.b1} & \href{https://www.lmfdb.org/EllipticCurve/Q/14450.b2/}{14450.b2} &  & \\\hline
$19$ & \href{https://www.lmfdb.org/EllipticCurve/Q/361.a1/}{361.a1} & \href{https://www.lmfdb.org/EllipticCurve/Q/361.a2/}{361.a2} &  & \\\hline
$21$ & \href{https://www.lmfdb.org/EllipticCurve/Q/162.b1/}{162.b1} & \href{https://www.lmfdb.org/EllipticCurve/Q/162.b2/}{162.b2} & \href{https://www.lmfdb.org/EllipticCurve/Q/162.b4/}{162.b4}  & \href{https://www.lmfdb.org/EllipticCurve/Q/162.b3/}{162.b3} \\\hline
$27$ & \href{https://www.lmfdb.org/EllipticCurve/Q/27.a1/}{27.a1} & \href{https://www.lmfdb.org/EllipticCurve/Q/27.a3/}{27.a3} & \href{https://www.lmfdb.org/EllipticCurve/Q/27.a4/}{27.a4} & \href{https://www.lmfdb.org/EllipticCurve/Q/27.a2/}{27.a2}\\\hline
$37$ & \href{https://www.lmfdb.org/EllipticCurve/Q/1225.b1/}{1225.b1} & \href{https://www.lmfdb.org/EllipticCurve/Q/1225.b2/}{1225.b2} &  & \\\hline
$43$ & \href{https://www.lmfdb.org/EllipticCurve/Q/1849.b1/}{1849.b1} & \href{https://www.lmfdb.org/EllipticCurve/Q/1849.b2/}{1849.b2} &  & \\\hline
$67$ & \href{https://www.lmfdb.org/EllipticCurve/Q/4489.b1/}{4489.b1} & \href{https://www.lmfdb.org/EllipticCurve/Q/4489.b2/}{4489.b2} &  & \\\hline
$163$ & \href{https://www.lmfdb.org/EllipticCurve/Q/26569.a1/}{26569.a1} & \href{https://www.lmfdb.org/EllipticCurve/Q/26569.a2/}{26569.a2} &  &
\end{tabular}
\end{center}
\endgroup}

We note that $\{[\mathcal{C}_{11,i}(1)]_\mathbb{Q}\}_{i=1}^2$ and $\{[\mathcal{C}_{11,i}(1)]_\mathbb{Q}\}_{i=3}^4$ correspond to the isogeny classes \href{https://www.lmfdb.org/EllipticCurve/Q/121/a/}{121.a} and \href{https://www.lmfdb.org/EllipticCurve/Q/121/b/}{121.b}, respectively. For $n\neq 11$, we have that $\{[\mathcal{C}_{n,i}(1)]_\mathbb{Q}\}_{i}$ corresponds to the isogeny class determined by $n$ in \eqref{isogen1}.
The following result now follows from our discussion:

\begin{lemma}\label{LemmaisographsCM}
Let $n$ be an integer such that $X_{0}\!\left(  n\right)  $ has positive
genus. If $E/\mathbb{Q}$ is an elliptic curve with isogeny class degree $n$, then there exists a squarefree integer $d$ such that the isogeny class of $E$ is 
\[
\begin{array}
[c]{cl}
\{  [  \mathcal{C}_{n,i}(  d)  ]  _{\mathbb{Q}
}\}_{i=1}^{2}  \text{ or }\{  [  \mathcal{C}_{n,i}(  d)
]  _{\mathbb{Q}}\}_{i=3}^{4}   & \text{if }n=11,\\
\{  [  \mathcal{C}_{n,i}(  d)  ]  _{\mathbb{Q}}\}     & \text{otherwise.}
\end{array}
\]
Moreover, the isogeny graph and isogeny matrix of $E$ are as given in Table~\ref{isographsCM}.
\end{lemma}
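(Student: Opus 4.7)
The plan is to synthesize into a formal argument the classification of non-cuspidal $\mathbb{Q}$-rational points of $X_0(n)$ reviewed immediately preceding the lemma. First, I would observe that since $E/\mathbb{Q}$ has isogeny class degree $n$ with $X_0(n)$ of positive genus, $E$ must admit a $\mathbb{Q}$-rational $n$-isogeny $\pi$, and thus the pair $(E,\ker\pi)$ determines a non-cuspidal $\mathbb{Q}$-rational point on $X_0(n)$. By the cited work of Mazur and others, for the eleven values of $n$ for which such points exist, the $\mathbb{Q}$-rational non-cuspidal points fall, up to quadratic twist, into the twelve $\mathbb{Q}$-isogeny classes tabulated in \eqref{isogen1}.

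Next, I would verify that none of the $j$-invariants appearing in the twelve classes of \eqref{isogen1} equals $0$ or $1728$; this can be checked directly from the LMFDB entries (or noted from the fact that the CM $j$-invariants $0$ and $1728$ do not appear among the non-cuspidal points of $X_0(n)$ for $n\geq 11$ of positive genus admitting a $\mathbb{Q}$-rational point). Consequently, the appropriate twisting equivalence in every case is quadratic twist, rather than a sextic or quartic twist. Since the models $\mathcal{C}_{n,i}(1)$ of Table~\ref{ta:curves1} are, by construction, explicit representatives of the $30$ distinct $\mathbb{Q}$-isomorphism classes arising from the minimal twists of the twelve isogeny classes in \eqref{isogen1}, and since $\mathcal{C}_{n,i}(d)$ is the quadratic twist of $\mathcal{C}_{n,i}(1)$ by $d$, there must exist a squarefree integer $d$ and an index $i$ such that $E$ is $\mathbb{Q}$-isomorphic to $\mathcal{C}_{n,i}(d)$.

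Finally, I would conclude by using the fact, noted in the preliminaries, that the isogeny class degree is preserved under quadratic twist and that the formation of isogeny classes commutes with quadratic twist: if $E\cong_\mathbb{Q}\mathcal{C}_{n,i}(d)$, then the $\mathbb{Q}$-isogeny class of $E$ is precisely the quadratic twist by $d$ of the $\mathbb{Q}$-isogeny class of $\mathcal{C}_{n,i}(1)$. For $n\neq 11$, the family $\{\mathcal{C}_{n,i}(1)\}_i$ enumerates a single $\mathbb{Q}$-isogeny class, giving the stated form $\{[\mathcal{C}_{n,i}(d)]_\mathbb{Q}\}_i$. The isogeny graph and matrix of $E$ then coincide with those of the corresponding representative, which have been compiled into Table~\ref{isographsCM}. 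The main bookkeeping obstacle is the case $n=11$: the two distinct $\mathbb{Q}$-isogeny classes \href{https://www.lmfdb.org/EllipticCurve/Q/121/a/}{121.a} and \href{https://www.lmfdb.org/EllipticCurve/Q/121/b/}{121.b} share no elliptic curves even after extending to $\overline{\mathbb{Q}}$-isomorphism, which forces the split of $\{[\mathcal{C}_{11,i}(d)]_\mathbb{Q}\}$ into the two alternatives $\{[\mathcal{C}_{11,i}(d)]_\mathbb{Q}\}_{i=1}^2$ and $\{[\mathcal{C}_{11,i}(d)]_\mathbb{Q}\}_{i=3}^4$ in the statement; this must be verified directly by comparing the $j$-invariants of the representatives in the two classes.
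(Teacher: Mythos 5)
Your proposal follows the same route as the paper, which in fact offers no formal proof at all: the lemma is asserted to "follow from our discussion" of the classification of non-cuspidal rational points on the positive-genus $X_0(n)$ together with the table identifying each $\mathcal{C}_{n,i}(1)$ with an explicit LMFDB curve. Your attempt to formalize that discussion is therefore in the right spirit, but one step is factually wrong and leaves a genuine gap.

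You claim that none of the $j$-invariants occurring in the twelve isogeny classes of \eqref{isogen1} equals $0$ or $1728$, and you use this to conclude that ``up to twist'' always means ``up to quadratic twist.'' This fails for $n=27$: the isogeny class \href{https://www.lmfdb.org/EllipticCurve/Q/27/a/}{27.a} contains the curves 27.a3 and 27.a4, i.e.\ $\mathcal{C}_{27,2}(1):y^2=x^3-432$ and $\mathcal{C}_{27,3}(1):y^2=x^3+16$, both of which have $j=0$. (Your parenthetical alternative is also false: $j=0$ certainly occurs among the non-cuspidal rational points of $X_0(27)$, since $y^2+y=x^3$ admits a rational $27$-isogeny.) For a curve $E$ with $j(E)=0$, being a twist of $\mathcal{C}_{27,2}(1)$ a priori only means being a sextic twist, so your argument does not yet produce a quadratic twist parameter $d$. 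The gap is repairable: since the isogeny class degree of $E$ is $27$, the isogeny class of $E$ also contains a curve $E'$ with $j(E')=-12288000\notin\{0,1728\}$; your quadratic-twist argument applies to $E'$ and identifies it with $\mathcal{C}_{27,1}(d)$ or $\mathcal{C}_{27,4}(d)$ for some squarefree $d$, and the fact that quadratic twisting by $d$ carries the entire isogeny class $\{[\mathcal{C}_{27,i}(1)]_{\mathbb{Q}}\}_i$ bijectively onto the isogeny class of $E'$ (which is that of $E$) then forces $E\cong_{\mathbb{Q}}\mathcal{C}_{27,i}(d)$ for some $i$. A second, minor slip: a curve of isogeny class degree $n$ need not itself admit a rational $n$-isogeny (the two middle vertices of the $27$-chain admit only isogenies of degree at most $9$ from themselves), so your opening sentence should instead assert that \emph{some member of the isogeny class of $E$} admits a rational $n$-isogeny and hence furnishes the non-cuspidal point on $X_0(n)$; this is all the rest of the argument needs.
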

{\begingroup
\renewcommand{\arraystretch}{1.4}
 \begin{longtable}{ccc}
 	\caption{Isogeny graph and matrix of $\{
[  \mathcal{C}_{n,i}(t,d)  ]_\mathbb{Q} \}  _{i}$}\label{isographsCM}\\
	\hline
	$n$  & Isogeny Graph & Isogeny Matrix \\
	\hline

	\endfirsthead
	\hline
	$n$  & Isogeny Graph & Isogeny Matrix\\
	\hline
	\endhead
	\hline

	\multicolumn{3}{r}{\emph{continued on next page}}
	\endfoot
	\hline
	\endlastfoot

$11,17,19,37,43,67,163$& \begin{tikzcd}
\mathcal{C}_{n,1} \arrow[r, "n", no head] & \mathcal{C}_{n,2}
\end{tikzcd} & \resizebox{.08\textwidth}{!}{$\left(
\begin{array}
[c]{cc}%
1 & n\\
n & 1
\end{array}
\right)  $}	\\\hline

$\begin{array}
[c]{c}%
14,15,21 \\
n=pq,\\
\text{where }1<p<q
\end{array}$ & \begin{tikzcd}
\mathcal{C}_{n,1} \arrow[r, no head, "p"] \arrow[d,no head, "q"] & \mathcal{C}_{n,2} \arrow[d,no head, "q"] \\
\mathcal{C}_{n,3} \arrow[r, no head,"p"]               & \mathcal{C}_{n,4}               
\end{tikzcd} & \resizebox{.15\textwidth}{!}{$\left(
\begin{array}
[c]{cccc}%
1 & p & q & n\\
p & 1 & n & q\\
q & n & 1 & p\\
n & q & p & 1
\end{array}
\right)  $} \\\hline

$27$ & \begin{tikzcd}
\mathcal{C}_{n,1} \arrow[r, no head, "3"] & \mathcal{C}_{n,2} \arrow[r,no head, "3"] & \mathcal{C}_{n,3} \arrow[r,no head, "3"] & \mathcal{C}_{n,4}
\end{tikzcd} &\resizebox{.12\textwidth}{!}{$\left(
\begin{array}
[c]{cccc}%
1 & 3 & 9 & 27\\
3 & 1 & 3 & 9\\
9 & 3 & 1 & 3 \\
27 & 9 & 3 & 1
\end{array}
\right)  $} \\\hline

\end{longtable}
\endgroup}

Our next result considers the case when $\pi:E_{1}\rightarrow E_{2}$ is an
$n$-isogeny of elliptic curves over~$\mathbb{Q}$, such that $j\!\left(  E_{1}\right)  =j\!\left(  E_{2}\right)  \in\left\{
0,1728\right\}  $.

\begin{lemma}\label{j0or1728}
Let $E_{1}$ and $E_{2}$ be elliptic curves defined over $\mathbb{Q}$. Suppose further that $j\!\left(  E_{1}\right)  =j\!\left(  E_{2}\right)
\in\left\{  0,1728\right\}  $ and that $\pi:E_{1}\rightarrow E_{2}$ is a $\mathbb{Q}$-rational $n$-isogeny for $n>1$. Let%
\[
m=\left\{
\begin{array}
[c]{cl}%
6 & \text{if }j\!\left(  E_{1}\right)  =1728,\\
4 & \text{if }j\!\left(  E_{1}\right)  =0.
\end{array}
\right.
\]
Then there is a $d\in\mathbb{Q}^{\times}/\left(\mathbb{Q}^{\times}\right)  ^{m}$ such that $E_{i}$ is $\mathbb{Q}$-isomorphic to $\mathcal{C}_{2,i}^{1728}\!\left(  d\right)  $ or
$\mathcal{C}_{3,i}^{0}\!\left(  d\right)  $ where%
\begin{align*}
\mathcal{C}_{2,1}^{1728}\!\left(  d\right)    & :y^{2}=x^{3}-dx, \qquad & \mathcal{C}_{3,1}^{0}\!\left(  d\right)    & :y^{2}=x^{3}+16d, \\
\mathcal{C}_{2,2}^{1728}\!\left(  d\right)    & :y^{2}=x^{3}+4d, \qquad
& \mathcal{C}_{3,2}^{0}\!\left(  d\right)    & :y^{2}=x^{3}-432d.
\end{align*}

\end{lemma}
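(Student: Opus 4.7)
The plan is to show that the hypothesis $j(E_1)=j(E_2)\in\{0,1728\}$, together with the $\mathbb{Q}$-rational isogeny $\pi\colon E_1\to E_2$, forces $E_2$ to be the image of $E_1$ under the standard $2$-isogeny (when $j=1728$) or the standard $3$-isogeny (when $j=0$). The explicit models then drop out of a V\'{e}lu's formula calculation, and a single twist parameter $d$ describes both curves simultaneously.

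First I would determine which isogeny structures can actually occur. For $X_0(n)$ of genus $0$, Theorem~\ref{thmKF} produces $t\in\mathbb{Q}$ with $j(E_i)=j_{n,i}(t)$, and Lemma~\ref{thelemma} enumerates every $\overline{K}$-valued $t$ at which $j_{n,1}(t)=j_{n,2}(t)\in\{0,1728\}$. Of these, only $t=-64$ (giving $n=2$, $j=1728$) and $t=-27$ (giving $n=3$, $j=0$) are rational, since in the remaining cases $t$ must satisfy an irreducible quadratic over $\mathbb{Q}$. For $X_0(n)$ of positive genus, inspection of the twelve isogeny classes in~\eqref{isogen1} together with their isogeny graphs in Table~\ref{isographsCM} shows that only the class 27.a contains a pair of $\mathbb{Q}$-isogenous curves with a common $j$-invariant in $\{0,1728\}$, namely the two $j=0$ curves linked by a $3$-isogeny inside the length-four chain. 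Combining these observations with Tables~\ref{isographs} and~\ref{isographsCM}, I conclude that the $\mathbb{Q}$-isogeny class of $E_1$ contains exactly one further curve with the same $j$-invariant as $E_1$, related to $E_1$ by a $2$-isogeny if $j=1728$ and by a $3$-isogeny if $j=0$.

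Next I would exhibit the explicit models. Since $j(E_1)\in\{0,1728\}$, the curve $E_1$ has a standard short Weierstrass form: if $j(E_1)=1728$, then $E_1\cong y^2=x^3-dx=\mathcal{C}_{2,1}^{1728}(d)$ for some $d\in\mathbb{Q}^\times/(\mathbb{Q}^\times)^{4}$, and if $j(E_1)=0$, then $E_1\cong y^2=x^3+16d=\mathcal{C}_{3,1}^{0}(d)$ for some $d\in\mathbb{Q}^\times/(\mathbb{Q}^\times)^{6}$. Applying V\'{e}lu's formula to the $2$-isogeny of $y^2=x^3-dx$ with kernel $\{\mathcal{O},(0,0)\}$ yields the image $y^2=x^3+4dx$, matching $\mathcal{C}_{2,2}^{1728}(d)$. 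Applying the classical $3$-isogeny formula that sends $y^2=x^3+B$ to $y^2=x^3-27B$, with $B=16d$, yields $y^2=x^3-432d=\mathcal{C}_{3,2}^{0}(d)$. By the structural reduction of the previous paragraph, this image is the unique other curve of the same $j$-invariant in the isogeny class of $E_1$, and is therefore $\mathbb{Q}$-isomorphic to $E_2$, so the same parameter $d$ works for both curves.

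The main obstacle is the structural reduction itself: ruling out additional $j=1728$ (respectively $j=0$) curves in the $\mathbb{Q}$-isogeny class of $E_1$. Once this uniqueness is extracted from Lemma~\ref{thelemma}, the isogeny-graph classification in Tables~\ref{isographs} and~\ref{isographsCM}, and the positive-genus list in~\eqref{isogen1}, the identification of $E_2$ with the explicit $2$- or $3$-isogeny image of $E_1$ is a short V\'{e}lu computation.
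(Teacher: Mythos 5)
Your overall strategy tracks the paper's: first force the isogeny degree down to $2$ (when $j=1728$) or $3$ (when $j=0$) using the Fricke parameterizations, then pin down explicit models up to a single common twist parameter. You diverge in the second half: the paper anchors the $d=1$ curves to the LMFDB classes 32.a and 27.a and invokes \cite[Proposition X.5.4]{MR2514094} to transfer one twist parameter to both curves, whereas you write $E_1$ in standard form and compute $E_2$ directly as a V\'elu image. Your route is more self-contained, you treat the positive-genus levels explicitly (the paper's proof passes over them), and your computation in fact exposes two apparent typos in the statement: the V\'elu image of $y^2=x^3-dx$ is $y^2=x^3+4dx$, not $y^2=x^3+4d$ (which has $j=0$; the paper's own identification of $\mathcal{C}_{2,2}^{1728}(1)$ with a conductor-$32$ curve of $j$-invariant $1728$ confirms the missing $x$), and the exponents should be $m=4$ for $j=1728$ and $m=6$ for $j=0$, as you use and as the paper's own Section~2 conventions dictate, rather than the other way around.

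Two steps need tightening. First, you attribute the enumeration of $\left\{t\in\mathbb{Q}\mid j_{n,1}(t)=j_{n,2}(t)\in\{0,1728\}\right\}$ to Lemma~\ref{thelemma}, but that lemma proves only one containment: it lists the $t$ at which $\mathcal{C}_{n,i}(t,1)$ degenerates and shows that \emph{those} $t$ satisfy $j_{n,1}(t)=j_{n,2}(t)\in\{0,1728\}$; it does not assert that no other $t$ does. The equality you need is a separate (finite, easy) verification --- it is exactly the paper's displayed set \eqref{j01728isodeg} --- so it must be carried out or cited as such, not deduced from Lemma~\ref{thelemma}. Second, your identification of $E_2$ with the V\'elu image rests on the claim that the isogeny class of $E_1$ contains exactly one further class with the same $j$-invariant; as justified by appeal to Tables~\ref{isographs} and~\ref{isographsCM} this is circular, since the statement that every rational isogeny class is one of those families is Theorem~\ref{Thm2}, whose proof uses the present lemma. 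A non-circular repair is short: once $n=2$ (resp.\ $n=3$), $\ker\pi$ is a rational order-$2$ (resp.\ order-$3$) subgroup of $y^2=x^3-dx$ (resp.\ $y^2=x^3+16d$), and among the candidate kernels only $\langle(0,0)\rangle$ (resp.\ the subgroup supported on $x=0$) yields a quotient of $j$-invariant $1728$ (resp.\ $0$); the remaining kernels give quotients with $j=287496$ (resp.\ $j=-12288000$), contradicting the hypothesis. With those two repairs your argument is complete and is a legitimate alternative to the paper's use of \cite[Proposition X.5.4]{MR2514094}.
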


\begin{proof}
We begin by observing that
\begin{equation}\label{j01728isodeg}
\left\{  t\in\mathbb{Q}\mid j_{n,1}\!\left(  t\right)  =j_{n,2}\!\left(  t\right)  \in\left\{
0,1728\right\}  \right\}  =\left\{
\begin{array}
[c]{cl}%
\left\{  -64\right\}   & \text{if }n=2,\\
\left\{  -27\right\}   & \text{if }n=3,\\
\varnothing & \text{otherwise.}%
\end{array}
\right.
\end{equation}
In particular, the isogeny $\pi:E_{1}\rightarrow E_{2}$ has degree
$n\in\left\{  2,3\right\}  $. Now observe that $\mathcal{C}_{2,1}%
^{1728}\!\left(  1\right)  $, $\mathcal{C}_{2,2}^{1728}\!\left(  1\right)  $,
$\mathcal{C}_{3,1}^{0}\!\left(  1\right)  $, and $\mathcal{C}_{3,2}%
^{0}\!\left(  1\right)  $ are $\mathbb{Q}$-isomorphic to the following elliptic curves (given by their LMFDB label):
\href{https://www.lmfdb.org/EllipticCurve/Q/32.a3/}{32.a3}, \href{https://www.lmfdb.org/EllipticCurve/Q/32.a4/}{32.a4}, \href{https://www.lmfdb.org/EllipticCurve/Q/27.a4/}{27.a4}, and \href{https://www.lmfdb.org/EllipticCurve/Q/27.a3/}{27.a3}, respectively. In particular, $\mathcal{C}%
_{2,1}^{1728}\!\left(  1\right)  $ (resp. $\mathcal{C}_{3,1}^{0}\!\left(
1\right)  $) and $\mathcal{C}_{2,2}^{1728}\!\left(  1\right)  $ (resp. $\mathcal{C}%
_{3,2}^{0}\!\left(  1\right)  $) are $2$ (resp. $3$)-isogenous. It follows by
\cite[Proposition X.5.4]{MR2514094} that there is a $d\in\mathbb{Q}^{\times}/\left(\mathbb{Q}^{\times}\right)  ^{m}$ such that $E_{i}$ is $\mathbb{Q}$-isomorphic to $\mathcal{C}_{2,i}^{1728}\!\left(  d\right)  $ (resp.
$\mathcal{C}_{3,i}^{0}\!\left(  d\right)  $).
\end{proof}

Before proving Theorem~\ref{Thm2}, it remains to show that for a fixed $n>1$ such that
$X_{0}\!\left(  n\right)  $ has genus $0$, the families of elliptic curves
$\left\{  \mathcal{C}_{n,i}\!\left(  t,1\right)  \right\}  _{i}$ are pairwise
non-isomorphic over $\mathbb{Q}$. This is established by the following lemma:

\begin{lemma}
\label{LemmaQJ}Let $n>1$ be an integer such that $X_{0}\!\left(  n\right)  $ has genus $0$.
If $t,d\in\mathbb{Q}$ and $\mathcal{C}_{n,1}\!\left(  t,d\right)  $ is an elliptic curve, then the
elliptic curves $\mathcal{C}_{n,i}\!\left(  t,d\right)  $ are pairwise
non-isomorphic over $\mathbb{Q}$.
\end{lemma}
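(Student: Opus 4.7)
The plan is to reduce to the case $d = 1$ and then distinguish the curves $\mathcal{C}_{n,i}(t,1)$ via their $j$-invariants, handling the finite set of exceptional rational specializations by a direct twist computation.

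First, since $\mathcal{C}_{n,i}(t,d)$ is by construction the quadratic twist of $\mathcal{C}_{n,i}(t,1)$ by $d$, and twisting a pair of elliptic curves over $\mathbb{Q}$ by a common scalar is a bijection on $\mathbb{Q}$-isomorphism classes (its inverse is twisting by the same $d$ again, since $d^{2} \in (\mathbb{Q}^{\times})^{2}$), one has $\mathcal{C}_{n,i}(t,d) \cong_{\mathbb{Q}} \mathcal{C}_{n,j}(t,d)$ if and only if $\mathcal{C}_{n,i}(t,1) \cong_{\mathbb{Q}} \mathcal{C}_{n,j}(t,1)$. Hence I may assume $d = 1$.

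Next, for each $n$ with $X_{0}(n)$ of genus $0$ and each pair $i \neq j$ of indices appearing in the isogeny graph of $\mathcal{C}_{n,1}(t,d)$ in Table~\ref{isographs}, I would verify by computer algebra (in the style of the proofs of Proposition~\ref{thmcalC} and Lemma~\ref{thelemma}) that the $j$-invariants $j(\mathcal{C}_{n,i}(t,1))$ and $j(\mathcal{C}_{n,j}(t,1))$, computed from the Weierstrass models in Table~\ref{ta:curves}, are distinct as elements of $\mathbb{Q}(t)$. This is morally the assertion that the generic fibre of the family has no extra isogenies beyond those in Proposition~\ref{thmcalC}(iii); equivalently, the generic endomorphism ring is $\mathbb{Z}$, so isogenous but non-isomorphic members of the family cannot share a common $j$-invariant. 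The equation $j(\mathcal{C}_{n,i}(t,1)) = j(\mathcal{C}_{n,j}(t,1))$ is thus a non-trivial algebraic equation in $t$, whose solution set $S_{n,i,j} \subset \overline{\mathbb{Q}}$ is finite.

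For $t \in \mathbb{Q} \setminus S_{n,i,j}$, the curves $\mathcal{C}_{n,i}(t,1)$ and $\mathcal{C}_{n,j}(t,1)$ have distinct $j$-invariants and are therefore non-isomorphic even over $\overline{\mathbb{Q}}$. For any $t \in \mathbb{Q} \cap S_{n,i,j}$, the hypothesis that $\mathcal{C}_{n,1}(t,1)$ is an elliptic curve excludes $t \in R_{n}$ by Lemma~\ref{thelemma}; at each remaining exceptional rational value the two curves share a common $j$-invariant outside $\{0, 1728\}$, and one computes the quadratic twist ratio $\mathcal{A}_{n,j} \mathcal{B}_{n,i}/(\mathcal{A}_{n,i} \mathcal{B}_{n,j})$ directly from the Weierstrass coefficients and verifies that it is not a square in $\mathbb{Q}^{\times}$. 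When the common $j$-invariant happens to be $0$ or $1728$ (which occurs only at explicitly enumerable $t$), one analogously checks a fourth- or sixth-power twist class in $\mathbb{Q}^{\times}/(\mathbb{Q}^{\times})^{m}$ with $m \in \{4,6\}$.

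The main obstacle is this last step: systematically enumerating the finitely many exceptional rational $t$-values across all $n$ and all pairs of indices, and carrying out the twist-class computation at each. The amount of bookkeeping is substantial — especially for $n = 12, 16, 18$, where the isogeny graphs have eight vertices and correspondingly many pairs — but is mechanical and naturally fits into the SageMath verification framework used throughout the paper.
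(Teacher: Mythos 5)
Your proposal is correct and follows essentially the same route as the paper: the paper likewise defines the exceptional sets $S_{n,i,k}$ of rational $t$ where $J_{n,i}(t)=J_{n,k}(t)$, verifies by computer that these are empty for $n=5,8,10,12,13,16,18,25$, and for the remaining $n$ checks directly that the curves are non-isomorphic over $\mathbb{Q}$ at each exceptional $t$, with the reduction to $d=1$ left implicit. Your explicit treatment of the $d$-reduction and of the $j\in\{0,1728\}$ subcase is a faithful elaboration of the same argument.
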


\begin{proof}
Let $J_{n,i}\!\left(  t\right)  $ denote the $j$-invariant of $\mathcal{C}%
_{n,i}\!\left(  t,d\right)  $. Set
\[
S_{n,i,k}=\left\{  t\in\mathbb{Q}\mid\mathcal{C}_{n,i}\!\left(  t,1\right)  \text{ is an elliptic curve and
}J_{n,i}\!\left(  t\right)  =J_{n,k}\!\left(  t\right)  \right\}  .
\]
For $i\neq k$, \cite[Verification\_of\_Lemma\_5\_3]{GitHubExplicitIsogenies}
verifies that $S_{n,i,k}=\varnothing$ if $n=5$,$8$,$10$,$12$,$13$,$16$,$18$,$25$. For the
remaining cases, loc. cit. verifies that if $S_{n,i,k}\neq\varnothing$ with
$i\neq k$, then $\mathcal{C}_{n,i}\!\left(  t,1\right)  $ is not $\mathbb{Q}$-isomorphic to $\mathcal{C}_{n,k}\!\left(  t,1\right)  $ for each $t\in
S_{n,i,k}$. It follows that for each $t,d\in\mathbb{Q}$ such that $\mathcal{C}_{n,1}\!\left(  t,d\right)  $ is an elliptic curve,
the elliptic curves $\mathcal{C}_{n,i}\!\left(  t,d\right)  $ are pairwise
non-isomorphic over~$\mathbb{Q}$.
\end{proof}

\begin{proof}[Proof of Theorem~\ref{Thm2}.]
If $E/\mathbb{Q}$ has isogeny class degree $n>1$, then $X_{0}\!\left(  n\right)  $ has a non-cuspidal $\mathbb{Q}$-rational point. If $X_{0}\!\left(  n\right)  $ has positive genus, then the result follows by Lemma~\ref{LemmaisographsCM}. So
suppose $X_{0}\!\left(  n\right)  $ has genus~$0$. Since $E$ has isogeny class
degree $n$, there are $n$-isogenous elliptic curves $E_{1}$ and $E_{2}$
defined over $\mathbb{Q}$ such that $\left[  E_{i}\right]  _{\mathbb{Q}}\in\mathcal{I}_{E/\mathbb{Q}}$.
If the $j$-invariants of $E_{1}$ and $E_{2}$ are not both identically $0$ or
$1728$, then Theorem \ref{CorX0} implies that there is a $t\in\mathbb{Q}$ and $d\in\mathbb{Q}^{\times}/(\mathbb{Q}^{\times})^{2}$ such that the isogeny class of $E_{i}$ contains $\{
[  \mathcal{C}_{n,i}\!\left(  t,d\right)  ]  _{\mathbb{Q}}\}_i  $. By Lemma \ref{LemmaQJ},
\[
\#\left\{  \left[  \mathcal{C}_{n,i}\!\left(  t,d\right)  \right]  _{\mathbb{Q}}\right\}_i  =\left\{
\begin{array}
[c]{cl}%
2 & \text{if }n=2,3,5,7,13,\\
3 & \text{if }n=9,25,\\
4 & \text{if }n=4,6,10,\\
6 & \text{if }n=8,18,\\
8 & \text{if }n=12,16.
\end{array}
\right.
\]
In particular, $\mathcal{I}_{E}=\{  [  \mathcal{C}_{n,i}\!\left(
t,d\right)  ]  _{\mathbb{Q}}\}_i  $ \cite[Theorem 1.2]{MR4203041}.

Now suppose that the $j$-invariant of $E_{1}$ and $E_{2}$ are both identically
$0$ or $1728$. By \eqref{j01728isodeg}, we have that $n$ is either $2$ or $3$. By Lemma~\ref{j0or1728}, we conclude that if $n=2$ (resp. $n=3$), 
then there is a $d \in \mathbb{Q}^{\times}/(\mathbb{Q}^{\times})^{m}$  such that $E_{i}$ is $\mathbb{Q}$-isomorphic $\mathcal{C}_{2,i}^{1728}\!\left(  d\right)  $ (resp. $\mathcal{C}_{3,i}^{0}\!\left(  d\right)  $). In particular, the isogeny class of $E$ is $\mathcal{I}_{E}=\{ [  \mathcal{C}_{2,i}^{1728}\!\left(  d\right)
]  _{\mathbb{Q}}\}_i  $ (resp. $\mathcal{I}
_{E}=\{  [  \mathcal{C}_{3,i}^{0}\!\left(  d\right)  ]  _{\mathbb{Q}}\}_i  $).
\end{proof}

\section{Algorithms for Computing Isogeny Classes}\label{sec:algorithms}

Let $\ell$ be a prime such that $X_{0}\!\left(  \ell\right)  $ has genus $0$.
Suppose further that $K$ is a field of characteristic not equal to $2,3,$ or
$\ell$. For an elliptic curve $E$ defined over $K$, Tsukazaki \cite[Algorithms
2,3,4]{MR3389382} provided algorithms that output elliptic curves that are $\ell$-isogenous
to $E$ over $K$. The algorithm builds on the work of Cremona and Watkins by
computing the kernel polynomial of a $K$-rational $\ell$-isogeny admitted by
$E$. The Weierstrass model of the $\ell$-isogenous elliptic curves are then
obtained via Kohel's formulas \cite{MR2695524}. We note that Algorithms 2, 3,
and 4 of Tsukazaki correspond to the cases when $j\!\left(  E\right)
\neq0,1728$, $j\!\left(  E\right)  =0$, and $j\!\left(  E\right)  =1728$,
respectively. 

In this section, we use Theorem \ref{CorX0} to extend \cite[Algorithm
2]{MR3389382} in the following manner: Let $n>1$ be an integer such that
$X_{0}\!\left(  n\right)  $ has genus $0$, and let $K$ be a field of
characteristic $0$ or relatively prime to $6n$. If $E/K$ is an elliptic curve
with $j\!\left(  E\right)  \neq0,1728$, then Algorithm \ref{Algor} below
outputs a set consisting of elliptic curves that are
$m$-isogenous to $E$ over $K$, where $m$ divides $n$.

To present our algorithm, let $\mathcal{A}_{n,i}\!\left(  t,d\right)  $,
$\mathcal{B}_{n,i}\!\left(  t,d\right)  $, and $\mathcal{C}_{n,i}\!\left(
t,d\right)  $ be as defined in Table \ref{ta:curves}. Set $m_{n}$ to be the
quantity defined below:%
\[%
\begin{tabular}
[c]{cccccc}%
$n$ & $2,3,5,7,13,17,19,37,43,67,163$ & $9,25$ & $4,6,10,11,14,15,21,27$ &
$8,18$ & $12,16$\\\hline
$m_{n}$ & $2$ & $3$ & $4$ & $6$ & $8$%
\end{tabular}
\]
For $n>1$ an integer with $X_{0}\!\left(  n\right)  \!\left(
\mathbb{Q}
\right)  $ having a non-cuspidal point, define
\[
J_{n,i}\!\left(  t\right)  =\left\{
\begin{array}
[c]{cl}%
j\!\left(  \mathcal{C}_{n,i}\!\left(  t,1\right)  \right)   & \text{if }%
X_{0}\!\left(  n\right)  \text{ has genus }0\text{ and }1\leq i\leq m_{n},\\
j\!\left(  \mathcal{C}_{n,i}\!\left(  1\right)  \right)   & \text{if }%
X_{0}\!\left(  n\right)  \text{ has positive genus and }1\leq
i\leq m_{n}.
\end{array}
\right.
\]
Now recall the following result:
\begin{lemma}
[{\cite[Exercise 10.21]{MR2514094}}]\label{Lemmatwist}Let $K$ be a field of
characteristic not equal to $2$ or $3$ and let $E$ and $E^{\prime}$ be
elliptic curves defined over $K$ such that $j\!\left(  E\right)  =j\!\left(
E^{\prime}\right)  \not \in \left\{  0,1728\right\}  $. Let $E:y^{2}%
=x^{3}+Ax+B$ for some $A,B\in K$ and let $c_{4}$ and $c_{6}$ denote the
invariants of the model of $E^{\prime}$. Then $E^{\prime}$ is $K$-isomorphic
to $E^{d}$, where $E^{d}$ is the quadratic twist of $E$ by $d=\frac{Bc_{4}%
}{2Ac_{6}}$.
\end{lemma}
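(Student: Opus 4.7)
The plan is to exploit the fact that $j(E)=j(E')\notin\{0,1728\}$ forces every twist of $E$ to be a quadratic twist, and then to identify the twisting parameter by a direct computation in short Weierstrass form. By the standard twist classification (Silverman, Proposition X.5.4), there exists $d_0\in K^{\times}$, unique modulo $(K^{\times})^2$, such that $E'$ is $K$-isomorphic to $E^{d_0}:y^2=x^3+d_0^2Ax+d_0^3B$. The task then reduces to showing that the proposed quantity $\frac{Bc_4}{2Ac_6}$ lies in the same class of $K^{\times}/(K^{\times})^2$ as $d_0$.

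To carry this out, I would first put $E'$ in short Weierstrass form $y^2=x^3+A'x+B'$, so that by definition $c_4=-48A'$ and $c_6=-864B'$. Because both $E'$ and $E^{d_0}$ are given by short Weierstrass models over a field of characteristic not $2$ or $3$, any $K$-isomorphism between them must have the form $(x,y)\mapsto(u^2x,u^3y)$ for some $u\in K^{\times}$ (no translations occur because translating would reintroduce the omitted lower-order terms). This yields the relations $A'=u^{-4}d_0^2A$ and $B'=u^{-6}d_0^3B$, which is the one structural input needed; I would simply cite it from the classification of admissible changes of variables for Weierstrass equations.

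Substituting these into $\frac{Bc_4}{2Ac_6}=\frac{-48A'B}{-1728AB'}=\frac{A'B}{36AB'}$ and simplifying produces $\frac{u^2}{36d_0}$. Since $\frac{u^2}{36d_0}=d_0\cdot\left(\frac{u}{6d_0}\right)^2$, this representative differs from $d_0$ by a square in $K^{\times}$, hence defines the same quadratic twist of $E$. Therefore $E^{Bc_4/(2Ac_6)}\cong_K E^{d_0}\cong_K E'$, as claimed. The argument is essentially one line of algebra once the twist classification has been invoked, and no serious obstacle is anticipated; the only subtlety worth flagging in the write-up is the uniqueness of $d_0$ modulo squares, which is exactly why the formula $\frac{Bc_4}{2Ac_6}$ is well-defined up to $(K^{\times})^2$ regardless of the chosen model of $E'$.
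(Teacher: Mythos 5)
Your proposal is correct. Note that the paper does not actually prove this lemma; it is quoted verbatim as Exercise 10.21 of Silverman with no argument supplied, so there is nothing to compare against beyond the citation. Your verification is the standard one and is sound: the reduction to a quadratic twist via Proposition X.5.4, the observation that a $K$-isomorphism between two short Weierstrass models in characteristic $\neq 2,3$ must be $(x,y)\mapsto(u^{2}x,u^{3}y)$, and the computation $\frac{Bc_{4}}{2Ac_{6}}=\frac{u^{2}}{36d_{0}}=d_{0}\bigl(\frac{u}{6d_{0}}\bigr)^{2}$ together establish the claim, and your closing remark correctly explains why the formula is independent of the chosen model of $E'$ (changing the model rescales $c_{4}/c_{6}$ by $u^{2}$, a square).
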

A consequence of this lemma is the following result:
\begin{corollary}
Let $n>1$ be an integer such that $X_{0}\!\left(  n\right)  $ has genus $0$.
Suppose further that $K$ is a field of characteristic $0$ or relatively prime
to $6n$ and that $E/K$ is an elliptic curve with $j\!\left(  E\right)
\neq0,1728$. If $j\!\left(  E\right)  =J_{n,i}\!\left(  t\right)  $ for some
$n,i,t$ with $t \in K$, then $E$ is $K$-isomorphic to $\mathcal{C}_{n,i}\!\left(
t,\frac{2\mathcal{B}_{n,i}\!\left(  t,1\right)  c_{4}}{\mathcal{A}%
_{n,i}\!\left(  t,1\right)  c_{6}}\right)  $, where $c_{4}$ and $c_{6}$ are
the invariants of the Weierstrass model of $E$.
\end{corollary}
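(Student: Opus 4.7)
The plan is to apply Lemma \ref{Lemmatwist} directly, after setting up the correct identification of curves. The key observation is that by the definition of $J_{n,i}(t)$ given just before the Lemma, when $X_{0}\!\left(n\right)$ has genus $0$ and $1\leq i \leq m_n$, we have $J_{n,i}\!\left(t\right) = j\!\left(\mathcal{C}_{n,i}\!\left(t,1\right)\right)$. Hence the hypothesis $j\!\left(E\right) = J_{n,i}\!\left(t\right)$ translates into $j\!\left(E\right) = j\!\left(\mathcal{C}_{n,i}\!\left(t,1\right)\right) \notin \{0,1728\}$.

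First, I would verify that $\mathcal{C}_{n,i}\!\left(t,1\right)$ is genuinely an elliptic curve for this value of $t$. This follows from Lemma~\ref{thelemma}: since $j\!\left(\mathcal{C}_{n,i}\!\left(t,1\right)\right)$ equals $j\!\left(E\right) \neq 0, 1728$, we cannot have $\mathcal{C}_{n,i}\!\left(t,1\right)$ singular, for otherwise the $j$-invariant would be $0$ or $1728$. So the reference curve $\mathcal{C}_{n,i}\!\left(t,1\right) : y^2 = x^3 + \mathcal{A}_{n,i}\!\left(t,1\right) x + \mathcal{B}_{n,i}\!\left(t,1\right)$ is well-defined, and $\mathcal{A}_{n,i}\!\left(t,1\right), \mathcal{B}_{n,i}\!\left(t,1\right) \in K^{\times}$ (since $j \neq 0, 1728$).

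Next, I apply Lemma~\ref{Lemmatwist} with $E$ of the Lemma taken to be $\mathcal{C}_{n,i}\!\left(t,1\right)$ (so $A = \mathcal{A}_{n,i}\!\left(t,1\right)$ and $B = \mathcal{B}_{n,i}\!\left(t,1\right)$) and with $E'$ of the Lemma taken to be the curve $E$ of the Corollary (with invariants $c_4, c_6$). The Lemma then yields that $E$ is $K$-isomorphic to the quadratic twist $\mathcal{C}_{n,i}\!\left(t,1\right)^{d'}$ where $d' = \frac{\mathcal{B}_{n,i}\!\left(t,1\right) c_4}{2\,\mathcal{A}_{n,i}\!\left(t,1\right) c_6}$. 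By the notational convention (recorded in the Introduction) that $\mathcal{C}_{n,i}\!\left(t,d\right)$ is the quadratic twist of $\mathcal{C}_{n,i}\!\left(t,1\right)$ by $d$, we conclude $E \cong_K \mathcal{C}_{n,i}\!\left(t, d'\right)$.

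Finally, the cosmetic step: the expression given in the statement is $d = \frac{2\mathcal{B}_{n,i}\!\left(t,1\right) c_4}{\mathcal{A}_{n,i}\!\left(t,1\right) c_6} = 4 d'$, and since $4 = 2^2 \in \left(K^{\times}\right)^2$, the classes of $d$ and $d'$ in $K^{\times} / \left(K^{\times}\right)^2$ coincide, so $\mathcal{C}_{n,i}\!\left(t, d\right) \cong_K \mathcal{C}_{n,i}\!\left(t, d'\right) \cong_K E$. There is no real obstacle here; the only subtle point is recognizing that the stated $d$ differs from the output of Lemma~\ref{Lemmatwist} by the square factor $4$, so both representatives define the same quadratic twist.
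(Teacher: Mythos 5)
Your proof is correct and takes essentially the same route as the paper: identify $E$ as a quadratic twist of $\mathcal{C}_{n,i}\!\left(t,1\right)$ (using $j\!\left(E\right)\neq 0,1728$) and apply Lemma~\ref{Lemmatwist}. Your added observation that the stated twisting parameter $\frac{2\mathcal{B}_{n,i}\left(t,1\right)c_{4}}{\mathcal{A}_{n,i}\left(t,1\right)c_{6}}$ differs from the lemma's output $\frac{\mathcal{B}_{n,i}\left(t,1\right)c_{4}}{2\mathcal{A}_{n,i}\left(t,1\right)c_{6}}$ by the square factor $4$, and hence defines the same quadratic twist class, is a worthwhile clarification that the paper's one-line proof leaves implicit.
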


\begin{proof}
If $j\!\left(  E\right)  =J_{n,i}\!\left(  t\right)  $ for some $n,i,t$ with $t\in K$, then
$E$ is a twist of $\mathcal{C}_{n,i}\!\left(  t,1\right)  $. Since $j\!\left(
E\right)  \neq0,1728$, $E$ is a quadratic twist of $\mathcal{C}_{n,i}\!\left(
t,1\right)  $. Let $c_{4}$ and $c_{6}$ denote the invariants of $E$. By Lemma
\ref{Lemmatwist}, $E$ is $K$-isomorphic to $\mathcal{C}_{n,i}\!\left(
t,\frac{2\mathcal{B}_{n,i}\!\left(  t,1\right)  c_{4}}{\mathcal{A}%
_{n,i}\!\left(  t,1\right)  c_{6}}\right)  $.
\end{proof}

The following algorithm is now immediate from the above discussion and Theorem
\ref{CorX0}. We note that when $n$ is prime, this is a modified version of Algorithm~2 in \cite{MR3389382}. The main change, is that Kohel's formulas are no longer required to deduce the Weierstrass model of an $n$-isogenous elliptic curve.

\begin{algorithm}[H]    
    \caption{\texttt{isogenies\_genus\_0(E,n)}}\label{Algor}
    \begin{algorithmic}[1]
        \Require{$E$ with $j(E)\not \in \{0,1728\}$, $n$ with $X_0(n)$ having genus $0$ and $n>1$}
        \Ensure{ A set of $K$-rational $m$-isogenous elliptic curves to $E$, with $m|n$ }
        \State Compute $j(E)$ 
        \State Compute $R=\{t \in  K \mid j(E) = j_{n,1}(t) \}$ 
        \State Compute $c_4$, $c_6$ of $E$
        \State Let $T(t) = \frac{\mathcal{B}_{n,k_1}(t,1)c_4  }{2\mathcal{A}_{n,k_1}(t,1)c_6}$
         \State \textbf{return} $\{\mathcal{C}_{n,i}(t,T(t)) \mid t \in R, 1\leq i \leq m_n \}$

    \end{algorithmic}
\end{algorithm}

\begin{example}
Let $K=%
\mathbb{Q}
\!\left(  \sqrt{-3}\right)  $ and let $E:y^{2}=x^{3}-x^{2}+11x-3$ (LMFDB label
\href{https://www.lmfdb.org/EllipticCurve/2.0.3.1/92416.2/u/5}{2.0.3.1-92416.2-u5}). The isogeny class of $E$ over $K$ is $\left\{  \left[
E_{i}\right]  _{K}\mid1\leq i\leq5\right\}  $ where%
\begin{align*}
E_{1}  & :y^{2}=x^{3}-x^{2}-12309x+529757  \hspace{14em} \text{(\href{https://www.lmfdb.org/EllipticCurve/2.0.3.1/92416.2/u/1}{2.0.3.1-92416.2-u1}),}\\
E_{2}  & :y^{2}=x^{3}-x^{2}+\left(  1040\sqrt{-3}-549\right)  x+16976\sqrt
{-3}+10637 \hspace{3.3em}  \text{(\href{https://www.lmfdb.org/EllipticCurve/2.0.3.1/92416.2/u/2}{2.0.3.1-92416.2-u2}),}\\
E_{3}  & :y^{2}=x^{3}-x^{2}+\left(  -1040\sqrt{-3}-549\right)  x-16976\sqrt
{-3}+10637 \hspace{2.5em} \text{(\href{https://www.lmfdb.org/EllipticCurve/2.0.3.1/92416.2/u/3}{2.0.3.1-92416.2-u3}),}\\
E_{4}  & :y^{2}=x^{3}-x^{2}-149x+797 \hspace{16.6em}\text{(\href{https://www.lmfdb.org/EllipticCurve/2.0.3.1/92416.2/u/4}{2.0.3.1-92416.2-u4}),}\\
E_{5}  & :y^{2}=x^{3}-x^{2}+11x-3\hspace{18.1em}\text{(\href{https://www.lmfdb.org/EllipticCurve/2.0.3.1/92416.2/u/5}{2.0.3.1-92416.2-u5}).}
\end{align*}
Next, we apply Algorithm \ref{Algor} to $E$ with $n=9$. With notation as in
Algorithm \ref{Algor}, we observe that $R=\left\{  t_{1},t_{2},t_{3}\right\}
$ where $t_{1}=\frac{27\sqrt{-3}-51}{19},$ $t_{2}=\frac{-27\sqrt{-3}-51}{19}$,
and $t_{3}=-24$. The invariants $c_{4}$ and $c_{6}$ of $E$ are $c_{4}%
=c_{6}=-512$. Now set $d_{i}=T\!\left(  t_{i}\right)  $ for $i=1,2,3$ where
$d_{1}=\frac{1701\sqrt{-3}-891}{2888}$, $d_{2}=\frac{-1701\sqrt{-3}-891}%
{2888}$, and $d_{3}=\frac{-81}{4}$. Algorithm \ref{Algor} returns $\left\{
\mathcal{C}_{9,i}\!\left(  t_{i},d_{i}\right)  \mid1\leq i\leq3\right\}  $.
We note that%
\begin{align*}
\left[  E_{1}\right]  _{K}  & =\left[  \mathcal{C}_{9,3}\!\left(  t_{3}%
,d_{3}\right)  \right]  _{K},\\
\left[  E_{2}\right]  _{K}  & =\left[  \mathcal{C}_{9,3}\!\left(  t_{1}%
,d_{1}\right)  \right]  _{K},\\
\left[  E_{3}\right]  _{K}  & =\left[  \mathcal{C}_{9,3}\!\left(  t_{2}%
,d_{2}\right)  \right]  _{K},\\
\left[  E_{4}\right]  _{K}  & =\left[  \mathcal{C}_{9,2}\!\left(  t_{1}%
,d_{1}\right)  \right]  _{K}=\left[  \mathcal{C}_{9,2}\!\left(  t_{2}%
,d_{2}\right)  \right]  _{K}=\left[  \mathcal{C}_{9,2}\!\left(  t_{3}%
,d_{3}\right)  \right]  _{K},\\
\left[  E_{5}\right]  _{K}  & =\left[  \mathcal{C}_{9,1}\!\left(  t_{1}%
,d_{1}\right)  \right]  _{K}=\left[  \mathcal{C}_{9,1}\!\left(  t_{2}%
,d_{2}\right)  \right]  _{K}=\left[  \mathcal{C}_{9,1}\!\left(  t_{3}%
,d_{3}\right)  \right]  _{K}.
\end{align*}
In particular, the isogeny class $\mathcal{I}_{E/K}$ contains the set%
\[
\left\{  \left[  \mathcal{C}_{9,1}\!\left(  t_{1},d_{1}\right)  \right]
_{K},\left[  \mathcal{C}_{9,2}\!\left(  t_{1},d_{1}\right)  \right]
_{K},\left[  \mathcal{C}_{9,3}\!\left(  t_{1},d_{1}\right)  \right]
_{K},\left[  \mathcal{C}_{9,3}\!\left(  t_{2},d_{2}\right)  \right]
_{K},\left[  \mathcal{C}_{9,3}\!\left(  t_{3},d_{3}\right)  \right]
_{K}\right\}  .
\]
By Theorem \ref{CorX0} and the above, we have that the isogeny graph of $E/K$
contains the following subgraph:
\[
\begin{tikzcd}
                                        & E_1 \arrow[d, "3" description, no head]                                     &     \\
E_5 \arrow[r, "3" description, no head] & E_4 \arrow[r, "3" description, no head] \arrow[d, "3" description, no head] & E_2 \\
                                        & E_3                                                                         &    
\end{tikzcd}
\]

\noindent Lastly, we note that by LMFDB, the above isogeny graph is the isogeny graph of
$E$.
\end{example}

Next, let $J=\left\{  J_{n,i}\!\left(  t\right)  \mid X_{0}\!\left(  n\right)\text{ has positive genus and }1\leq i\leq m_{n}\right\}  \cup\left\{1728\right\}  $. When $K=\mathbb{Q}$, Algorithm \ref{Algor} can be modified using Theorem \ref{Thm2} to yield the isogeny class of a rational elliptic curve $E$ with $j\!\left(  E\right)\not \in J$. Specifically, we have:

\begin{algorithm}[H]    
    \caption{\texttt{isogeny\_class\_of\_E/$\mathbb{Q}$}}\label{Algor2}
    \begin{algorithmic}[1]
        \Require{$E$ with $j(E)\not \in J$}
        \Ensure{ Isogeny class of $E/ \mathbb{Q}$ }
        \State Compute $j(E)$ 
        \State Compute $R=\{(n,i,t) \in \mathbb{N} \times \mathbb{N} \times \mathbb{Q} \mid j(E) = J_{n,i}(t),1\leq i \leq m_n \}$ 
        \State Compute $S=\{ (n,i,t) \in R \mid n \geq n'$ for all $(n',i',t')\in R \}$
        \State Fix $(n,i,t) \in S$
        \State Compute $c_4$, $c_6$ of $E$
        \State Compute $T(n,i,t) = \frac{\mathcal{B}_{n,i}(t,1)c_4  }{2\mathcal{A}_{n,i}(t,1)c_6}$
        \State \textbf{return} $\{\mathcal{C}_{n,j}(t,T(n,i,t)) \mid 1\leq j \leq n_i \}$
    \end{algorithmic}
\end{algorithm}
\noindent We now demonstrate Algorithm~\ref{Algor2} by applying it to the elliptic curve $E:y^2=x^3-15x-22$ (LMFDB \cite{lmfdb} label \href{https://www.lmfdb.org/EllipticCurve/Q/144/a/2}{144.a2}). With notation as in the Algorithm, we observe that:
\[
S=\left\{  \left(  6,1,-6\right)  ,\left(
6,2,-12\right)  ,\left(  6,3,-6\right)  ,\left(  6,4,-12\right)  \right\}  .
\]
Now choose $\left(  6,1,-6\right)  \in S$ and observe that $T\!\left(
6,1,-6\right)  =-1$. Thus, $E$ is $\mathbb{Q}$-isomorphic to $\mathcal{C}_{6,1}\!\left(  -6,-1\right)  $ and we have that
the isogeny class of $E$ is $\{  [  \mathcal{C}_{6,i}\!\left(
-6,-1\right)  ]_{\mathbb{Q}}  \}  _{i=1}^{4}$ where the models of $\mathcal{C}_{6,i}(-6,-1)$ are given by:
\begin{align*}
\mathcal{C}_{6,1}\!\left(  -6,-1\right)    & :y^{2}=x^{3}- 19440x + 1026432,\\
\mathcal{C}_{6,2}\!\left(  -6,-1\right)    & :y^{2}=x^{3}+ 2985984,\\
\mathcal{C}_{6,3}\!\left(  -6,-1\right)    & :y^{2}=x^{3}- 174960x - 27713664,\\
\mathcal{C}_{6,4}\!\left(  -6,-1\right)    & :y^{2}=x^{3}-80621568.
\end{align*}
The isogeny graph of $E$ can then be deduced from Table~\ref{isographs}.

\vspace{1em}

\noindent \textbf{Acknowledgments.} The author would like to thank Imin Chen, Alyson Deines, Edray Goins, Enrique Gonz\'{a}lez-Jim\'{e}nez, Manami Roy, and Andrew V. Sutherland for their helpful comments on this work. The author also thanks the referee for their helpful comments and suggestions.

\begin{appendix}
\section{}
\label{AppendixTables}
\vspace{-0.5em}
{\renewcommand*{\arraystretch}{1.2} \begin{longtable}{ccC{2.15in}C{3.1in}}
\caption{The elliptic curve $\mathcal{C}_{n,i}(t,d):y^2=x^3+d^2\mathcal{A}_{n,i}x+d^3\mathcal{B}_{n,i}$}\\
\hline
$n$ & $i$ & $\mathcal{A}_{n,i}$ & $\mathcal{B}_{n,i}$\\
\hline
\endfirsthead
\caption[]{\emph{continued}}\\
\hline
$n$ & $i$ & $\mathcal{A}_{n,i}$ & $\mathcal{B}_{n,i}$ \\
\hline
\endhead
\hline
\multicolumn{4}{r}{\emph{continued on next page}}
\endfoot
\hline
\endlastfoot

$2$ & $1$ & $ -27  (t + 64)  (t + 256)  $ & $-54  (t - 512)  (t + 64)^{2} $ \\\cmidrule{2-4}
& $2$ & $-432  (t + 16)  (t + 64)   $ & $-3456  (t - 8)  (t + 64)^{2} $ \\\hline

$3$ & $1$ & $-3  (t + 243)    (t + 27)^{3} $ & $-2  (t + 27)^{4}  (t^{2} - 486 t - 19683) $   \\\cmidrule{2-4}
& $2$ & $-243  (t + 3)    (t + 27)^{3} $ & $-1458  (t + 27)^{4}  (t^{2} + 18 t - 27) $ \\\hline

$4$ & $1$ & $-432  (t^{2} + 16 t + 256) $  & $3456  (t - 16)  (t + 8)  (t + 32) $\\\cmidrule{2-4}
& $2$ & $ -6912  (t^{2} + 16 t + 16)$  & $ 221184  (t + 8)  (t^{2} + 16 t - 8)$  \\\cmidrule{2-4}
& $3$ &  $-432  (t^{2} - 224 t + 256) $  &  $3456  (t - 16)  (t^{2} + 544 t + 256) $\\\cmidrule{2-4}
& $4$ &  $-27  (t^{2} + 256 t + 4096) $ &  $54  (t + 32)  (t^{2} - 512 t - 8192) $ \\\hline

$5$ & $1$ & $-27    (t^{2} + 22 t + 125)  (t^{2} + 250 t + 3125) $ & $-54  (t^{2} - 500 t - 15625)  (t^{2} + 22 t + 125)^{2} $\\\cmidrule{2-4}
& $2$ & $-16875    (t^{2} + 10 t + 5)  (t^{2} + 22 t + 125) $ & $-843750  (t^{2} + 4 t - 1)  (t^{2} + 22 t + 125)^{2} $\\\hline

$6$ & $1$ & $-3  (t + 12)  (t^{3} + 252 t^{2} + 3888 t + 15552) $  & $-2  (t^{2} + 36 t + 216)  (t^{4} - 504 t^{3} - 13824 t^{2} - 124416 t - 373248) $\\\cmidrule{2-4}
& $2$ & $-48  (t + 6)  (t^{3} + 18 t^{2} + 324 t + 1944) $ & $-128  (t^{2} + 36 t + 216)  (t^{4} - 216 t^{2} - 1944 t - 5832) $ \\\cmidrule{2-4}
& $3$ & $-243  (t + 12)  (t^{3} + 12 t^{2} + 48 t + 192) $ & $ -1458  (t^{2} + 12 t + 24)  (t^{4} + 24 t^{3} + 192 t^{2} - 4608)$ \\\cmidrule{2-4}
& $4$ & $-3888  (t + 6)  (t^{3} + 18 t^{2} + 84 t + 24) $ & $-93312  (t^{2} + 12 t + 24)  (t^{4} + 24 t^{3} + 192 t^{2} + 504 t - 72) $ \\\hline

$7$ & $1$ & $-27    (t^{2} + 13 t + 49)  (t^{2} + 245 t + 2401) $ & $-54  (t^{2} + 13 t + 49)  (t^{4} - 490 t^{3} - 21609 t^{2} - 235298 t - 823543) $\\\cmidrule{2-4}
& $2$ & $-64827    (t^{2} + 5 t + 1)  (t^{2} + 13 t + 49) $ & $-6353046  (t^{2} + 13 t + 49)  (t^{4} + 14 t^{3} + 63 t^{2} + 70 t - 7) $ \\\hline

$8$ & $1$ & $-432  (t^{4} + 224 t^{2} + 256) $  & $ -3456  (t^{2} - 24 t + 16)  (t^{2} + 16)  (t^{2} + 24 t + 16)$\\\cmidrule{2-4}
& $2$ & $-432  (t^{4} - 240 t^{3} + 2144 t^{2} - 3840 t + 256) $ & $-3456  (t^{2} - 24 t + 16)  (t^{4} + 528 t^{3} - 4000 t^{2} + 8448 t + 256) $ \\\cmidrule{2-4}
& $3$ & $-27  (t^{4} + 240 t^{3} + 2144 t^{2} + 3840 t + 256) $ & $-54  (t^{2} + 24 t + 16)  (t^{4} - 528 t^{3} - 4000 t^{2} - 8448 t + 256) $  \\\cmidrule{2-4}
& $4$ & $-6912  (t^{4} - 16 t^{2} + 256) $ & $ -221184  (t^{2} - 32)  (t^{2} - 8)  (t^{2} + 16)$ \\\cmidrule{2-4}
& $5$ & $-6912  (t^{4} - 256 t^{2} + 4096) $ & $-221184  (t^{2} - 32)  (t^{4} + 512 t^{2} - 8192) $ \\\cmidrule{2-4}
& $6$ & $-110592  (t^{4} - 16 t^{2} + 16) $ & $-14155776  (t^{2} - 8)  (t^{4} - 16 t^{2} - 8) $ \\\hline

$9$ & $1$ & $-3  (t + 6)  (t^{3} + 234 t^{2} + 756 t + 2160) $ & $-2  (t^{6} - 504 t^{5} - 16632 t^{4} - 123012 t^{3} - 517104 t^{2} - 1143072 t - 1475496) $\\\cmidrule{2-4}
& $2$ & $ -243  t  (t + 6)  (t^{2} - 6 t + 36)$  & $-1458  (t^{2} - 6 t - 18)  (t^{4} + 6 t^{3} + 54 t^{2} - 108 t + 324) $ \\\cmidrule{2-4}
& $3$ & $-19683  t  (t^{3} - 24) $ & $ -1062882  (t^{6} - 36 t^{3} + 216)$  \\\hline

$10$ & $1$ & $-27  (t^{2} + 4)  (t^{6} + 236 t^{5} + 1440 t^{4} + 1920 t^{3} + 3840 t^{2} + 256 t + 256) $ & $-54  (t^{2} + 4 t + 8)  (t^{2} + 22 t - 4)  (t^{2} + 4)^{2}  (t^{4} - 536 t^{3} - 264 t^{2} - 416 t - 64) $ \\\cmidrule{2-4}
& $2$ & $-432  (t^{2} + 4)  (t^{6} - 4 t^{5} + 240 t^{4} - 480 t^{3} + 1440 t^{2} - 944 t + 16) $ & $-3456  (t^{2} - 2 t + 2)  (t^{2} + 22 t - 4)  (t^{2} + 4)^{2}  (t^{4} - 26 t^{3} + 66 t^{2} - 536 t - 4) $ \\\cmidrule{2-4}
& $3$ & $ -768  (t^{2} - 3)  (t^{6} - 9 t^{4} + 243 t^{2} - 243)$ & $-8192  (t^{4} + 18 t^{2} - 27)  (t^{8} - 36 t^{6} + 270 t^{4} - 972 t^{2} + 729) $ \\\cmidrule{2-4}
& $4$ & $-62208  (t^{2} - 3)  (t^{6} - 9 t^{4} + 3 t^{2} - 3) $ & $-5971968  (t^{4} - 6 t^{2} - 3)  (t^{8} - 12 t^{6} + 30 t^{4} - 36 t^{2} + 9) $\\\hline

$12$ & $1$ & $-48  (t^{2} + 3)  (t^{6} + 225 t^{4} - 405 t^{2} + 243) $& $-128  (t^{4} - 24 t^{3} + 18 t^{2} - 27)  (t^{4} + 18 t^{2} - 27)  (t^{4} + 24 t^{3} + 18 t^{2} - 27) $ \\\cmidrule{2-4}
& $2$ & $-3888  (t^{2} + 3)  (t^{6} - 15 t^{4} + 75 t^{2} + 3) $ & $-93312  (t^{4} - 6 t^{2} - 24 t - 3)  (t^{4} - 6 t^{2} - 3)  (t^{4} - 6 t^{2} + 24 t - 3) $\\\cmidrule{2-4}
& $3$ & $-768  (t^{2} - 3)  (t^{6} - 9 t^{4} + 243 t^{2} - 243) $  & $-8192  (t^{4} + 18 t^{2} - 27)  (t^{8} - 36 t^{6} + 270 t^{4} - 972 t^{2} + 729) $ \\\cmidrule{2-4}
& $4$ & $-62208  (t^{2} - 3)  (t^{6} - 9 t^{4} + 3 t^{2} - 3) $ & $-5971968  (t^{4} - 6 t^{2} - 3)  (t^{8} - 12 t^{6} + 30 t^{4} - 36 t^{2} + 9) $ \\\cmidrule{2-4}
& $5$ & $-3  (t^{2} + 6 t - 3)  (t^{6} + 234 t^{5} + 747 t^{4} + 540 t^{3} - 729 t^{2} - 486 t - 243) $ & $ -2  (t^{4} + 24 t^{3} + 18 t^{2} - 27)  (t^{8} - 528 t^{7} - 3996 t^{6} - 9504 t^{5} + 270 t^{4} + 14256 t^{3} - 972 t^{2} + 729)$\\\cmidrule{2-4}
& $6$ & $-243  (t^{2} + 6 t - 3)  (t^{6} - 6 t^{5} + 27 t^{4} + 60 t^{3} - 249 t^{2} + 234 t - 3) $ & $ -1458  (t^{4} - 6 t^{2} + 24 t - 3)  (t^{8} - 12 t^{6} - 528 t^{5} + 30 t^{4} + 3168 t^{3} - 3996 t^{2} + 1584 t + 9)$ \\\cmidrule{2-4}
& $7$ & $-48  (t^{2} - 6 t - 3)  (t^{6} - 234 t^{5} + 747 t^{4} - 540 t^{3} - 729 t^{2} + 486 t - 243) $ & $-128  (t^{4} - 24 t^{3} + 18 t^{2} - 27)  (t^{8} + 528 t^{7} - 3996 t^{6} + 9504 t^{5} + 270 t^{4} - 14256 t^{3} - 972 t^{2} + 729) $ \\\cmidrule{2-4}
& $8$ & $-3888  (t^{2} - 6 t - 3)  (t^{6} + 6 t^{5} + 27 t^{4} - 60 t^{3} - 249 t^{2} - 234 t - 3) $ & $-93312  (t^{4} - 6 t^{2} - 24 t - 3)  (t^{8} - 12 t^{6} + 528 t^{5} + 30 t^{4} - 3168 t^{3} - 3996 t^{2} - 1584 t + 9) $ \\\hline

$13$ & $1$ & $-27    (t^{2} + 5 t + 13)  (t^{2} + 6 t + 13)  (t^{4} + 247 t^{3} + 3380 t^{2} + 15379 t + 28561) $ & $-54  (t^{2} + 5 t + 13)  (t^{2} + 6 t + 13)^{2}  (t^{6} - 494 t^{5} - 20618 t^{4} - 237276 t^{3} - 1313806 t^{2} - 3712930 t - 4826809) $ \\\cmidrule{2-4}
& $2$ & $-771147    (t^{2} + 5 t + 13)  (t^{2} + 6 t + 13)  (t^{4} + 7 t^{3} + 20 t^{2} + 19 t + 1) $ & $-260647686  (t^{2} + 5 t + 13)  (t^{2} + 6 t + 13)^{2}  (t^{6} + 10 t^{5} + 46 t^{4} + 108 t^{3} + 122 t^{2} + 38 t - 1) $ \\\hline

$16$ & $1$ & $ -432  (t^{8} + 240 t^{6} + 2144 t^{4} + 3840 t^{2} + 256)$ & $ -3456  (t^{4} - 24 t^{3} + 24 t^{2} - 96 t + 16)  (t^{4} + 24 t^{2} + 16)  (t^{4} + 24 t^{3} + 24 t^{2} + 96 t + 16)$\\\cmidrule{2-4}
& $2$ & $-27  (t^{8} + 240 t^{7} + 2160 t^{6} + 6720 t^{5} + 17504 t^{4} + 26880 t^{3} + 34560 t^{2} + 15360 t + 256) $  & $-54  (t^{4} + 24 t^{3} + 24 t^{2} + 96 t + 16)  (t^{8} - 528 t^{7} - 3984 t^{6} - 14784 t^{5} - 31648 t^{4} - 59136 t^{3} - 63744 t^{2} - 33792 t + 256) $ \\\cmidrule{2-4}
& $3$ & $-432  (t^{8} - 240 t^{7} + 2160 t^{6} - 6720 t^{5} + 17504 t^{4} - 26880 t^{3} + 34560 t^{2} - 15360 t + 256) $ & $-3456  (t^{4} - 24 t^{3} + 24 t^{2} - 96 t + 16)  (t^{8} + 528 t^{7} - 3984 t^{6} + 14784 t^{5} - 31648 t^{4} + 59136 t^{3} - 63744 t^{2} + 33792 t + 256) $ \\\cmidrule{2-4}
& $4$ & $-6912  (t^{4} - 4 t^{3} + 8 t^{2} + 16 t + 16)  (t^{4} + 4 t^{3} + 8 t^{2} - 16 t + 16) $ & $-221184  (t^{2} - 4 t - 4)  (t^{2} + 4 t - 4)  (t^{4} + 16)  (t^{4} + 24 t^{2} + 16) $ \\\cmidrule{2-4}
& $5$ & $-6912  (t^{4} - 16 t^{3} + 8 t^{2} + 64 t + 16)  (t^{4} + 16 t^{3} + 8 t^{2} - 64 t + 16) $ & $-221184  (t^{2} - 4 t - 4)  (t^{2} + 4 t - 4)  (t^{8} + 528 t^{6} - 4000 t^{4} + 8448 t^{2} + 256) $ \\\cmidrule{2-4}
& $6$ & $-110592  (t^{8} - 16 t^{4} + 256) $ & $-14155776  (t^{4} - 32)  (t^{4} - 8)  (t^{4} + 16) $\\\cmidrule{2-4}
& $7$ & $-110592  (t^{8} - 256 t^{4} + 4096) $  & $ -14155776  (t^{4} - 32)  (t^{8} + 512 t^{4} - 8192)$ \\\cmidrule{2-4}
& $8$ & $-1769472  (t^{8} - 16 t^{4} + 16) $  & $-905969664  (t^{4} - 8)  (t^{8} - 16 t^{4} - 8) $ \\\hline

$18$ & $1$ & $-3  (t^{3} + 6 t^{2} + 4)  (t^{9} + 234 t^{8} + 756 t^{7} + 2172 t^{6} + 1872 t^{5} + 3024 t^{4} + 48 t^{3} + 3744 t^{2} + 64) $ & $-2  (t^{6} + 24 t^{5} + 24 t^{4} + 92 t^{3} - 48 t^{2} + 96 t - 8)  (t^{12} - 528 t^{11} - 3984 t^{10} - 14792 t^{9} - 27936 t^{8} - 42624 t^{7} - 37632 t^{6} - 52992 t^{5} - 25344 t^{4} - 43520 t^{3} - 6144 t^{2} - 6144 t - 512) $ \\\cmidrule{2-4}
& $2$ & $-48  (t^{3} + 6 t - 2)  (t^{9} + 234 t^{7} - 6 t^{6} + 756 t^{5} - 936 t^{4} + 2172 t^{3} - 1512 t^{2} + 936 t - 8) $ & $-128  (t^{6} + 24 t^{5} + 24 t^{4} + 92 t^{3} - 48 t^{2} + 96 t - 8)  (t^{12} - 24 t^{11} + 48 t^{10} - 680 t^{9} + 792 t^{8} - 3312 t^{7} + 4704 t^{6} - 10656 t^{5} + 13968 t^{4} - 14792 t^{3} + 7968 t^{2} - 2112 t - 8) $ \\\cmidrule{2-4}
& $3$ & $-243  (t^{3} + 4)  (t^{3} + 6 t^{2} + 4)  (t^{6} - 6 t^{5} + 36 t^{4} + 8 t^{3} - 24 t^{2} + 16) $ & $-1458  (t^{2} + 2 t - 2)  (t^{4} - 8 t^{3} - 8 t - 8)  (t^{4} - 2 t^{3} + 6 t^{2} + 4 t + 4)  (t^{8} + 8 t^{7} + 64 t^{6} - 16 t^{5} - 56 t^{4} + 128 t^{3} + 64 t^{2} - 64 t + 64) $  \\\cmidrule{2-4}
& $4$ & $-3888  (t^{3} - 2)  (t^{3} + 6 t - 2)  (t^{6} - 6 t^{4} - 4 t^{3} + 36 t^{2} + 12 t + 4) $ & $-93312  (t^{2} + 2 t - 2)  (t^{4} - 2 t^{3} - 8 t - 2)  (t^{4} - 2 t^{3} + 6 t^{2} + 4 t + 4)  (t^{8} + 2 t^{7} + 4 t^{6} - 16 t^{5} - 14 t^{4} + 8 t^{3} + 64 t^{2} - 16 t + 4) $ \\\cmidrule{2-4}
& $5$ & $-19683  (t^{3} + 4)  (t^{9} - 12 t^{6} + 48 t^{3} + 64) $  & $-1062882  (t^{6} - 4 t^{3} - 8)  (t^{12} - 8 t^{9} - 512 t^{3} - 512) $ \\\cmidrule{2-4}
& $6$ & $-314928  (t^{3} - 2)  (t^{9} - 6 t^{6} - 12 t^{3} - 8) $ & $-68024448  (t^{6} - 4 t^{3} - 8)  (t^{12} - 8 t^{9} - 8 t^{3} - 8) $\\\hline

$25$ & $1$ & $-27  (t^{2} + 4)  (t^{10} + 240 t^{9} + 2170 t^{8} + 8880 t^{7} + 34835 t^{6} + 83748 t^{5} + 206210 t^{4} + 313380 t^{3} + 503545 t^{2} + 424740 t + 375376) $ & $-54  (t^{2} + 4)^{2}  (t^{4} + 6 t^{3} + 21 t^{2} + 36 t + 61)  (t^{10} - 510 t^{9} - 13580 t^{8} - 36870 t^{7} - 190915 t^{6} - 393252 t^{5} - 1068040 t^{4} - 1508370 t^{3} - 2581955 t^{2} - 2087010 t - 1885124) $ \\\cmidrule{2-4}
& $2$ & $-16875  (t^{2} + 4)  (t^{2} + 3 t + 1)  (t^{4} - 4 t^{3} + 11 t^{2} - 14 t + 31)  (t^{4} + t^{3} + 11 t^{2} - 4 t + 16) $ & $-843750  (t^{2} - 2 t - 4)  (t^{2} + 4)^{2}  (t^{4} - 4 t^{3} + 21 t^{2} - 34 t + 41)  (t^{4} + 3 t^{2} + 1)  (t^{4} + 6 t^{3} + 21 t^{2} + 36 t + 61) $ \\\cmidrule{2-4}
& $3$ &  $-10546875  (t^{2} + 4)  (t^{10} + 10 t^{8} + 35 t^{6} - 12 t^{5} + 50 t^{4} - 60 t^{3} + 25 t^{2} - 60 t + 16) $ & $ -13183593750  (t^{2} + 4)^{2}  (t^{4} + 3 t^{2} + 1)  (t^{10} + 10 t^{8} + 35 t^{6} - 18 t^{5} + 50 t^{4} - 90 t^{3} + 25 t^{2} - 90 t + 76)$
\label{ta:curves}	
\end{longtable}}

{\renewcommand*{\arraystretch}{1.2} \begin{longtable}{cC{1.3in}C{1.5in}C{1.1in}C{1.1in}}
\caption{The elliptic curve $\mathcal{C}_{n,i}(d):y^2=x^3+d^2\mathcal{A}_{n,i}x+d^3\mathcal{B}_{n,i}$}\\
\hline
$n$ &  $\mathcal{A}_{n,1}$ & $\mathcal{B}_{n,1}$ & $\mathcal{A}_{n,2}$ & $\mathcal{B}_{n,2}$ \\
\hline
\endfirsthead
\caption[]{\emph{continued}}\\
\hline
\endhead
\hline
\multicolumn{4}{r}{\emph{continued on next page}}
\endfoot
\hline
\endlastfoot

$11$ & $-1149984 $ & $-487018224 $ & $-9504 $ & $365904$\\\hline

$14$ & $-2361555 $ & $1396762542 $ & $-138915 $ & $24504606 $ \\\hline

$15$ & $-162675 $ & $-25254450 $ & $-675 $  & $-79650 $  \\\hline

$17$  & $-247394115 $  & $-1679010134850 $ & $-3940515 $  & $3010787550 $ \\\hline

$19$ & $-219488 $  & $-39617584  $ & $-608 $  & $5776 $ \\\hline

$21$ & $-1396035$ & $634881726 $ &$-1104435 $ & $907504398 $  \\\hline

$27$ & $-4320 $  & $-109296 $ & $0$ & $-432 $  \\\hline

$37$ & $-269675595 $ & $-1704553285050 $ & $-10395 $ & $444150 $ \\\hline

$43$ & $-25442240 $& $-49394836848 $ & $-13760 $ & $621264 $ \\\hline

$67$ & $-529342880 $ & $-4687634371504 $ & $-117920 $ & $15585808 $ \\\hline

$163$ & $-924354639680 $  & $-342062961763303088 $ & $-34790720 $  & $78984748304 $
\\\hline \hline

$n$ &  $\mathcal{A}_{n,3}$ & $\mathcal{B}_{n,3}$ & $\mathcal{A}_{n,4}$ & $\mathcal{B}_{n,4}$\\\hline
 
 $11$ & $-395307$  & $373960422$  & $-38907$  & $-2953962$ \\\hline
 $14$ & $-48195 $  & $-4072194 $  & $-2835 $ & $-71442 $  \\\hline
 $15$ & $712125 $  & $-104861250 $ &  $-97875 $  & $14208750 $ \\\hline
 $21$ & $3645 $ & $-13122 $ & $-54675 $  & $-5156946 $\\\hline
 $27$ & $0$ & $16 $ & $-480 $ & $4048 $
\label{ta:curves1}	
\end{longtable}}

\end{appendix}
\newpage
\bibliographystyle{plain}
\bibliography{bibliography}

\end{document}